\title[Lefschetz fibrations]{Lefschetz fibrations on cotangent bundles and some plumbings}
\author{Sangjin Lee}
\address[Sangjin Lee]{Korea Institute for Advanced Study, 85 Hoegiro Dongdaemun-gu, Seoul 02455, Republic of Korea}
\email{sangjinlee@kias.re.kr (primary), sangjinlee@ibs.re.kr}
\newtheorem{theorem}{Theorem}
\newtheorem{corollary}[theorem]{Corollary}
\newtheorem{lemma}[theorem]{Lemma}
\theoremstyle{definition}
\newtheorem{definition}[theorem]{Definition}
\theoremstyle{definition}
\theoremstyle{definition}
\newtheorem{exmp}[theorem]{Example}
\theoremstyle{definition}
\newtheorem{remark}[theorem]{Remark}
\newtheorem*{remark*}{Remark}
\newtheorem{proposition}[theorem]{Proposition}
\newcommand{\ind}{\operatorname{ind}}
\newcommand{\hs}{\hspace{0.2em}}
\setlist{noitemsep}
\begin{document}
\maketitle 

\noindent{\bf Abstract}
We construct Lefschetz fibrations of a Weinstein manifold from its Weinstein structure when the Weinstein manifold is one of the following three types: cotangent bundle, plumbing of two cotangent bundles, or plumbing of multiple copies of $T^*S^n$ whose plumbing pattern is a tree.

\noindent{\bf Keywords} 
Lefschetz fibration, Weinstein handle decomposition.

\noindent{\bf Mathematics Subject Classification} 
53D05(Symplectic manifolds, general), 53D35(Global theory of symplectic and contact manifolds).

\section{Introduction}
\label{section introduction}

\subsection{Introduction}
\label{subsect motivation}
{\em Lefschetz fibrations} are powerful tools in studying symplectic topology.  
The following are a few examples of that:
When a Lefschetz fibration is given, one can define the Fukaya-Seidel category as described in \cite{Seidel1}.
In \cite{Maydanskiy, MS, Abouzaid-Seidel}, the authors used Lefschetz fibrations for constructing diffeomorphic pairs of different Weinstein manifolds. 
When Wu \cite{Wu} studied the symplectic mapping class group of the Milnor fiber of $A_n$-type, the well-known Lefschetz fibration of the Milnor fiber played a key role. 
McLean \cite{Mclean, McLean12} showed that one could compute a symplectic homology of a Liouville manifold from a Lefschetz fibration and its monodromy map. 

It is natural to ask which symplectic manifolds admit Lefschetz fibrations. 
Giroux and Pardon \cite{GP} gave a wonderful answer.
They proved that every Stein manifold should admit a Lefschetz fibration. 
Moreover, \cite{GP} proved that every Weinstein manifold should admit a Lefschetz fibration {\em indirectly}, based on the equivalence between Stein and Weinstein manifolds. 

In the present paper, we construct Lefschetz fibrations on some Weinstein manifolds directly from their Weinstein structures. 
More specific results will appear in Section \ref{subsect results}.

\subsection{Results}
\label{subsect results}
The main results of this paper are to construct a Lefschetz fibration of $W$ if $W$ satisfies one of the following three cases:
\begin{itemize}
	\item $W$ is a cotangent bundle of a smooth manifold $M$, i.e., $W = T^*M$, 
	\item $W$ is a plumbing of two cotangent bundles $T^*M_1$ and $T^*M_2$ at one plumbing points, or 
	\item $W$ is a plumbing of multiple copies of $T^*S^n$ such that the plumbing pattern is a tree $T$. 
\end{itemize}

More precisely, we prove Theorems \ref{thm rough statement}--\ref{thm rough statement3}.

\begin{theorem}[Technical statement is Theorem \ref{thm main}]
	\label{thm rough statement}
	Let $M$ be a smooth manifold.
	We give an algorithm producing a Lefschetz fibration on $T^*M$ from a handle decomposition of $M$.  
\end{theorem}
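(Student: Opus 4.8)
The plan is to interpret the desired Lefschetz fibration as a ``complex Morse function'' attached to the handle decomposition, and to build it by converting the handles one at a time. To begin, I would fix a self-indexing Morse function $f\colon M\to\mathbb{R}$ realizing the given handle decomposition, so that the critical points of $f$ correspond bijectively to the handles, an index-$k$ critical point to a $k$-handle. As recalled just before the statement, perturbing the canonical Liouville form $p\,dq$ on $T^*M$ together with $f$ yields a Weinstein structure whose critical points lie on the zero section exactly at the critical points of $f$; since the fibre coordinate contributes a term $|p|^2$ that is a nondegenerate minimum transverse to the zero section, a $k$-handle of $M$ produces a Weinstein $k$-handle of $T^*M$. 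This is the Weinstein handle decomposition of $T^*M$ that the algorithm takes as input.

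The geometric model guiding the construction is the following. If $M$ were real-analytic and $f$ extended to a holomorphic function $f_{\mathbb{C}}$ on a Grauert-tube identification of $T^*M$ with a neighbourhood of $M$ in its complexification, then every nondegenerate real critical point of $f$ would complexify to a nondegenerate \emph{complex} critical point of $f_{\mathbb{C}}$, i.e.\ a Lefschetz singularity, \emph{regardless of its Morse index}, and the regular fibre would be $T^*(f^{-1}(c))$, the cotangent bundle of a regular level set. The standard example $T^*S^n=\{z_0^2+\dots+z_n^2=1\}$ with $\pi=z_n$ illustrates this: the height function on $S^n$ has two critical points, of indices $0$ and $n$, and $\pi$ has exactly two Lefschetz critical points with fibre $T^*S^{n-1}=T^*(f^{-1}(c))$. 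The task is to realize this picture combinatorially for arbitrary smooth $M$, directly from the Weinstein handle decomposition, so that analyticity is never needed.

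Accordingly I would run an induction over the handles ordered by index. The base case is the $0$-handle: $T^*D^n\cong\mathbb{C}^n$ carries the fibration $z_1^2+\dots+z_n^2$ with one critical point and fibre $T^*S^{n-1}$. For the inductive step I would show that attaching the Weinstein $k$-handle of $T^*M$ corresponding to a $k$-handle of $M$ performs a controlled modification of the fibration already constructed: a subcritical handle ($k<n$) is absorbed into the \emph{fibre} --- its attaching sphere is isotoped into a regular fibre, where it becomes the attaching sphere of an index-$k$ handle of that fibre, dropping the dimension of the level set by one and producing the recursive appearance of $T^*(f^{-1}(c))$ --- while a critical handle ($k=n$) adds one new Lefschetz critical point whose vanishing cycle is the Lagrangian sphere underlying the attaching Legendrian of the handle. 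Reassembling the pieces, I would then check that the critical points are nondegenerate of Lefschetz type, that the regular fibres are Weinstein, and that the resulting total space is symplectomorphic to $T^*M$ rather than merely homotopy equivalent to it; making all of this precise is the content of Theorem \ref{thm main}.

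The main obstacle I anticipate is the inductive step for the intermediate handles $0<k<n$. One must control how such a handle acts on the already-built fibration so that it genuinely modifies the fibre in a manner compatible with the recursive cotangent-bundle structure, while simultaneously keeping the attaching data of the higher, not-yet-processed handles isotopic to spheres lying in a single smooth fibre, so that they still produce the correct vanishing cycles after the fibre has changed. Matching the Liouville and Weinstein structures across these surgeries, and verifying that the recursion closes up so that the reassembled total space is exactly $T^*M$, is the technical heart of the argument and the reason the precise statement is deferred to Theorem \ref{thm main}.
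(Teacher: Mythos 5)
Your overall architecture (induct over the handles; base case on the $0$-handle; subcritical handles modify the fibre; index-$n$ handles contribute vanishing cycles) is the same as the paper's, but there is a genuine gap at exactly the step you defer, and it is not merely a postponed technicality: your inductive step contradicts your own guiding model. In the complexification picture every Morse critical point, \emph{of every index}, produces a Lefschetz critical point --- you say this explicitly --- yet your induction creates critical values only for the $0$-handle and the $n$-handles, absorbing each intermediate handle into the fibre with no new critical value. For $M = T^2$ the model (and Johns' construction, which the paper's algorithm recovers, see Remark \ref{rmk joe johns}) gives four critical values, while your scheme gives two. The paper's Theorem \ref{thm main} produces one critical value per handle of $D$, and this is forced by its method: Step 1 of the algorithm first subdivides each subcritical handle $h_i$ into $h_i^{ori}$ together with a canceling pair $(h_i^{n-1}, h_i^{n})$; the fibre then absorbs \emph{two} handles (of indices $k$ and $n-1$, not one of index $k$), and the index-$n$ piece $h_i^n$ becomes an additional Lefschetz critical point. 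This carving is what makes the subcritical part an honest product $F_i \times \mathbb{D}^2$ and leaves room for the later vanishing cycles; the paper's Section \ref{subsect example : the case of T^*S^n} and Remark \ref{rmk reason of adding handles} explain that without it the Weinstein handle decomposition need not admit a Lefschetz fibration at all. Your base case silently performs this carving for the $0$-handle --- the fibration $z_1^2 + \cdots + z_n^2$ on $\mathbb{C}^n = T^*D^n$ has fibre $T^*S^{n-1}$, which is a ball with an $(n-1)$-handle attached, and one critical point, namely the carved disk --- but your inductive step omits the analogous correction for $0 < k < n$, and nothing in your plan shows that the absorbed space is still $T^*M_i$, nor that the attaching data of the unprocessed handles lands on exact Lagrangian spheres in the smaller fibre you propose.

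Two further claims in your setup are incorrect as stated. First, the recursive fibre description $T^*(f^{-1}(c))$ cannot hold: level sets change diffeomorphism type as $c$ crosses critical values, whereas a Lefschetz fibration has a single fibre; in the paper the fibre is $T^*S^{n-1}$ with two Weinstein handles attached per intermediate handle of $M$, which is in general not a cotangent bundle (for $T^2$ it is a surface of Euler characteristic $-4$, while the cotangent bundle of any level set of a Morse function on $T^2$ is a disjoint union of annuli). Second, placing the attaching sphere ``in a single regular fibre'' is not the right position: a handle attached over one point (or a small arc) of $\partial\mathbb{D}^2$ does not extend the fibration with an enlarged constant fibre; the paper must isotope the attaching Legendrian onto the \emph{horizontal} boundary so that it spreads over a whole diameter of the base (the subcritical handle condition of Section \ref{subsect a Weinstein handle decomposition admitting}, implemented by the Legendrian modifications of Section \ref{section modification of Legendrians}). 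So the proposal is not a correct alternative route; it is the paper's route with its key idea --- the canceling-pair subdivision and the resulting extra critical points --- missing, and that idea is precisely what closes the gap you identify as the technical heart.
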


\begin{theorem}[Technical statement is Theorem \ref{thm plumbing}]
	\label{thm rough statement2}
	Let $M_1$ and $M_2$ be smooth manifolds of the same dimension. 
	There is an algorithm producing a Lefschetz fibration on the plumbing of $T^*M_1$ and $T^*M_2$ at one point from a pair of handle decompositions of $M_1$ and $M_2$.  
\end{theorem}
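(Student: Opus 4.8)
The plan is to reduce the plumbing case to the cotangent bundle case already settled by Theorem \ref{thm rough statement}, by repackaging the plumbing as a single Weinstein handle decomposition and then feeding it into the very same algorithm. Write $n = \dim M_1 = \dim M_2$ and let $P$ denote the plumbing of $T^*M_1$ and $T^*M_2$ at one point. Since plumbing is a purely local operation, I would first normalize the plumbing point: after arranging (via a handle slide, or simply by choosing the base point) that each $M_i$ is connected with a single $0$-handle, I may take the plumbing point $p_i \in M_i$ to lie in the interior of that $0$-handle, so that near $p_i$ the pair $(T^*M_i, \lambda_i)$ is the standard model $(T^*\mathbb{R}^n, p\,dq)$. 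The plumbing is then the identity away from these two balls, and across them it is the symplectic flip $(q,p)\mapsto(p,-q)$ exchanging the base and fiber directions; equivalently, it glues the $0$-handle of $T^*M_2$ onto the cotangent fiber over $p_1$ inside $T^*M_1$, so that the zero sections $M_1$ and $M_2$ meet transversally at the plumbing point.

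The key geometric step is to produce, from the two Weinstein handle decompositions supplied by Theorem \ref{thm rough statement}, a Weinstein handle decomposition of $P$. I would argue that $P$ is obtained by merging the two $0$-handles through the local plumbing model and then attaching all remaining Weinstein handles of $T^*M_1$ and of $T^*M_2$ exactly as before. Concretely, the two $0$-handle balls are replaced by the Weinstein domain obtained by plumbing two copies of $T^*\mathbb{R}^n$, whose two zero sections cross transversally at the plumbing point, while the attaching Legendrians of the higher-index handles of $M_1$ and $M_2$ are untouched and are merely repositioned in the boundary of the enlarged subcritical piece. Verifying that this produces an honest Weinstein handle decomposition — in particular that the flip does not spoil the isotropic/Legendrian character of the attaching spheres, and that the local model contributes in a controlled way (a standard subcritical piece together with the single critical datum recording the transverse neck) — is where most of the technical work lies.

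With the combined decomposition in hand, I would run the algorithm of Theorem \ref{thm main} on it essentially verbatim to output the Lefschetz fibration on $P$: one builds the fibration over a disk $D^2$ containing the two sub-fibrations coming from $T^*M_1$ and $T^*M_2$, with regular fiber obtained by gluing the fibers $F_1$ and $F_2$ of the two pieces along a common sub-domain near the neck, and with ordered vanishing cycles the union of the two vanishing-cycle collections, included into the combined fiber, together with the cycle produced by the plumbing neck. Because this step only re-applies the already-established machinery, the output is again a genuine Lefschetz fibration and the assignment is algorithmic, which is exactly what Theorem \ref{thm plumbing} asserts.

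The hard part will be the local plumbing model and the gluing in the second step. Since the flip $(q,p)\mapsto(p,-q)$ interchanges base and fiber directions, I must check that the two sub-fibrations can be arranged to agree along the shared neck — that $F_1$ and $F_2$ admit a common sub-domain into which the neck fiber embeds compatibly — and that the Liouville data match across the gluing after the flip. A secondary subtlety is the bookkeeping of the ordering of the combined vanishing cycles, so that the resulting monodromy reproduces $P$ and not some other total space; I expect this to force the construction to be well-defined only up to the evident handle moves and Hurwitz moves on the vanishing cycles, and I would phrase the final statement accordingly.
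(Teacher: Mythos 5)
Your overall picture---a combined fiber obtained by gluing $F_1$ and $F_2$ along a common sub-domain, a merged list of vanishing cycles, and a single critical datum recording the plumbing neck---matches the shape of the fibration the paper ends up with, but the mechanism you propose for producing it has a genuine gap. Theorem \ref{thm main} is an algorithm whose input is a handle decomposition of a smooth \emph{manifold} $M$; its inductive proof (maintaining the product structure $F_i \times \mathbb{D}^2$ and modifying the Legendrians sitting over $\partial M_i$) uses throughout that the skeleton being built is the manifold $M$ itself. The skeleton of the plumbing $P$ is not a manifold, and your combined Weinstein handle decomposition has two systems of handles attached along two Legendrian spheres that are linked in the boundary of the shared $0$-handle, so the algorithm cannot be run ``essentially verbatim'' on this input: redoing the induction in that setting is exactly the content of the theorem you are trying to prove, and your reduction is circular at this point. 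The paper avoids this entirely by combining the \emph{outputs} rather than the inputs: it applies Theorem \ref{thm main} separately to $D_1$ and $D_2$, obtaining abstract fibrations $(F_1; X_{m_1},\ldots,X_0)$ and $(F_2; Y_{m_2},\ldots,Y_0)$ in which each $F_i$ is built by attaching handles to a common $D^*S^{n-1}$ and each last cycle $X_0$, $Y_0$ is its zero section; it then attaches the $F_1$-handles over $D^*S_+$ and the $F_2$-handles over $D^*S_-$, so that $F_1 \cup F_2 = F$ and $F_1 \cap F_2 = D^*S^{n-1}$, and takes the fibration $(F; X_{m_1},\ldots,X_1,\, Y_{m_2},\ldots,Y_1,\, X_0 = Y_0)$.

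This also exposes the second problem: your vanishing-cycle count is wrong, or at best ambiguous. You describe the cycles as ``the union of the two vanishing-cycle collections \ldots together with the cycle produced by the plumbing neck.'' There is no extra neck cycle: the plumbing is created precisely by \emph{identifying} the two $0$-handle cycles $X_0$ and $Y_0$ into one shared cycle appearing exactly once, at the end of the list. The two thimbles emanating from that single critical point into the two halves of the base are the two transverse Lagrangian disks of the plumbing; keeping $X_0$, $Y_0$ and adding a third cycle changes the total space. Finally, your verification step (``check the Liouville data match after the flip,'' well-definedness only up to Hurwitz moves) has no working mechanism behind it. The paper's verification is the Lagrangian-skeleton argument of Section \ref{subsect the zero sections}: the sub-fibrations over two sub-disks of the base have total spaces $W_i$ equivalent to $T^*M_i$, their intersection is a ball fibered with the single shared critical value, the union carries the whole skeleton, and inside the shared ball the two skeletons are transverse disks---which is exactly the plumbing. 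The construction is definite; no hedge up to moves is needed.
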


\begin{theorem}[Technical statement is Theorem \ref{thm plumbing along tree}]
	\label{thm rough statement3}
	Let $P$ be a Weinstein manifold obtained by plumbing $T^*S^n$ along a tree $T$.
	Then, we give an algorithm producing a Lefschetz fibration defined on $P$. 
\end{theorem}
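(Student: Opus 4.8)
The plan is to reduce the construction to the two pieces already in hand: the Lefschetz fibration on a single $T^*S^n$ produced by Theorem \ref{thm rough statement} (technically Theorem \ref{thm main}) and the one-point plumbing operation of Theorem \ref{thm rough statement2} (technically Theorem \ref{thm plumbing}), and then to iterate the latter along the tree $T$. First I would fix the minimal handle decomposition of $S^n$ (one $0$-handle and one $n$-handle) and feed it into Theorem \ref{thm rough statement} to obtain a standard Lefschetz fibration $\pi_0 \colon T^*S^n \to \mathbb{C}$ whose regular fiber is $T^*S^{n-1}$ and whose two vanishing cycles are both the zero section $S^{n-1} \subset T^*S^{n-1}$; the zero section $S^n \subset T^*S^n$ then appears as the matching cycle of these two critical points. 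This model, with fiber $T^*S^{n-1}$, will serve as the common fiber of the fibration I build on all of $P$.

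Next I would exploit the recursive structure of a tree. Choosing a root for $T$ and ordering its vertices as $v_1,\dots,v_N$ so that each $v_i$ with $i \ge 2$ is adjacent to exactly one earlier vertex $v_{j(i)}$ (its parent), the plumbing $P$ is obtained from the single $T^*S^n$ attached to $v_1$ by successively plumbing a new copy of $T^*S^n$ onto the existing Lagrangian sphere $S_{v_{j(i)}}$. Since a Weinstein neighbourhood of any Lagrangian sphere $S_{v_{j(i)}}$ is itself standard (isomorphic to $T^*S^n$), each such step is, locally, exactly the one-point plumbing of two cotangent bundles governed by Theorem \ref{thm rough statement2}. I would therefore run an induction whose hypothesis records not only a Lefschetz fibration on the partial plumbing $P_{i-1}$ with fiber $T^*S^{n-1}$, but also the data of which matching path in the base realizes each sphere $S_{v_\ell}$ with $\ell < i$; the inductive step adds a new pair of critical points with vanishing cycle the zero section, whose matching cycle is $S_{v_i}$, positioned so that it meets $S_{v_{j(i)}}$ once and is disjoint from every other $S_{v_\ell}$.

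The main obstacle is precisely this last positioning requirement at high-valence vertices, and I expect it to be where the real work lies. If one insists that every matching cycle be a path joining two critical values, with intersections arising only from shared endpoints, then the intersection graph of the matching cycles is forced to be a line graph, and the only trees realizable this way are the $A$-type chains; a star, such as the central vertex of a $D_k$ diagram, cannot be produced by shared endpoints alone. I would resolve this by allowing the matching paths to cross transversally in the base: a single transverse crossing of two paths whose vanishing cycles are arranged to meet once in the fiber $T^*S^{n-1}$ contributes exactly one intersection point between the corresponding spheres. Realizing an arbitrary tree then becomes the combinatorial task of drawing its matching paths in the disc with the prescribed pattern of shared endpoints and crossings, which is always possible because $T$ is planar; the delicate point to verify is that parallel transport around the already-placed critical points neither creates nor destroys intersection points, which I would control by keeping each new vanishing cycle a Dehn-twisted copy of the zero section in the appropriate chamber and counting with the standard matching-cycle intersection formula.

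Finally I would package the recursion as the required algorithm: input the tree $T$, output the base diagram of critical values together with the list of vanishing cycles in $T^*S^{n-1}$, and then confirm that the total space of the resulting Lefschetz fibration is Weinstein-deformation-equivalent to $P$ by matching its handle decomposition (one subcritical $0$-handle together with $N$ critical $n$-handles whose linking is dictated by $T$) against the standard Weinstein handle decomposition of the plumbing. This last consistency check should be bookkeeping once the intersection pattern of the matching cycles has been shown to reproduce $T$.
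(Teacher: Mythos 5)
Your inductive skeleton---root the tree, order the vertices so that each new vertex is adjacent to exactly one earlier vertex, and build $P$ by plumbing on one $T^*S^n$ at a time, re-running the two-factor plumbing argument of Theorem \ref{thm plumbing} at each step---is exactly the strategy the paper sketches in Section \ref{section sketch of the proof of theorem} and carries out in Section \ref{section proof of plumbing tree theorem}. The genuine gap is in your resolution of the obstacle that you yourself correctly identify at vertices of valence $\geq 3$. You propose to keep the regular fiber equal to $T^*S^{n-1}$ throughout and to realize the extra adjacencies by transverse crossings of matching paths in the base, with ``vanishing cycles arranged to meet once in the fiber $T^*S^{n-1}$.'' No such arrangement exists. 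Vanishing cycles are (exact) Lagrangian $(n-1)$-spheres in $T^*S^{n-1}$, and the $\mathbb{Z}/2$-intersection pairing on $H_{n-1}(T^*S^{n-1};\mathbb{Z}/2)\cong\mathbb{Z}/2$ vanishes identically: the group is generated by the zero section $Z$, and $[Z]\cdot[Z]=\chi(S^{n-1}) \equiv 0 \pmod 2$. Hence any two closed $(n-1)$-dimensional submanifolds of $T^*S^{n-1}$ have even mod-$2$ intersection number, while a single transverse intersection point would make it $1$. (Floer theory gives the sharper statement $\operatorname{rank} HF(L_1,L_2)=2$ for exact Lagrangians, so at least two intersection points.) So your mechanism fails already at the central vertex of $D_4$, and since---as you note---shared endpoints alone can only produce $A$-type chains, the construction as written cannot proceed past the first branching vertex.

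The idea that is missing, and which is how the paper gets around exactly this point, is that \emph{the fiber must be allowed to grow}. In Section \ref{section the algorithm for the plumbings along trees} the regular fiber is not $T^*S^{n-1}$ but the plumbing $P_{n-1}(\overline{T})$ of copies of $T^*S^{n-1}$ along the contracted tree $\overline{T}$ of Definition \ref{def subtress}; there, distinct zero sections genuinely do meet in exactly one point (the plumbing point), which is what makes a single transverse crossing of matching paths produce a single intersection of the corresponding spheres. Accordingly, the induction in Section \ref{section proof of plumbing tree theorem} has two cases: when the new edge is the ``first'' edge at its tail, the fiber is unchanged and one repeated vanishing cycle is appended at the end (your shared-endpoint, $A$-type mechanism); when it is not, the fiber is enlarged by a stabilization (in the sense of Section \ref{subsect four moves})---plumbing one more $T^*S^{n-1}$ onto it---and \emph{two} copies of the new zero section are inserted as vanishing cycles at prescribed positions in the ordered collection. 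Your proposal contains no operation that changes the fiber, so the branching case cannot be repaired within it; with the stabilization idea added, it essentially becomes the paper's proof of Theorem \ref{thm plumbing along tree}.
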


\begin{remark}
	We note that \cite{Johns} proved Theorem \ref{thm rough statement} for the case of any closed surface $M$. 
	One can see that \cite{Johns} and Theorem \ref{thm rough statement} give the same Lefschetz fibration for the surface case. 
	For more detail, see Section \ref{subsection the case of cotangetn bundles of surfaces}.
\end{remark}

Before going further, we briefly explain the ideas of Theorems \ref{thm rough statement}--\ref{thm rough statement3}.

\subsubsection{The idea for Theorem \ref{thm rough statement}}
\label{subsubsection idea of the first theorem}
First, we see what a Lefschetz fibration can say about the Weinstein structure on its domain. 

Let $W$ be a Weinstein manifold equipped with a Lefschetz fibration $\pi: W \to \mathbb{C}$. 
Let $F$ be the regular fiber of $\pi$. 
It is well-known that $\pi$ gives a decomposition of $W$ into two parts, one is the subcritical part given by $F \times \mathbb{C}$, and the other is a union of critical Weinstein handles that are attached to the subcritical part. 
We note that to attach critical Weinstein handles, one needs Legendrian attaching spheres on $\partial_\infty (F \times \mathbb{C})$. 
One can have specific attaching spheres from $\pi$, or more precisely, from the (cyclically ordered) collection of vanishing cycles. 
For more detail, we refer the reader to \cite[Section 8]{BEE} or Section \ref{subsect Lefschetz fibration}.

Moreover, if one has a Weinstein handle decomposition of the regular fiber $F$, then it gives a Weinstein handle decomposition of $W$. 
This is because a Weinstein handle decomposition of $F$ induces a Weinstein handle decomposition of the subcritical part $F \times \mathbb{C}$. 

From the above arguments, one can expect the other direction, i.e., producing a Lefschetz fibration on a Weinstein manifold $W$ from a Weinstein handle decomposition of $W$. 
In this paper, we investigate the idea. 

\begin{remark}
	\label{rmk 1}
In this paper, we consider cotangent bundles or some plumbings.
One of the main reasons why we focus on them is that it is easy to obtain their Weinstein handle decomposition. 
For example, if $W = T^*M$ where $M$ is a smooth $n$-dimensional manifold, then Lemma \ref{lem handle decomposition of cotangent bundle} gives an algorithm producing a Weinstein handle decomposition $\mathcal{H}_\mathcal{D}$ of $W$ from a handle decomposition $\mathcal{D}$ of $M$. 
The idea of Lemma \ref{lem handle decomposition of cotangent bundle} is to thicken an $n$-dimensional index $i$-handle in $\mathcal{D}$ in order to construct a $2n$-dimensional index $i$ Weinstein handle in $\mathcal{H}_\mathcal{D}$. 
Similarly, we can construct a Weinstein handle decomposition of a plumbing space $P$ from Weinstein handle decompositions of cotangent bundles if $P$ consisting of the cotangent bundles. 
Then, for more details, see Section \ref{section Lefschetz fibrations on plumbings} and Section \ref{section proof of plumbing tree theorem}.
\end{remark}

In order to investigate the above idea, let us assume that we have a Weinstein handle decomposition $\mathcal{H}$ of $W$.
We would like to consider the following two-step argument.  

The first step is to find a product structure on the subcritical part, i.e., the union of all subcritical Weinstein handles in $\mathcal{H}$. 
To be more precise, let $W_0$ denote the subcritical part.
Then, we would like to find a Weinstein manifold $F$ such that 
\[W_0 \simeq F \times \mathbb{C}.\]
If $\mathcal{H}$ produces a Lefschetz fibration $\pi$ on $W$, then $F$ is the regular fiber of $\pi$. 

The second step is to find the singular value information, i.e., their cyclic order and their vanishing cycles. 
We note that attaching spheres of critical Weinstein handles are Legendrian spheres on 
\[\partial_\infty W_0 \simeq \partial_\infty (F \times \mathbb{C}) \simeq \left(\partial_\infty F \times \mathbb{C}\right) \cup \left(F \times \partial_\infty \mathbb{C}\right).\]
If $\mathcal{H}$ produces a Lefschetz fibration, then one can acquire the singular value information from those attaching Legendrian spheres.

One can easily see that if the collection of Legendrian spheres give the singular value information, there exist some restrictions that the collection is necessarily to satisfy.
For example, the Legendrian spheres should lie on the vertical boundary of $W_0$, i.e., $F \times \partial_\infty \mathbb{C} \subset \partial_\infty W_0$. 
Moreover, each Legendrian sphere should ``correspond'' to a Lagrangian sphere of the Fiber $F$.
We do not explain in what sense the Legendrian and Lagrangian spheres correspond to each other, but if they are related, then the Lagrangian sphere becomes the corresponding vanishing cycle. 

We note that for any Weinstein handle decomposition $\mathcal{H}$ of $W$, the existence of $F$ in the first step is always guaranteed by \cite{Cieliebak, CE}.
However, not every $\mathcal{H}$ can pass the second step, because of the restrictions mentioned above. 
Section \ref{subsect example : the case of T^*S^n} gives an example of a Weinstein handle decomposition of $T^*S^n$, which cannot pass the second step. 
Thus, it is natural to ask what Weinstein handle decomposition can pass the second step. 

It is easy to show that if $\mathcal{H}$ satisfies the following condition $(\star)$, then $\mathcal{H}$ should pass the second step.  
\begin{itemize}
	\item[($\star$)] For any attaching sphere $\Lambda \subset \partial_\infty(F \times \mathbb{C})$ of a critical Weinstein handle, there exists an exact Lagrangian sphere $L \subset F$ such that $\Lambda$ is a Legendrian lift of $L$. 
\end{itemize}
The notion of Legendrian lift of an exact Lagrangian is defined in Definition \ref{def legendrian lift}.

Theorem \ref{thm rough statement} claims that if $W$ is a cotangent bundle, then there exists a Weinstein handle decomposition $\mathcal{H}$ of $W$ satisfying $(\star)$. 
We will construct such $\mathcal{H}$ from a handle decomposition of the zero section of $W$, or a Lagrangian skeleta of $W$. 
Before going further, we briefly explain a relation between a Weinstein handle decomposition and its corresponding Lagrangian skeleta. 

If $W_0$ is the union of all subcritical handles in $\mathcal{H}$ and if $\mathcal{H}$ has $m$ many critical handles, then the Lagrangian skeleta of $W$ induced from $\mathcal{H}$ is given as 
\begin{gather}
	\label{eqn skeleton 2}
	\mathrm{Skel}(W) = \mathrm{Skel}(W_0) \cup \left(\bigcup_{i=1}^m \mathbb{D}^n\right).
\end{gather} 
To be more precise, we note that every critical Weinstein handle has a unique zero of the Liouville vector field, thus we have $m$ many zeros from the critical handles. 
The stable manifolds of $m$ zeros are open Lagrangian disks in $W$. 
In Equation \eqref{eqn skeleton 2}, $\mathbb{D}^n$s mean the closure of the stable Lagrangian disks of zeros.

Also, we note that $W_0 \simeq F \times \mathbb{C}$ as described above.
By taking an isotropic change on the Weinstein structure, one can assume that $W_0$ admits the product Weinstein structure.  
Then, $\mathrm{Skel}(W_0)$ is
\[\mathrm{Skel}(W_0) = \mathrm{Skel}(F) \times \{0\}.\]

Thus, the skeleta of $W$ is determined by attachments of $m$-many disks to $\mathrm{Skel}(F)$. 
For a critical Weinstein handle in $\mathcal{H}$, one can encode the corresponding attaching information as a map 
\begin{gather}
 	\label{eqn attaching map}
 	\partial \mathbb{D}^n = S^{n-1} \to \mathrm{Skel}(W_0) = \mathrm{Skel}(F).
\end{gather}
Then, the image of the map will be 
\[\lim_{t \to -\infty} \phi^t_W(\Lambda),\]
where $\Lambda$ is the attaching sphere of the critical handle and $\phi^t_W$ is the Liouville vector flow of time $t$. 
We note that in the above equation, for each $t$, $\phi^t_W(\Lambda)$ is a closed subset of $W$ and the limit is the limit of closed subsets. 

Let us assume that the map in \eqref{eqn attaching map} is injective. 
Then, the image of the map is homeomorphic to a sphere.
Moreover, if the skeleton of $F$ satisfies technical conditions detailed in Section \ref{subsection recovering Lagrangian skeleta}, then it would be possible to obtain an exact Lagrangian sphere $L \subset F$ by smoothing the image of the map. 
Then, one can expect that the Legendrian lift of $L$ is Legendrian isotopic to $\Lambda$.
 
In Sections \ref{section the proof of Theorem main} and \ref{section examples}, we prove Theorem \ref{thm rough statement}. 
More precisely, for a given handle decomposition of $M$, we construct a Weinstein handle decomposition of $T^*M$ such that the maps in \eqref{eqn attaching map} are injective for all critical handles in $\mathcal{H}_\mathcal{D}$.
Then, we prove that this Weinstein handle decomposition passes two steps described above.

\subsubsection{The idea for Theorems \ref{thm rough statement2}--\ref{thm rough statement3}}
\label{subsubsectioin idea for plumbings}
Theorem \ref{thm rough statement2} considers the plumbing Weinstein manifolds $W$ of two cotangent bundles $T^*M_1, T^*M_2$. 
Roughly, in order to prove Theorem \ref{thm rough statement2}, we produce Lefschetz fibrations for $T^*M_1$ and $T^*M_2$ respectively, by applying Theorem \ref{thm rough statement}, then we combine those two Lefschetz fibrations. 

More precisely, we construct Weinstein handle decomposition $\mathcal{H}_i$ of $T^*M_i$, which produces a Lefschetz fibration.  
Let 
\[\mathcal{H}_1 =\{A_0, \dots, A_{m_1}\}, \mathcal{H}_2 = \{B_0, \dots, B_{m_2}\},\]
and let $A_0$ and $B_0$ be index 0 Weinstein handles. 

We note that 
\[A_0, B_0 \simeq \mathbb{D}^n \times \mathbb{D}^n.\]
Then, by attaching Weinstein handles $A_1, \dots, A_{m_1}$ (resp.\ $B_1, \dots, B_{m_2}$) along $(\partial \mathbb{D}^n) \times \mathbb{D}^n$ (resp.\ $\mathbb{D}^n \times (\partial \mathbb{D}^n)$), one obtains the plumbing space $W$. 

This gives a Weinstein handle decomposition of the plumbing space $W$. 
Moreover, the Weinstein handle decomposition produces a Lefschetz fibration of $W$. 

We would like to point out that the idea will work for plumbings of multiple cotangent bundles, after a slight modification, even if we plumb three or more cotangent bundles. 
As the modification, we need to identify a ``critical" handle of one cotangent bundle and the unique zero handle of another cotangent bundle. 
The modified idea will be explained with details at the beginning of Section \ref{section sketch of the proof of theorem}. 

Even though the modified idea seems work, when one applies the idea to a plumbing of three or more cotangent bundles, the Weinstein handle decomposition of the plumbing space could be complicated. 
And, the complexity of the handle decomposition will affect the complexity of the resulting Lefschetz fibration. 
Especially, the resulting Lefschetz fibration can have a complicated fiber. 

In order to avoid the complexity, we restrict our attention to {\em simple} plumbings, i.e., plumbings satisfying the following two conditions:
\begin{itemize}
	\item The plumbing spaces consist of multiple copies of $T^*S^n$. 
	\item The plumbing patterns are trees. 
\end{itemize}
Theorem \ref{thm rough statement3} and its proof given in Sections \ref{section sketch of the proof of theorem}--\ref{section proof of plumbing tree theorem} will give us an algorithm producing a Lefschetz fibration of a such plumbing.

\subsubsection{Other results}
\label{subsubsect other results}
The current paper consists of two parts, except Sections \ref{section introduction} and \ref{section preliminaries} which are the introduction and the preliminaries. 

The main theorem of the first part is Theorem \ref{thm rough statement}. 
We note that Theorem \ref{thm rough statement} gives multiple Lefschetz fibrations for a Weinstein manifold.
Thus, one can ask about the relationship between them.
Proposition \ref{prop rough statement} is an answer to that when the Weinstein manifold is a cotangent bundle of a surface.

\begin{proposition}[Technical statement is Proposition \ref{prop handle moves}]
	\label{prop rough statement}
	If $M$ is a $2$-dimensional smooth manifold, then the Lefschetz fibration on $T^*M$ obtained by applying Theorem \ref{thm rough statement} is unique up to four moves that are given in Section \ref{subsect four moves}.
\end{proposition}

The second part of this paper considers plumbing spaces of two types. 
The first type is plumbings of two cotangent bundles. Theorem \ref{thm rough statement2} considers the first type. 
The second type is plumbings of copies of the cotangent bundle of a sphere along a tree. Theorem \ref{thm rough statement3} considers the second type. 

Even though we consider some restricted plumbings, there exist possible applications. 
One of the possible applications is Corollary \ref{cor diffeomorphic family} which gives diffeomorphic families of Weinstein manifolds.
The members of a diffeomorphic family are plumbings of cotangent bundles of spheres.
Moreover, the families contain some Milnor fibers of simple singularities. 
For example, one can see that Minor fibers of $A_{4k+3}, D_{4k+3}$-singularities are diffeomorphic to each other if their dimension is $2n$ with odd $n \geq 3$. 
For more detail, see Corollary \ref{cor diffeomorphic family}.

\subsection{Acknowledgment}
\label{subsect acknoledgment} 
The author appreciates Hongtaek Jung for the helpful discussions. 
Also, the author appreciates Cheol-Hyun Cho for the discussion initiating the second part of the present paper. 
The author would also like to thank an anonymous referee for helpful comments and suggestions.

This work was partially supported by the Institute for Basic Science (IBS-R003-D1) and also by a KIAS Individual Grant (MG094401) at Korea Institute for Advanced Study.

\section{Preliminaries}
\label{section preliminaries}
In Section \ref{section preliminaries}, we review preliminaries and partially set notation.

\subsection{Handle decomposition}
\label{subsect handle decomposition}
In the present subsection, we explain what notion we mean by ``handle decomposition''.

\begin{definition}
	\label{def standard handle}
	\mbox{}
	\begin{enumerate}
		\item {\em An $n$-dimensional standard handle $h^i$ of index $i$} is a subspace 
		\[
		h^i=\mathbb{D}^i \times \mathbb{D}^{n-i}
		\]
		in $\mathbb{R}^n$, where $\mathbb{D}^k$ is the disk of radius $1$ in $\mathbb{R}^k$.  
		\item The {\em attaching region} of $h^i$ is $\partial \mathbb{D}^i \times \mathbb{D}^{n-i} = S^{i-1} \times \mathbb{D}^{n-i}$.
		Let $\partial_R h^i$ denote the attaching region of $h^i$.
	\end{enumerate}
\end{definition}
If there is no chance of confusion, we sometimes omit the dimension of a handle and simply call it $i$-handle. 

Let $M$ be an $n$-dimensional manifold with boundary. 
If there is a map $\phi: \partial_R h^i \to \partial M$, then one can attach the $n$-dimensional standard handle $h^i$ to $M$.
As the result of the attaching, one obtains another $n$-dimensional manifold, given as follows:
\[M \sqcup h / \sim, x \sim \phi(x) \text{  for all } x \in \partial_R h.\] 
Based on this, the notion of {\em handle decomposition of $M$} means data explaining the construction of $M$ as a union of handles. 
More precise definition is following bellow.  

\begin{definition}
	\label{def handle decomposition}
By a {\em handle decomposition of an $n$-dimensional smooth manifold $M$}, we mean a finite, ordered set of $n$-dimensional handles $\{h_0, \dots, h_m\}$ together with the injective maps $\phi_i : \partial_R h_i \to \partial(\cup_{j=0}^{i-1} h_j)$ satisfying the following:
\begin{itemize}
	\item $h_0$ is the unique index $0$-handle;
	\item there exists a natural number $N$ such that for $i \leq N$ (resp.\ $i >N$), $h_i$ is subcritical (resp.\ critical), i.e., $\ind(h_i) < n $ (resp.\ $\ind(h_i) = n$);
	\item two different critical handles are disjoint, or equivalently, every critical handle are attached to the union of subcritical handles;
	\item $\cup_{i=0}^m h_i$ is diffeomorphic to $M$. 	  
\end{itemize}
The maps $\phi_i$ are called {\em gluing maps}.
\end{definition}
We note that the word ``union" mentioned in the above definition does not mean the disjoint union of standard handles.
The union means the gluing by the gluing maps $\phi_i$. 

\begin{remark}
	\label{rmk nonusual definition of handle decomposition}
	We also note that Definition \ref{def handle decomposition} is not a definition which is usually used in literature. 
	However, we use Definition \ref{def handle decomposition} for some technical reasons which will appear later.
\end{remark}

We also define the following notation for the later use.
\begin{definition}
	\label{def set of handle decompositions}
	Let $\mathcal{H}(M)$ be the set of handle decomposition of a smooth manifold $M$. 
\end{definition}

\subsection{Weinstein Handle}
\label{subsect Weinstein handle}
We review the notion of Weinstein handle and their attachment in Section \ref{subsect Weinstein handle}.
For more detail, we refer the reader to Weinstein \cite{Weinstein}.

In order to define a standard Weinstein handle, we fix a smooth function $F : \mathbb{R}^2 \to \mathbb{R}$ such that
\begin{itemize}
	\item $F(0,0) \neq 0$,
	\item whenever $F(x,y)=0$, the partial derivatives of $F$, $\frac{\partial F}{\partial x}, \frac{\partial F}{\partial y}$ do not have the same sign,
	\item $\frac{\partial F}{\partial x} \neq 0$ when $y =0$, and
	\item $\frac{\partial F}{\partial y} \neq 0$ when $x =0$.
\end{itemize}

Let fix an integer $i$, in order to define the Weinstein handle of index $i$.
Let the standard symplectic Euclidean space $(\mathbb{R}^{2n}, \omega_{std})$ be equipped with a Liouville form 
\begin{gather}
	\label{eqn liouville form}
	\lambda_i = \sum_{j=1}^i -(x_j dy_j + 2 y_j dx_j) + \sum_{j=1}^{n-i}\frac{1}{2}(p_j dq_j - q_j d p_j).
\end{gather}
Here $(x_1, \dots, x_i, y_1, \dots, y_i, p_1, \dots, p_{n-i}, q_1, \dots, q_{n-i})$ are coordinates of $\mathbb{R}^{2n}$.
Then, the Liouville vector field corresponding to $\lambda_i$ is the gradient vector field, with respect to the standard Euclidean metric, of the Morse function
\[f_i = \sum_{j=1}^i(y_j^2 - \frac{1}{2}x_j^2) + \sum_{j = 1}^{n-i}\frac{1}{4}(p_j^2 + q_j^2). \] 

Weinstein \cite{Weinstein} defined the notion of Weinstein handle as follows:
\begin{definition}
	\label{def Weinstein handle}
	{\em The standard $2n$-dimensional Weinstein $i$-handle $H^i$} is a region of $(\mathbb{R}^{2n}, \omega_{std}, \lambda_i)$ satisfying that
	\begin{itemize}
		\item the region is bounded by 
		\[\left\{f_i^{-1}\left(-\tfrac{1}{2}\right)\right\} \text{  and  } \left\{F\left(\sum_{j=1}^i x_j^2, \sum_{j=1}^{n-i} p_j^2 + \sum_{j=1}^i y_j^2 +\sum_{j=1}^{n-i} q_j^2 = 0\right)\right\},\] 
		\item the region contains the origin point.
	\end{itemize} 
\end{definition}
\cite[Lemma 3.1]{Weinstein} proved that the choice of a specific function $F$ does not affect a standard handle up to symplectic completion. 

\begin{remark}
	\label{rmk comparision of handles and Weinsteion handles}
	It is easy to check that as a smooth manifold, the $2n$-dimensional standard Weinstein $i$-handle $H^i$ is diffeomorphic to a smooth $2n$-dimensional $i$-handle $h^i$. 
	In order to avoid confusion, we will use the uppercase $H$ (resp.\ the lower case $h$) for a Weinstein handle (resp.\ smooth handle).
\end{remark}

The following notion are necessarily to discuss the attachment of Weinstein handles:
\begin{definition}
	\label{def attaching spheres and regions of Weinsteion handle}
	\mbox{}
	\begin{enumerate}
		\item The {\em attaching region} of $H^i$ is the intersection of $\partial H^i$ and $f_i^{-1}(-\frac{1}{2})$. As similar to the case of smooth handles, let $\partial_R H^i$ denote the attaching region.
		\item The {\em attaching sphere} of $H^i$ is the intersection of $\partial_R H^i$ and the isotropic subspace 
		\[\{y_1 = \dots = y_i = p_1 = \dots = p_{n-i} = q_1 = \dots = q_{n-i} = 0 \} \subset \mathbb{R}^{2n}.\]
		Let $\partial_S H^i$ denote the attaching sphere. 
	\end{enumerate} 
\end{definition}

In order to attach a Weinstein handle $H$ to a Weinstein domain $W$, one needs a gluing map $\phi: \partial_R H \to \partial W$.
The difference from the smooth handle attachment is that one should consider the Weinstein structures on $H$ and $W$. 
Thus, the gluing map should preserve the contact structure, or more precisely, $\phi$ should be a contactomorphism between $\partial_R H$ and the image of $\phi$. 

\begin{remark}
	\label{rmk contact isotopic}
	Let $W$ be a Weinstein manifold. 
	Let us assume that there are two gluing maps $\phi_0, \phi_1 : \partial_R H \to \partial W$ which are contacto isotopic in the following sense:
	there exists a one-parameter family $f_t : W \stackrel{\sim}{\to} W$ of symplectomorphisms, such that $f_0$ is the identity and $\phi_1 = f_1 \circ \phi_0$.
	
	If $W_i$ denotes the Weinstein manifold obtained by attaching $H$ to $W$ with $\phi_i$, it is easy to check that $W_0$ and $W_1$ have symplectomorphic symplectic completions.  
	One can show that by using the one-parameter family $W_t$ of Weinstein manifolds which are obtained by attaching $H$ to $W$ with $f_t \circ \phi_0$. 
\end{remark}

\cite{Weinstein} showed that in order to attach a Weinstein handle $H^i$ of index $i$, it is enough to remember some local information, rather than the gluing map defined on the whole attaching region. 
More precise statement will appear at the last part of the present subsection.

The local information consist of a pair of an isotropic $(i-1)$ sphere $\Lambda$, which the attaching sphere of $H^i$ will be attached along, and a trivialization of the ``conformal symplectic normal bundle of $\Lambda$''. 
In the rest of Section \ref{subsect Weinstein handle}, we review the notion of conformal symplectic normal bundle. 

Let $(X, \xi)$ be a $(2n-1)$-dimensional contact manifold where $\xi$ is the given contact structure.
(Or one could consider a $2n$-dimensional Weinstein domain and let $X$ be the boundary of $W$.) 
If $\alpha$ is a contact form on $X$, then, it is well-known that $(\xi_x, d\alpha)$ is a symplectic vector space.

Let $\Lambda$ be an isotropic $(i-1)$-dimensional sphere in $X$.
Then, $T_x \Lambda$ is an isotropic subspace of a symplectic vector space $(\xi_x, d\alpha)$. 
Thus, if $T_x\Lambda^{\perp'}$ means the symplectic dual of $T_x\Lambda$, i.e., 
\[T_x\Lambda^{\perp'} := \{ v \in \xi_x \hspace{0.2em} | \hspace{0.2em} d \alpha(v, w) = 0 \text{  for all  } w \in T_x\Lambda\},\]
then, 
\[T_x\Lambda \subset T_x \Lambda^{\perp'}.\]

One can easily check that the quotient 
\begin{gather}
	\label{eqn def of csn}
	T\Lambda^{\perp'}/T\Lambda 	
\end{gather}
is a $(2n-2i)$-dimensional vector bundle over $\Lambda$ which carries a conformal symplectic structure naturally induced from $d \alpha$. 

\begin{definition}
	\label{def csn}
	The quotient in Equation \eqref{eqn def of csn} is called {\em the conformal symplectic normal bundle} of $\Lambda$.
	Let $CSN(\Lambda)$ denote the conformal symplectic normal bundle of $\Lambda$.
\end{definition}

The result of \cite{Weinstein} is to determine a contact isotopy class of a gluing map $\phi: \partial_R H \to X$ from a pair of $\Lambda$ and $CSN(\Lambda)$.
Thus, one could attach a Weinstein handle from the information given by the pair $(\Lambda, CSN(\Lambda))$ uniquely up to symplectomorphic symplectic completion.
Remark \ref{rmk contact isotopic} explains briefly how the contact isotopy class induces the uniqueness. 

Conversely, if there is a gluing map $\phi: \partial_R H \to X$, then $\phi$ induces an isotropic sphere $\Lambda := \phi(\partial_S H)$ and the differential $D\phi$ induces a trivialization of $CSN(\Lambda)$, which the pair recovers the contact isotopy class of $\phi$. 

\subsection{Weinstein handle decomposition}
\label{subsect Weinstein handle decomposition}
It is well-known that every Weinstein domain can be broken down into Weinstein handles, or equivalently, every Weinstein domain admits a Weinstein handle decomposition.
In Section \ref{subsect Weinstein handle decomposition}, we defined the notion of Weinstein handle decomposition that we use in the present paper. 

We recall that Definition \ref{def handle decomposition} defines a handle decomposition of $M$ as a collection of handles and gluing information of them.
In other words, a handle decomposition of $M$ explains how to construct $M$ as an attachment of handles.
In the context, constructing $M$ actually means that constructing a smooth manifold which is diffeomorphic to $M$, i.e., Definition \ref{def handle decomposition} is defined up to diffeomorphisms. 

As similar to Definition \ref{def handle decomposition}, we define a handle decomposition of a Weinstein domain $W$ as a collection of Weinstein handles together with gluing information.
Thus, a Weinstein handle decomposition of $W$ constructs a Weinstein domain by gluing Weinstein handles, which is equivalent to $W$. 
Before defining the notion of a Weinstein handle decomposition, we discuss which equivalence we consider in the current paper. 

A technical difficulty of studying Weinstein domains arises from the incompleteness of Weinstein domains. 
In order to resolve the difficulty, one could take the symplectic completions of them.
For more details, we refer the reader to \cite[Section 11]{CE}.
Based on this, we define the equivalence as follows:
\begin{definition}
	\label{def equivalence of Weinstein domains}
	We say that two Weinstein domains are {\em equivalent} to each other if their symplectic completions are exact symplectomorphic.
\end{definition}
We note that if two finite type Weinstein manifolds are symplectomorphic, then they are exact symplectomorphic by \cite[Theorem 11.2]{CE}.

\begin{definition}
	\label{def Weinstein handle decomposition}
	By a {\em Weinstein handle decomposition} of a Weinstein domain $W$, we mean a finite, ordered set of Weinstein handles ${H_0, \dots, H_m}$ together with the injective maps $\Phi_i : \partial_S H_i \to \partial (\cup_{j=0}^{i-1} H_j)$ whose images are isotropic spheres, and trivializations of the conformal symplectic normal bundle of $\Phi_i(\partial_S H_i)$ satisfying the following:
	\begin{itemize}
		\item $H_0$ is the unique index $0$-handle;
		\item there is a natural number $N$ such that for $i \leq N$ (resp.\ $i >N$), $H_i$ is subcritical (resp.\ critical), i.e., $\ind(H_i) < n $ (resp.\ $\ind(H_i) = n$);
		\item $\cup_{i=0}^m H_i$ and $W$ have symplectomorphic symplectic completions.
	\end{itemize}
\end{definition}
We note that the gluing information in Definition \ref{def handle decomposition} are given by gluing maps, defined on the whole attaching regions of each handle.
However, in Definition \ref{def Weinstein handle decomposition}, the gluing information are given as maps on attaching spheres and trivializations of the conformal symplectic normal bundles.

\subsection{Lefschetz fibration}
\label{subsect Lefschetz fibration}
We move on to our main interest, i.e., Lefschetz fibrations.
Since the definition and its properties are well-known in the literature, we briefly describe it in here. 
For more details, see \cite{ Seidel1, Maydanskiy, BEE}, etc.

To define the notion of Lefschetz fibration, we need some preparations. 
First, we see the unit disk $\mathbb{D}^2$ as a Liouville domain equipped with the one form $\mu = \tfrac{1}{2} \left(xdy - ydx\right) = \tfrac{1}{2}r^2 d\theta$. 
We also give the standard complex structure on $\mathbb{D}^2$, and then the Liouville completion of $\mathbb{D}^2$ is $\mathbb{C}$ equipped with the standard symplectic structure. 
One can find the following definition in \cite[Section 3.1]{Maydanskiy}.

\begin{definition}
	\label{def Lefschetz fibration}
	Let $(W,\omega = d \lambda)$ be an exact symplectic manifold with corners. 
	A {\em Lefschetz fibration} on $W$ is a map $\pi : W\to \mathbb{D}^2$ is a $J$-holomorphic map, where $J$ is a $\omega$-compatible almost complex structure, satisfying the following properties:
	\begin{itemize}
		\item ({\em Transversality to $\partial \mathbb{D}^2$.})
		At every point $x \in W$ such that $y :=\pi(x) \in \partial \mathbb{D}^2$, 
		\[T_y\mathbb{D}^2 = T_y\left(\partial \mathbb{D}^2\right) \oplus D_\pi\left(T_xW\right).\]
		Because of this condition, $\pi^{-1}(\partial \mathbb{D}^2)$ is a boundary stratum of $W$ of codimension $1$.
		The part of $\partial W$ will be called the {\em vertical boundary} and denoted by $\partial^v W$ , and the union of boundary faces not contained in $\partial^v W$ will be called the {\em horizontal boundary} and denoted by $\partial^h W$.
		\item ({\em Regularity along $\partial^h W$.}) 
		If $H$ is a boundary face of $W$ such that $H \not\subset \partial^v W$, then $\pi|_H:H \to \mathbb{D}^2$ is a smooth fibration. 
		\item ({\em Horizontality of $\partial^h W$ with respect to the symplectic connection.})
		At any point $x \in W$, let $T_x^v W := \operatorname{ker}\left(D \pi_x\right)$. 
		Away from critical points, since $\pi$ is $J$-holomorphic, the symplectic complement, denoted by $T_x^h W$, defines a connection. 
		We require that if $x \in H$ for a boundary face $H$ such that $H \not\subset \partial^v W$, then $T_x^h W$ is contained in $T_x H$. 
		\item ({\em Lefschetz type critical points.})
		There are only finitely many points where $D\pi$ is not surjective, and for any such critical point $p$, there exist complex Darboux coordinates $(z_1, \dots, z_n)$ centered at $p$ so that $\pi(z_1, \dots, z_n) = \pi(p) + z_1^2 + \dots + z_n^2$. 
		Moreover, there is at most one critical point in each fiber of $\pi$.
	\end{itemize}
\end{definition}
We note that it would be more precise to use the term ``exact Lefschetz fibration" in Definition \ref{def Lefschetz fibration}. 
However, in this paper, this is the only type which we considered here. 
Thus, we omit the adjective for convenience.
 
Definition \ref{def Lefschetz fibration} is classical, but \cite{GP} suggested an alternative definition.

\begin{definition}
	\label{def abstract Lefschetz fibration}
	An {\em abstract Weinstein Lefschetz fibration} is a tuple 
	\[W = (F: L_1, \dots, , L_m)\]
	consisting of a Weinstein domain $F^{2n-2}$ (the ``central fiber'') along with a finite sequence of exact parameterized Lagrangian spheres $L_1, \dots, L_m \subset F$ (the ``vanishing cycles'').
\end{definition}

Definitions \ref{def Lefschetz fibration} and \ref{def abstract Lefschetz fibration} are interchangeable. 
In the rest of Section \ref{subsect Lefschetz fibration}, we explain how to obtain a Lefschetz fibration of a Weinstein manifold when an abstract Weinstein Lefschetz fibration is given briefly. 
For more details on the equivalence of Definitions \ref{def Lefschetz fibration} and \ref{def abstract Lefschetz fibration}, we refer the reader to \cite[Section 8]{BEE} or \cite[Section 6]{GP}.

Let $W = (F: L_1, \dots, , L_m)$ be a given abstract Weinstein Lefschetz fibration.
Then, one can construct a Weinstein domain as follows:
First, we consider the product of $F$ and $\mathbb{D}^2$. 
Then, the vertical boundary $F \times \partial \mathbb{D}^2$ admits a natural contact structure. 
Moreover, the vanishing cycle $L_i$ can be lifted to a Legendrian $\Lambda_i$ near $2 \pi i/m \in S^1$. 
The lifting procedure is given in Section \ref{subsubsection some contact topology}.
We note that by assuming that the disk $\mathbb{D}^2$ has a sufficiently large radius, one could assume that the projection images of $\Lambda_i$ onto the $S^1$ factor are disjoint to each other. 
Finally, one could attach critical Weinstein handles along $\Lambda_i$ for all $i = 1, \dots, m$.
Then, the completion of the resulting Weinstein domain admits a Lefschetz fibration satisfying that the regular fiber is $F$, and that there are exactly $m$ singular values near $2 \pi i /m \in S^1$.

\clearpage

\part{Lefschetz fibrations on cotangent bundles}
\label{part Lefschetz fibrations on cotangent bundles}
The main goal of the first part is to investigate the idea given in Section \ref{subsect motivation} and to prove Theorem \ref{thm rough statement}. 
We review the idea in Section \ref{section the idea} and prove Theorem \ref{thm rough statement} in Sections \ref{section the proof of Theorem main} and \ref{section examples}. 
Before those, in Section \ref{section preparations}, we define some notions and set notation. 

\section{Preparations}
\label{section preparations}
Before discussing our construction of Lefschetz fibrations on cotangent bundles, we prepare the later sections that prove Theorem \ref{thm rough statement}.
In the first two subsections of the section, we discuss an algorithm producing a Weinstein handle decomposition of a cotangent bundle $T^*M$ from a handle decomposition of a smooth manifold $M$.
In the last subsection, Section \ref{subsection recovering Lagrangian skeleta}, we introduce technical terms and settings. 
 
\subsection{Attaching Legendrian}
\label{subsect attaching Legendrian}
We review that gluing information of a Weinstein handle consist of two things, an attaching map (or equivalently an attaching sphere) and a trivialization of the conformal symplectic normal bundle of the attaching sphere. 
In Section \ref{subsect attaching Legendrian}, we introduce a notion that combines these two gluing information.
The notion is called attaching Legendrian.  
The {\em attaching Legendrian} (resp.\ {\em core Lagrangian}) is defined on a standard Weinstein handle $H^i \subset \mathbb{R}^{2n}$, where $\mathbb{R}^{2n} = \mathbb{R}^{2k} \times \mathbb{R}^{2(n-k)}$ is coordinated by 
\[(x_1, \dots, x_k, y_1, \dots, y_k, p_1, \dots, p_{n-k}, q_1, \dots, q_{n-k}),\]
as we did in Equation \eqref{eqn liouville form}.

\begin{definition}
	\label{def attaching Legendrian}
	\mbox{}
	\begin{enumerate}
		\item 	The {\em attaching Legendrian $\partial_L H^k$} of the standard $2n$-dimensional Weinstein $k$-handle $H^k$ is the intersection of $\partial_R H^k$ and the region 
		\[\{y_1 = \dots = y_k = 0 = q_1 = \dots = q_{n-k}\}.\]
		\item The {\em core Lagrangian} of the standard $2n$-dimensional Weinstein $k$-handle $H^k$ is the intersection of the handle and the region
		\[\{y_1 = \dots = y_k = 0 = q_1 = \dots = q_{n-k}\}.\]
	\end{enumerate}
\end{definition}

\begin{remark}
	\label{rmk attaching Legendrian is not intrinsic}
	We note that the Liouville vector field has only one zero in a Weinstein handle, and that the attaching sphere is the boundary of the stable manifold of the unique zero with respect to the Liouville vector flow. 
	Thus, the attaching sphere of a Weinstein handle could be defined on the Weinstein handle without using coordinates.
	
	Differently from the attaching sphere, in order to define the notions of attaching Legendrian and core Lagrangian, a choice of coordinate charts is necessarily. 
	Thus, for a general Weinstein handle $H$, $\partial_L H$ is defined with respect to an identification with $H$ and the standard handle. 
	For convenience, we use the notions of attaching Legendrians and core Lagrangians without mentioning a choice of identifications.
\end{remark}

\begin{lemma}
	\label{lemma attaching legendrian}
	Let $X$ be a $(2n-1)$-dimensional contact manifold. 
	If there is a map $\phi: \partial_L H^k \hookrightarrow X$, where $H^k$ is the standard $2n$-dimensional Weinstein $k$-handle such that 
	\begin{itemize}
		\item $\phi$ is an embedding, and 
		\item $\mathrm{Im}(\phi)$ is a Legendrian in $X$, 
	\end{itemize}
	then $\phi$ induces a trivialization on $CSN(\Lambda)$ where $\Lambda := \phi(\partial_S H^k)$.
\end{lemma}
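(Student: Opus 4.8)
The plan is to extract, from the extra directions of the attaching Legendrian, a canonical framing of a Lagrangian subbundle of $CSN(\Lambda)$, and then to upgrade that to a framing of the full conformal symplectic normal bundle. I will use the coordinates of Section \ref{subsect attaching Legendrian} throughout.

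First I would record an elementary local computation in the standard handle. Along $\partial_S H^k$ (where $y=p=q=0$), the tangent space of $\partial_L H^k$ splits as $T(\partial_S H^k)\oplus\mathrm{span}\{\partial_{p_1},\dots,\partial_{p_{n-k}}\}$. Indeed, on $\{y=q=0\}$ one has $f_k=-\tfrac12\sum x_j^2+\tfrac14\sum p_j^2$, and at a point of $\partial_S H^k$ the differential of $f_k$ annihilates each $\partial_{p_j}$, so the coordinate fields $\partial_{p_1},\dots,\partial_{p_{n-k}}$ are tangent to the level set $\{f_k=-\tfrac12\}$ and transverse to $T(\partial_S H^k)$. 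Hence $\{\partial_{p_j}\}_{j=1}^{n-k}$ is a canonical framing of $T(\partial_L H^k)/T(\partial_S H^k)$ along the attaching sphere.

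Next I would transport this framing to $X$ and check that it lands in $CSN(\Lambda)$ as a framed Lagrangian subbundle. Write $L:=\mathrm{Im}(\phi)$ and $\Lambda:=\phi(\partial_S H^k)\subset L$. Since $\phi$ is a Legendrian embedding, $T_xL\subset\xi_x$ is isotropic for $d\alpha$ and contains $T_x\Lambda$; therefore $T_xL\subset(T_xL)^{\perp'}\subset(T_x\Lambda)^{\perp'}$, so the projection $(T_x\Lambda)^{\perp'}\to(T_x\Lambda)^{\perp'}/T_x\Lambda=CSN(\Lambda)_x$ carries $T_xL/T_x\Lambda$ to an isotropic subspace of dimension $(n-1)-(k-1)=n-k=\tfrac12\operatorname{rk}CSN(\Lambda)$. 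Thus $\ell:=TL|_\Lambda/T\Lambda$ is a Lagrangian subbundle of $CSN(\Lambda)$, and because $\phi$ is an embedding the pushed-forward fields $D\phi(\partial_{p_1}),\dots,D\phi(\partial_{p_{n-k}})$ remain independent modulo $T\Lambda$ and hence project to a framing of $\ell$.

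The step I expect to require the most care is the last one: promoting the framed Lagrangian subbundle $(\ell,\{D\phi(\partial_{p_j})\})$ to a trivialization of the whole conformal symplectic bundle $CSN(\Lambda)$. The mechanism is the standard reduction of structure group: a Lagrangian subbundle reduces the (conformal) symplectic group to $GL(n-k)$. Concretely I would choose a Lagrangian complement $\ell'$ of $\ell$ in $CSN(\Lambda)$ — a contractible choice, since such complements form an affine bundle modeled on the symmetric bilinear forms on $\ell$ — so that $d\alpha$ identifies $\ell'\cong\ell^*$ and $CSN(\Lambda)\cong\ell\oplus\ell^*$ with its standard conformal symplectic form. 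The framing of $\ell$ together with the dual framing of $\ell^*$ then yields a conformal symplectic framing of $CSN(\Lambda)$, well defined up to homotopy because the space of complements is contractible. Since the trivialization relevant to Weinstein handle attachment is only needed up to the homotopy (equivalently, contact-isotopy) class discussed in Section \ref{subsect Weinstein handle}, this produces the asserted trivialization. The one genuine subtlety to verify is that the conformal, rather than strict, nature of the symplectic structure does not obstruct $\ell'\cong\ell^*$; this is harmless, as rescaling $d\alpha$ by a positive function only rescales the duality isomorphism and leaves the homotopy class of the framing unchanged.
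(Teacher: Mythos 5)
Your proof is correct, but it takes a genuinely different route from the paper's. The paper's own argument is global: it invokes the Legendrian neighborhood theorem (\cite[Proposition 4.2]{Weinstein}), noting that since $\phi$ identifies the Legendrian $\partial_L H^k$ with its image, a whole contact neighborhood of $\mathrm{Im}(\phi)$ is identified with a neighborhood of $\partial_L H^k$ in the standard handle (equivalently, with a neighborhood of the zero section of the $1$-jet bundle), and the standard trivialization of $CSN(\partial_S H^k)$ is then transported through this contactomorphism. You instead work purely infinitesimally along $\Lambda$: the canonical splitting $T(\partial_L H^k)|_{\partial_S H^k}=T(\partial_S H^k)\oplus\mathrm{span}\{\partial_{p_j}\}$, pushed forward by $D\phi$, produces a framed Lagrangian subbundle $\ell=TL|_\Lambda/T\Lambda\subset CSN(\Lambda)$, and the standard reduction $CSN(\Lambda)\cong\ell\oplus\ell^*$ (via a contractible choice of Lagrangian complement) upgrades this to a conformal symplectic trivialization, well defined up to homotopy. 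Each approach has its merits: the paper's is shorter and yields an identification of entire contact neighborhoods, which is morally what the handle attachment ultimately uses; yours is more elementary and self-contained --- it needs only the derivative of $\phi$ along the attaching sphere plus linear symplectic algebra, no neighborhood theorem --- and it makes transparent that the induced trivialization is canonical up to homotopy, which is exactly the data Weinstein's attachment theorem requires (the contact isotopy class of the gluing map depends only on $(\Lambda,\,$homotopy class of trivialization of $CSN(\Lambda))$). Your dimension counts and the handling of the conformal ambiguity (rescaling $d\alpha$ by a positive function only rescales the duality $\ell'\cong\ell^*$) are both correct.
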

\begin{proof}
	Simply, \cite[Proposition 4.2]{Weinstein} proves Lemma \ref{lemma attaching legendrian}.
	
	In order to give more precise proof, let us note that, for any Legendrian $\Lambda$ in a contact manifold, there is a neighborhood that is contactomorphic to a neighborhood of $\Lambda$ in the Jet $1$ bundle of $\Lambda$. 
	We also note that $\phi$ identifies two Legendrians $\partial_L H^k$ and $\textrm{Im}\phi$.
	Thus, they have neighborhoods that are contactomorphic to each other. 
	Since $\partial_L H^k$ admits a natural trivialization induced from the coordinate of the standard handle.
	It induces a trivialization of $CSN(\Lambda)$. 
\end{proof}

Lemma \ref{lemma attaching legendrian} concludes that if there is a map $\phi: \partial_LH^k \to \partial_\infty W$ satisfying the setting in Lemma \ref{lemma attaching legendrian}, then the map $\phi$ encodes gluing information of a Weinstein handle to a Weinstein manifold $W$. 
Moreover, Lemma \ref{lemma Legendrian isotopy} shows that it is enough to consider the Legendrian isotopy class of the image of the attaching Legendrian. 

\begin{lemma}
	\label{lemma Legendrian isotopy}
	Let $W$ be a Weinstein manifold and there is a map $\phi : \partial_LH^k \to \partial_\infty W$ satisfying the conditions in Lemma \ref{lemma attaching legendrian}.
	Let $\Lambda_t$ be an Legendrian isotopy connecting $\Lambda_0 := \phi(\partial_L H^k)$ and $\Lambda_1$. 
	If $W_i$ denotes the Weinstein domain obtained by attaching $H^k$ along $\Lambda_i$ for $i= 0, 1$, then $W_0$ and $W_1$ have symplectomorphic completions. 
\end{lemma}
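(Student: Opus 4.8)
The plan is to reduce the statement to the contact-isotopy invariance of Weinstein handle attachment, which was already established through the work of Weinstein \cite{Weinstein} and summarized in Remark \ref{rmk contact isotopic}. First I would observe that a Legendrian isotopy $\Lambda_t$ in $\partial W$ is, by the Legendrian isotopy extension theorem, the trace of an ambient contact isotopy of $\partial W$; that is, there is a smooth one-parameter family of contactomorphisms $g_t : \partial W \to \partial W$ with $g_0 = \operatorname{id}$ and $g_t(\Lambda_0) = \Lambda_t$. This is the standard fact that Legendrian isotopies are induced by ambient contact isotopies, and I would cite it rather than reprove it.

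Next I would upgrade the ambient contact isotopy on the boundary to a Liouville (exact symplectic) isotopy of $W$ itself. Because $\partial W$ carries the contact structure induced by the Liouville form $\lambda$, and because $W$ is the symplectization of a collar of $\partial W$ near the boundary, the contact isotopy $g_t$ extends to a one-parameter family of exact symplectomorphisms $f_t : W \to W$ with $f_0 = \operatorname{id}$, at least after passing to the completion where the collar is available. This places us exactly in the hypotheses of Remark \ref{rmk contact isotopic}, with the two gluing data being the attaching Legendrians $\Lambda_0 = \phi(\partial_L H^k)$ and $\Lambda_1 = f_1(\Lambda_0)$, and with $\phi_1 = f_1 \circ \phi_0$.

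I would then invoke the machinery assembled earlier in the excerpt to see that the Legendrian isotopy carries along the full gluing information, not merely the attaching sphere. By Lemma \ref{lemma attaching legendrian}, each attaching Legendrian $\Lambda_t$ determines a trivialization of $CSN(\Lambda_t)$, where $\Lambda_t$ here denotes the corresponding isotropic attaching sphere; and the differential $Df_t$ transports the trivialization at time $0$ to the one at time $t$ compatibly, so that the pair $(\Lambda_t, CSN(\Lambda_t))$ varies continuously. Since Weinstein's theorem (as recalled in Section \ref{subsect Weinstein handle}) says that this pair determines the contact-isotopy class of the gluing map, the two attachments at $t=0$ and $t=1$ are related by the one-parameter family $f_t \circ \phi_0$ of contactomorphic gluing maps. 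Applying Remark \ref{rmk contact isotopic} with $W_t$ the Weinstein domain obtained by attaching $H^k$ along $f_t \circ \phi_0$ then yields that $W_0$ and $W_1$ have symplectomorphic, indeed exact symplectomorphic, symplectic completions.

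The main obstacle I expect is the second step: making the extension of the boundary contact isotopy to an exact symplectic isotopy of the (completed) domain fully rigorous, together with checking that the induced trivializations of the conformal symplectic normal bundles match up under $Df_t$. The Legendrian isotopy extension and the collar/symplectization argument are standard, but one must be careful that the extension $f_t$ is exact (not merely symplectic) so as to stay within the equivalence of Definition \ref{def equivalence of Weinstein domains}; this is where I would lean on \cite[Section 11]{CE} and \cite{Weinstein} rather than argue from scratch. Once $f_t$ is produced with the correct properties, the remainder is a direct appeal to Remark \ref{rmk contact isotopic}.
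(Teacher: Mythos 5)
Your proposal is correct, and its first step is identical to the paper's: both invoke the Legendrian isotopy extension theorem to promote $\Lambda_t$ to an ambient contact isotopy of $\partial W$ (the paper cites \cite[Section 2.5]{Geiges} for this). Where you diverge is in the second half. The paper does \emph{not} extend the contact isotopy into the domain; instead it inserts a collar $\partial W \times [0,1]$ carrying a Liouville structure, supplied by \cite[Lemma 12.5]{CE}, which is Weinstein homotopic to the product structure $e^t\alpha$ and whose holonomy from $\partial W \times \{0\}$ to $\partial W \times \{1\}$ is the contactomorphism $\psi_1$; attaching the handle on top of this collar identifies the attachment along $\Lambda_1$ with the attachment along $\Lambda_0$, and invariance of the completion under Weinstein homotopy finishes the argument. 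You instead lift the boundary contact isotopy to a one-parameter family of exact symplectomorphisms $f_t$ of the completion and then feed $\phi_1 = f_1 \circ \phi_0$ into Remark \ref{rmk contact isotopic}, also checking that $Df_t$ transports the $CSN$-trivializations of Lemma \ref{lemma attaching legendrian}. Both routes are valid, but note the trade-off: your reduction target, Remark \ref{rmk contact isotopic}, is itself only sketched in the paper (``it is easy to check \dots''), and the lifting of a boundary contact isotopy to a compactly cut-off Hamiltonian isotopy of the completion, while standard, is exactly the kind of analysis that the paper's citation of \cite[Lemma 12.5]{CE} is designed to encapsulate; so your argument shifts rather than removes the technical burden, whereas the paper discharges it onto a single published lemma. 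Conversely, your approach has the merit of making the conjugation mechanism explicit and of tracking the full gluing data (Legendrian plus $CSN$-trivialization) through the isotopy, a point the paper's proof leaves implicit.
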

\begin{proof}
	On the contact manifold $\partial_\infty W$, the Legendrian isotopy $\Lambda_t$ can be extended to the contact isotopy $\psi_t$ of $\partial_\infty W$. 
	For the extension procedure, we refer the reader to \cite[Section 2.5]{Geiges}. 
	By \cite[Lemma 12.5]{CE}, there is a Liouville form on $\partial_\infty W \times [0,1]$ such that the corresponding holonomy from $\partial_\infty W \times \{0\}$ to $\partial_\infty W \times \{1\}$ is the contact isotopy $\psi_1$. 
	Since a Weinstein homotopic change does not affect on the equivalence class of the resulting symplectic manifold, it completes the proof. 
\end{proof}

\subsection{Weinstein handle decomposition of $T^*M$}
\label{subsect Weinstein handle decomposition of T^*M}
We prove Lemma \ref{lem handle decomposition of cotangent bundle} in Section \ref{subsect Weinstein handle decomposition of T^*M}, which gives us a Weinstein handle decomposition of $T^*M$ from a handle decomposition of $M$. 
Before stating and proving Lemma \ref{lem handle decomposition of cotangent bundle}, we set notation first.
\vskip0.2in

\noindent{\em Notation.}
A handle decomposition $D$ of $M$ means an ordered collection of handles $\{h_0, \dots, h_m \}$ together with the gluing information as defined in Definition \ref{def handle decomposition}.
Let 
\[\phi_i : \partial_R h_i \to \partial (\cup_{j < i} h_j). \]
denote the gluing map for $h_i$.

For a given handle decomposition $D$ of $M$, we consider a collection of Weinstein handles $\{H_0, \dots, H_m\}$ such that 
\[\dim(H_i) = 2n \text{  and  } \ind(H_i) = \ind(h_i)\]
for all $i = 0, \dots, m$. 
Then, one can easily construct an embedding $\iota_i : h_i \hookrightarrow H_i$ such that 
\begin{enumerate}
	\item $\iota_i(h_i)$ is the core Lagrangian of $H_i$, and
	\item $\iota_i$ sends $\partial_R h_i$ to the attaching Legendrian $\partial_L H_i$ of $H_i$.
\end{enumerate}
The core Lagrangian and the attaching Legendrian are defined in Definition \ref{def attaching Legendrian}. 

As discussed in Section \ref{subsect attaching Legendrian}, the gluing information for Weinstein handles can be given by maps defined on the attaching Legendrians of Weinstein handles. 
Then, the following map 
\[\Phi_i : \partial_L H_i \stackrel{\iota_i^{-1}}{\to} \partial_R h_i \stackrel{\phi_i}{\to} \partial (\cup_{j<i} h_j) \stackrel{\cup \iota_j}{\to} \partial (\cup_{j<i} \partial H_j)\]
gives the attaching information by Lemma \ref{lemma attaching legendrian}. 

Let $W_D$ denote the resulting Weinstein domain by attaching Weinstein handles in $\mathcal{D}$.
Now we can state Lemma \ref{lem handle decomposition of cotangent bundle}.
 
\begin{lemma}
	\label{lem handle decomposition of cotangent bundle}
	The cotangent bundle $T^*M$ and $W_D$ are symplectomorphic up to symplectic completions. 
\end{lemma}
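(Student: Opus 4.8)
The plan is to prove the statement by induction on the handles of $D$, matching each Weinstein handle attachment in the construction of $W_D$ with the corresponding cotangent-bundle handle attachment on the smooth side. The organizing object is the union of core Lagrangians
\[ L := \bigcup_{i=0}^m \iota_i(h_i) \subset W_D. \]
Since each $\iota_i$ carries $h_i$ to the core Lagrangian of $H_i$ and $\partial_R h_i$ to its attaching Legendrian $\partial_L H_i$ (Definition~\ref{def attaching Legendrian}), and since the gluing maps $\Phi_i$ are by construction the $\iota$-conjugates of the smooth gluing maps $\phi_i$, the pieces $\iota_i(h_i)$ glue along their boundaries exactly as the smooth handles $h_i$ glue to form $M$. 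Hence $L$ is a submanifold of $W_D$ diffeomorphic to $M$, and because $\lambda_k$ restricts to zero on each core Lagrangian, $L$ is an exact Lagrangian. The claim is that $(W_D, L)$ is, up to completion, the pair $(T^*M, 0_M)$ consisting of the cotangent bundle and its zero section.

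First I would treat the base case: $h_0 = \mathbb{D}^n$ and $H_0$ is the standard Weinstein $0$-handle, whose completion is $(\mathbb{C}^n, \lambda_0) = T^*\mathbb{R}^n = \widehat{T^*\mathbb{D}^n}$, with $\iota_0(h_0)$ sent to the zero section. For the inductive step, suppose the partial domain $W_{D_{i-1}}$ built from the first $i$ handles has completion exact symplectomorphic to $T^*M_{i-1}$, where $M_{i-1} := \bigcup_{j<i} h_j$, via an exact symplectomorphism carrying $\bigcup_{j<i}\iota_j(h_j)$ to the zero section $0_{M_{i-1}}$. Under this identification the contact boundary $\partial W_{D_{i-1}}$ becomes the cosphere bundle $S^*M_{i-1}$, and $\Phi_i(\partial_L H_i)$ becomes a Legendrian $(k-1)$-sphere there, $k = \ind(h_i)$. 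The key external input is the standard fact that attaching a Weinstein $k$-handle to $T^*M_{i-1}$ along the unit conormal lift of a framed $(k-1)$-sphere $\Sigma \subset M_{i-1}$ produces the cotangent bundle $T^*(M_{i-1}\cup_\Sigma h^k)$ of the smoothly handle-attached manifold (see \cite{CE}). Combined with Weinstein's uniqueness of handle attachment from the pair $(\Lambda, CSN(\Lambda))$ and with Lemma~\ref{lemma Legendrian isotopy}, this reduces the inductive step to identifying $\Phi_i(\partial_L H_i)$, together with its $CSN$-trivialization furnished by Lemma~\ref{lemma attaching legendrian}, with the conormal lift of the smooth attaching sphere $\Sigma_i$, the image under $\phi_i$ of $S^{k-1}\times\{0\}\subset \partial_R h_i$, framed by the $\mathbb{D}^{n-k}$ directions.

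I expect this identification to be the main obstacle, and it has two parts. First, one must show that the boundary $\Phi_i(\partial_L H_i)$ of the $i$-th core Lagrangian is Legendrian isotopic in $S^*M_{i-1}$ to the unit conormal $\nu^*_1(\Sigma_i)$; this is plausible because $\partial_L H_i = \partial(\iota_i(h_i))$ abuts the zero section $0_{M_{i-1}}$ along $\Sigma_i$ under the inductive identification, and the conormal lift is the canonical Legendrian attached to a sphere lying in the zero section. Second, the trivialization of $CSN(\Phi_i(\partial_L H_i))$ induced by $\Phi_i$ must agree with the one coming from the conormal model, i.e.\ the normal framing of $\Sigma_i$ transported by $\phi_i$ must match the framing recorded by $D\phi_i$. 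The cleanest route to both is the explicit local model: compare $\lambda_k$ near the core Lagrangian of $H^k$ with the canonical form $p\,dq$ on $T^*(\mathbb{D}^k\times\mathbb{D}^{n-k})$, producing an exact symplectomorphism that sends the core to the zero section and $\partial_L H^k$ to the conormal of $S^{k-1}\times\{0\}$; naturality of this model under the gluing then propagates the framing statement.

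Finally, running the induction to $i=m$ yields $\widehat{W_D}\cong T^*M$ with $L$ the zero section, whence by \cite{CE} the symplectic completion of $W_D$ is $T^*M$, which is exactly the equivalence of Definition~\ref{def equivalence of Weinstein domains}. As a cross-check one may instead invoke the Weinstein Lagrangian neighborhood theorem to identify a neighborhood of the exact Lagrangian $L\cong M$ with a neighborhood of $0_M$ in $T^*M$, and argue that $W_D$ is a fiberwise disk thickening of $L$; this corroborates the conclusion but ultimately rests on the same framing computation flagged above.
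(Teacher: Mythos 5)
Your overall strategy --- a handle-by-handle induction that matches each Weinstein handle attachment in the construction of $W_D$ with a smooth handle attachment on the cotangent-bundle side --- is genuinely different from the paper's proof. The paper never identifies attachments one at a time: it shows, by a descending induction on the index, that the Lagrangian skeleton of $W_D$ is exactly the union of the core Lagrangians $\iota_i(h_i)$, hence a closed Lagrangian diffeomorphic to $M$, and then concludes with the Weinstein Lagrangian neighborhood theorem together with the fact that a Weinstein domain and a small neighborhood of its skeleton have symplectomorphic completions. In particular your closing remark has it backwards: the neighborhood-theorem route does not ``rest on the same framing computation'' --- it avoids that computation entirely, because the framing data is absorbed into the smooth structure of the skeleton $L\cong M$; what that route needs instead is the skeleton identification (that the Liouville flow of $W_D$ retracts it onto $L$), and that is where the paper's proof does its work.

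The genuine gap is in your inductive step. Since $M_{i-1}=\bigcup_{j<i}h_j$ has nonempty boundary, the contact boundary of the Weinstein domain version of $T^*M_{i-1}$ is \emph{not} the cosphere bundle $S^*M_{i-1}$: besides the cosphere part over the interior it contains a region lying over $\partial M_{i-1}$, and by the construction of $W_D$ --- the gluing maps $\Phi_i$ factor through $\phi_i:\partial_R h_i\to\partial M_{i-1}$ followed by the core-Lagrangian embeddings $\iota_j$ --- the handle $H_i$ is attached precisely in that region, inside a jet-space neighborhood $J^1(\partial M_{i-1})$ of the Legendrian $\partial M_{i-1}$ (the boundary of the zero section), not along any Legendrian in $S^*M_{i-1}$. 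Moreover $\Phi_i(\partial_L H_i)$ is not a ``Legendrian $(k-1)$-sphere'': it is an $(n-1)$-dimensional Legendrian diffeomorphic to $S^{k-1}\times\mathbb{D}^{n-k}$ (for $k<n$ the sphere $\Phi_i(\partial_S H_i)$ is only isotropic, not Legendrian), whereas the unit conormal $\nu_1^*(\Sigma_i)$ taken in $M_{i-1}$ is a closed $S^{n-k}$-bundle over $\Sigma_i$; these are not even diffeomorphic, so the Legendrian isotopy you ask for cannot exist. The identification you should aim for instead is: the attaching sphere is $\Sigma_i=\phi_i(S^{k-1}\times\{0\})$ itself, sitting inside the Legendrian $\partial M_{i-1}$, and the trivialization of $CSN$ supplied by Lemma \ref{lemma attaching legendrian} is the conormal framing of a tubular neighborhood of $\Sigma_i$ inside $\partial M_{i-1}$, i.e.\ the conormal lift taken in $J^1(\partial M_{i-1})$ rather than in $S^*M_{i-1}$; the external input then becomes the statement that attaching a Weinstein $k$-handle to the domain version of $T^*M_{i-1}$ with exactly this data produces the domain version of $T^*(M_{i-1}\cup_{\phi_i}h^k)$. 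With these corrections your induction can be carried through, and your local-model comparison of $\lambda_k$ with the canonical $1$-form is the right tool for it; but as written the key step targets the wrong Legendrian in the wrong contact manifold, so the proof as proposed would fail there.
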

\begin{proof}
	In order to prove Lemma \ref{lem handle decomposition of cotangent bundle}, we observe that the Lagrangian skeleta of $W_D$ is a smooth manifold $M$. 
	To be more precise, we note that $W_D$ admits a specific Liouville form induced from the Weinstein handle decomposition $\mathcal{D}$.
	According to the Liouville form, it is easy to observe that the corresponding Lagrangian skeleta of $W_D$ is the union of all core Lagrangians of handles in $W_D$, i.e., $M$. 
	Since the Lagrangian skeleta is a smooth manifold, $W_D$ is equivalent to the cotangent bundle of the Lagrangian skeleta, i.e., $T^*M$.  
\end{proof}

\subsection{Recovering Lagrangian skeleton}
\label{subsection recovering Lagrangian skeleta}
We start the section by recalling the notion of {\em arboreal Lagrangians} which was first defined in \cite{Nadler17}.
In the paper, we will use the definitions given in \cite{Alvarez-Eliashberg-Nadler23}.

\begin{definition}[Definition 1.1 of \cite{Alvarez-Eliashberg-Nadler23}]
	\label{def arboreal singularities}
	{\em Arboreal Lagrangian (resp.\ Legendrian) singularities} form the smallest class $\mathrm{Arb}_n^{symp}$ (resp.\ $\mathrm{Arb}_n^{cont}$) of germs of closed isotropic subsets in $2n$-dimensional symplectic (resp.\ $(2n+1)$-dimensional contact) manifolds such that the following properties are satisfied:
	\begin{enumerate}
		\item[(i)] (Invariance) $\mathrm{Arb}_n^{symp}$ is invariant with respect to symplectomorphisms and $\mathrm{Arb}_n^{cont}$ is invariant with respect to contactomorphisms. 
		\item[(ii)] (Base case) $\mathrm{Arb}_0^{symp}$ consists $pt = \mathbb{R}^0 \subset T^*\mathbb{R}^0 = pt$. 
		\item[(iii)] (Stabilizations) If $L \subset (X, \omega)$ is in $\mathrm{Arb}_n^{symp}$, then the product $L \times \mathbb{R} \subset (X \times T^*\mathbb{R}, \omega + dp \wedge dq)$ is in $\mathrm{Arb}_{n+1}^{symp}$. 
		\item[(iv)] (Legendrian lifts) If $L \subset T^*\mathbb{R}^n$ is in $\mathrm{Arb}_n^{symp}$, then its Legendrian lift $\widehat{L} \subset J^1\mathbb{R}^n$ is in $\mathrm{Arb}_n^{cont}$. 
		\item[(v)] (Liouville cones) Let $\Lambda_1, \dots, \Lambda_k \subset S^*\mathbb{R}^n$ be a finite disjoint union of arboreal Legendrian germs form $\mathrm{Arb}_{n-1}^{cont}$ centered at points $z_1, \dots, z_k \in S^*\mathbb{R}^n$. 
		Let $\pi: S^*\mathbb{R}^n \to \mathbb{R}^n$ be the front projection. 
		Suppose 
		\begin{itemize}
			\item $\pi(z_1) = \dots = \pi(z_k)$;
			\item For any $i$, and smooth submanifold $Y \subset \Lambda_i$, the restriction $\pi|_Y: Y \to \mathbb{R}^n$ is an embedding (or equivalently, an immersion, since we only consider germs);
			\item For any distinct $i_1, \dots, i_l$, and any smooth submanifolds $Y_{i_1} \subset \Lambda_{i_1}, \dots, Y_{i_l} \subset \Lambda_{i_l}$, the restriction $\pi_{Y_{i_1} \cup \dots \cup Y_{i_l}}: Y_{i_1} \cup \dots \cup Y_{i_l} \to \mathbb{R}^n$ is self-transverse. 
		\end{itemize}
		Then the union $\mathbb{R}^n \cup C(\Lambda_1) \cup \dots \cup C(\Lambda_k)$ of the Liouville cones with the zero-section form an arboreal Lagrangian germs form $\mathrm{Arb}_n^{symp}$. 
	\end{enumerate}
	With the above classes defined, we can also allow boundary by additionally taking the product $L \times \mathbb{R}_{\geq 0} \subset \left(X \times T^*\mathbb{R}, \omega +dp \wedge dq\right)$ for any arboreal Lagrangian $L \subset (X, \omega)$, and similarly for arboreal Legendrians. 
\end{definition}

One of the main results in \cite{Alvarez-Eliashberg-Nadler23} is to show that for fixed dimension $n$, the classes of arboreal singularities contain only finitely many local models up to symplectomorphisms or contactomorphisms.
Moreover, to each class in $\mathrm{Arb}_n^{symp}$, one can assign a {\em signed rooted tree} $\mathcal{T}$. 
\begin{theorem}[Theorem 1.2 of \cite{Alvarez-Eliashberg-Nadler23}]
	\label{thm arboreal singularity}
	If two arboreal Lagrangian singularities $L \subset (X, \omega), L' \subset (X', \omega')$ of the class $\mathrm{Arb}_n^{symp}$ have the same dimension and signed rooted tree $\mathcal{T}$, then there is (the germ of) a symplectomorphism $(X, \omega) \simeq (X', \omega')$ identifying $L$ and $L'$.
\end{theorem}

A Lagrangian subset, possibly with singularities, is called {\em arboreal Lagrangian} if its singularities are arboreal in the sense of Definition \ref{def arboreal singularities}. 
\begin{definition}[Definition 3.1 of \cite{Alvarez-Eliashberg-Nadler23}]
	\label{def arboreal Lagrangian}
	A closed subset $L \subset X$ of a $2n$-dimensional symplectic manifold $(X, \omega)$ is called an {\em arboreal Lagrangian} if the germ of $(X, L)$ at any point $\lambda \in L$ is symplectomorphic to the germ of the pair $(T^*\mathbb{R}^m \times T^*\mathbb{R}^{n-m}, L_\mathcal{T} \times \mathbb{R}^{n-m})$ at the origin, for a signed rooted tree $\mathcal{T}$ having $(m+1)$-many vertices (including a root and $m \leq n$). 
\end{definition}
We refer the reader to \cite[Section 2.2]{Alvarez-Eliashberg-Nadler23} for the local models assigned to a signed rooted tree $\mathcal{T}$. 

Now, we introduce some technical definitions that we need to prove Theorem \ref{thm rough statement}. 
\begin{definition}
	\label{def recovering skeleton} 
	Let $W$ be a Weinstein manifold of dimension $2n$, equipped with a Lagrangian skeleton $\mathbb{L}$. 
	The Lagrangian skeleton is {\em recovering} if the following hold:
	\begin{enumerate}
		\item[(i)] (Global condition) $\mathbb{L}$ is a disjoint union of smooth Lagrangians $\{L_0, \dots, L_k\}$ such that 
		\begin{itemize}
			\item $L_0$ is a closed Lagrangian without boundary; 
			\item For all $i =1, \dots, k$, the boundary of the closure $\overline{L}_i$ should be contained in $\cup_{j=0}^{i-1} L_i$, i.e., $\partial \overline{L}_i \subset \cup_{j=0}^{i-1} L_j$.  
		\end{itemize}
		\item[(ii)] (Local condition) $\mathbb{L}$ is an arboreal Lagrangian such that the germ of $(W, \mathbb{L})$ at any point $\lambda \in \mathbb{L}$ is symplectomorphic to the germ of the pair $(T^*\mathbb{R}^m \times T^*\mathbb{R}^{n-m}, L_\mathcal{T} \times \mathbb{R}^{n-m})$ at the origin, for a positively rooted $A_m$-tree $\mathcal{T}$ having  $(m+1)$-many vertices (including a root and $m \geq n$).
	\end{enumerate}
\end{definition}

\begin{remark}
	\label{rmk recovering skeleton}
	\mbox{}
	\begin{enumerate}
		\item We first note that if $\mathbb{L} = \cup_{i=0}^k L_i$ is a recovering Lagrangian skeleton of a Weinstein manifold $W$, then one can {\em recover} $W$ by gluing cotangent bundles $\left\{T^*\widetilde{L}_i | i = 0, \dots, k\right\}$ where $\widetilde{L}_j$ is defined to be the complement of a small neighborhood of $\partial \overline{L}_i$ from $\overline{L}_i$. 
		Recall that $\overline{L}_i$ could be a Lagrangian with {\em corners}, but $\widetilde{L}_i$ could be a manifold with boundary, but not corner. 
		Then, one can easily see that $T^*\widetilde{L}_i$ is a Weinstein sector. 
		Moreover, a small neighborhood of $\mathbb{L}$ in $W$, which is equivalent to $W$, can be obtained by gluing $T^*\widetilde{L}_i$s along Weinstein hypersurfaces {\em inductively}. 
		See \cite{Avdek21, Eliashberg18} for the gluing procedure.
		
		More precisely, we could obtain a Weinstein manifold $\cup_{j=0}^i T^*\widetilde{L}_j$ {\em inductively}. 
		The base step is $T^*L_0$ since $L_0$ is a closed Lagrangian. 
		Using the local condition of Definition \ref{def recovering skeleton} and the condition that $\partial \overline{L}_{i+1} \subset \cup_{j=0}^{i-1} L_j$, one can define a Legendrian $\Lambda_{i+1}$ of the asymptotic contact manifold of $\cup_{j=0}^i T^*\widetilde{L}_j$ as follows: 
		For every $\lambda \in \partial \overline{L}_{i+1}$, we have a local coordinate modeled by a singed rooted tree $A_m$ for some $m$. 
		The local model gives us a positive conormal direction at every $\lambda \in \partial \overline{L}_{i+1}$, and the lift of $\partial \overline{L}_{i+1}$ in the conormal direction will define a Legendrian $\Lambda_{i+1}$. 
		Then, one can glue $T^*\widetilde{L}_{i+1}$ to $\cup_{j=0}^i T^*\widetilde{L}_j$ along $\Lambda_{i+1}$. 
		
		\item The above item (1) explains the geometric meaning of Definition \ref{def recovering skeleton} (i) (Global condition). 
		The meaning of the second condition, i.e., local condition, is that one can use the local model to get an exact Lagrangian smoothing of a Lagrangian $A_m$-singularities. 
		
		We sketch below how one could get a smoothed version, and for details, see \cite[Section 2]{Alvarez-Eliashberg-Nadler20}.
		An (possibly singular) arboreal singularity of $A_m$-type lives in $T^*\mathbb{R}^m$.
		It has a Legendrian boundary on the asymptotic boundary of $T^*\mathbb{R}^m$.
		One can consider the Legendrian front projection and its a regular neighborhood in $\mathbb{R}^{m+1}$.
		The front projection is a hypersurface in $\mathbb{R}^{m+1}$, and one can lift the hypersurface to a smooth Legendrian in $\mathbb{R}^{2m+1}$. 
		It has a Lagrangian projection in $\mathbb{R}^{2m} = T^*\mathbb{R}^m$ that serves as a smoothed version of Lagrangian $A_m$-singularity. 
		
		\item We note that in Definition \ref{def recovering skeleton}, we require $\mathbb{L}$ to have a singularity of a {\em specific} type, i.e., positively rooted $A_m$-types. 
		Even if $\mathbb{L}$ have singularities of other types, we could recover $W$ from $\mathbb{L}$ as we did in (1) if we have a corresponding {\em conormal} direction.  
		But we only allow the simplest type, i.e., $A_m$-type, since we only need the type in the proof of Theorem \ref{thm rough statement}.
	\end{enumerate}
\end{remark}

In Remark \ref{rmk recovering skeleton} (1), we come up with a Legendrian $\Lambda_{i+1}$ form a subset $\partial \overline{L}_{i+1} \subset \cup_{j=0}^{i-1} L_j$ in order to attach $T^*\widetilde{L}_{i+1}$ to $\cup_{j=0}^i T^*\widetilde{L}_j$.
In the proof of Theorem \ref{thm rough statement}, we will attach a Weinstein sector to a Weinstein manifold equipped with a recovering skeleton.
Similar to Remark \ref{rmk recovering skeleton} (1), we will use a subset of the recovering skeleton, which satisfies the following condition:
\begin{definition}
	\label{def transversally embedded} 
	Let $W$ be a Weinstein manifold of dimension $2n$, equipped with a recovering Lagrangian skeleton $\mathbb{L} = \cup_{i=0}^k L_i$. 
	\begin{enumerate}
		\item A subset $S$ of $\mathbb{L}$ is an {\em embedded submanifold (of codimension $r$)} if there exists an injective continuous map from a manifold of dimension $(n-r)$ whose image is $S$. 
		In other words, there exists an injective continuous map 
		\[\iota: M^{n-r} \to \mathbb{L}\]
		from an $(n-r)$-dimensional manifold $M$, such that $\iota(M) = S$. 
		\item An embedded submanifold $S \subset \mathbb{L}$ of codimension $r$ is said to be {\em transversally embedded}, if for any $I_0 \subset \{0, \dots, k\}$, the intersection of $S$ and $L(I_0):= \cap_{i \in I_0} \overline{L}_i$ is a submanifold of $L(I_0)$ such that the dimension of submanifold $S \cap L(I_0)$ is $\dim L(I_0) -r$. 
	\end{enumerate}
\end{definition}
We note that the notion of embedded submanifold uses the word ``submanifold'' incorrectly (in a rigorous sense), since an embedded submanifold $S$ is not necessarily to be a smooth manifold. 
More precisely, it could be a subset that is {\em homeomorphic} to a manifold, without a manifold structure. 
We note also that $L(I_0)$ is a submanifold of $L_{i_0}$ where $i_0$ is the minimal element of $I_0$ because of the local condition in Definition \ref{def recovering skeleton}. 
The dimension of $L(I_0)$ is $(n - |I_0|)$ where $|I_0|$ means the cardinal of $I_0$. 

For later use, we prove the following lemma:
\begin{lemma}
	\label{lem disk bundle}
	Let $W$ be a Weinstein manifold of dimension $2n$, equipped with a recovering Lagrangian skeleton $\mathbb{L}$. 
	Let $S$ be a transversally embedded submanifold of positive codimension $r > 0$ so that there exists an injective map $\iota: M^{n-r} \to \mathbb{L}$. 
	Then, there exists an extension of $\iota$, denoted by $\iota$ again,
	\[\iota: \mathbb{D}^r \times M \to \mathbb{L},\] 
	satisfying the following:
	\begin{enumerate}
		\item The extended $\iota$ is also a continuous injective map and $\iota(\mathbf{0}, x) = \iota(x)$ where $\mathbf{0}$ is the origin of the disk $\mathbb{D}^r$ and $x \in M$. 
		\item For any $x \in M$ and $p \in \mathbb{D}^r$, if $\iota(x) \in L(I_0)$, then $\iota(p,x) \in L(I_0)$. 
		We also note that if $|I_0| = m+1$, then near $x \in L(I_0)$, $\mathbb{L}$ is locally modeled by $L_{\mathcal{A}_{m+1}} \times \mathbb{R}^{n-m}$ where $L_{\mathcal{A}_{m+1}}$ is a local model in $T^*\mathbb{R}^m$ by a positively rooted $A_{m+1}$-tree. 
		Moreover, $L(I_0)$ is locally modeled by $\mathbf{0} \times \mathbb{R}^{n-m}$. 
		Then, $D \iota \partial p$ is tangential to $\mathbf{0} \times \mathbb{R}^{n-m}$, where $\partial p$ denotes a tangent vector of $\mathbb{D}^r \times M$ in a $\mathbb{D}^r$-direction.
	\end{enumerate}
\end{lemma}
\begin{proof}
	Let us fix a metric $g$ on $W$. 
	Then, by choosing a small neighborhood of $S$ in $\mathbb{L}$, one can find a normal disk bundle of $S$ satisfying the conditions of Lemma \ref{lem disk bundle}.
	We note that every fiber of the disk bundle is $\mathbb{D}^r$ because $S$ is a transversally embedded submanifold of $\mathbb{L}$ having codimension $r$. 
\end{proof}

\begin{remark}
	\label{rmk transversally embedded}
	We note that if one restrict $\iota$ in Lemma \ref{lem disk bundle} to the boundary of $\partial \mathbb{D}^r \times M \simeq S^{r-1} \times M$, the restriction of $\iota$ provides a transversally embedded submanifold of $\mathbb{L}$ of {\em codimension $1$}. 
	Since it is codimension $1$, one can specify a {\em positive conormal} direction along the embedded submanifold, so that the union of $\mathbb{L}$ with the positive conormal of the embedded submanifold of codimension $1$ is an arboreal Lagrangian whose singularities are of $A_k$-types for some $k$. 
	We will use this in the proof of Theorem \ref{thm rough statement} in Section \ref{section the proof of Theorem main}.
\end{remark}

\section{The idea}
\label{section the idea}
Now, we state a technical statement of Theorem \ref{thm rough statement}.
The statement will be given in Section \ref{subsect technical statement}. 
Before that, we explain the main idea in Section \ref{subsection the main idea}. 

\subsection{The main idea}
\label{subsection the main idea}
Section \ref{subsubsection idea of the first theorem} briefly explained the main idea of Theorem \ref{thm rough statement}.
We review the idea in Section \ref{section the idea} with more details.  

The idea is motivated by the fact that the following two induce a Weinstein handle decomposition of $W$: 
\begin{itemize}
	\item The first one is a Lefschetz fibration $\pi : W \to \mathbb{C}$, and 
	\item the other is a Weinstein handle decomposition of a regular fiber of $\pi$.
\end{itemize}

We would like to go through the other direction.
In other words, we would like to construct a Lefschetz fibration of $W$ from a Weinstein handle decomposition of $W$. 
However, it is easy to find a counter example, i.e., a Weinstein handle decomposition {\em cannot} produce a Lefschetz fibration. 
We will give an example in Section \ref{subsect example : the case of T^*S^n}. 

Reminding the existence of counter examples, we look over necessary conditions first.
If a Weinstein handle decomposition $\mathcal{H}$ can produce a Lefschetz fibration, one can construct the Lefschetz fibration through the following two steps:
\begin{enumerate}
	\item[(i)] First, one can consider the union of all subcritical handles in $\mathcal{H}$.
	Let $W_{sub}$ denote the union of all subcritical handles. 
	Then, one can find a codimension 2 Weinstein manifold $F$ such that 
	\[W_{sub} \simeq F \times \mathbb{C},\]
	where $\mathbb{C}$ admits the standard radial structure, i.e., its symplectic $2$-form and Liouville $1$-form are given as $dx \wedge dy$ and $\tfrac{1}{2}\left(xdy - ydx\right)$, respectively. 
	We will simply use $F \times \mathbb{C}$ in order to indicate the subcritical part in the rest of the section.
	We note that $\simeq$ symbol means {\em Weinstein homotopic} between both sides. 
	\item [(ii)] Then, the critical handles in $\mathcal{H}$ should {\em respect} the product structure $W_{sub} \simeq F \times \mathbb{C}$.
	To be more precise, let $\{\Lambda_i\}_{i \in I}$ be the set of attaching Legendrian spheres of the critical handles and let $pr_2 : F \times \mathbb{C} \to \mathbb{C}$ be the projection map. 
	Then, $pr_2(\{\Lambda_i\}_{i \in I})$ should be a collection of pairwise disjoint intervals contained in $\partial_\infty \mathbb{C}$. 
	Moreover, the fiber $F$ must have a collection of exact Lagrangian spheres $\{L_i\}_{i \in I}$ such that, on the asymptotic boundary of $F \times \mathbb{C}$, $\Lambda_i$ and 
	\[\left\{\left(p, \theta_i(p)\right) | p \in \times \{\theta_i\} \right\}\]
	are Legendrian isotopic where $\theta_i : L_i \to S^1$ is a function.
	(More precisely, $\theta_i$ is obtained from the primitive function of the exact Lagrangian sphere $L_i \in F$.
	See Section \ref{subsubsection some contact topology} and Definition \ref{def legendrian lift} for more details.)
	In other words, $L_i$ is the vanishing cycle of the singular value corresponding to the critical handle. 
\end{enumerate}
We note that every Weinstein handle decomposition satisfies (i) by \cite{CE, Cieliebak}. 
Thus, in order to construct a Lefschetz fibration, it is enough to find a Weinstein handle decomposition passing (ii).

The next question is how to find the Lagrangian sphere $L_i \subset F$ for each critical handle. 
To answer the question, we observe how one can construct a Lagrangian skeleton of $W$.
The Lagrangian skeleton of $W$ corresponding to $\mathcal{H}$ is obtained by attaching the {\em closures of stable manifolds of the zeros in critical handles} to {\em the Lagrangian skeleton of $F$}.
We note that the stable manifolds of critical handles are defined by the corresponding Liouville structure on $W$, and the stable manifold of each critical handle should be an open disk of dimension $n$. 
In other words, if one has the collection of attaching maps, i.e., 
\begin{gather}
	\label{eqn attaching map 2}
	\text{Att}_i: S^{n-1} = \text{the boundary of the closure of the stable manifold of each critical handle} \to \text{the skeleton of  } F,
\end{gather}
then one can recover the skeleton of $W$. 

We expect that, if the maps $\text{Att}_i$ are {\em injective} maps, then by smoothing the images of $\text{Att}_i$ (note that the images could intersect singularities of the skeleton of $F$), one can obtain a Lagrangian sphere $L_i$ satisfying the above conditions. 
Based on this idea, we define the notion of {\em injective Weinstein handle decomposition}.
\begin{definition}
	\label{def injectiveness}
	For simplicity, we say that a Weinstein handle decomposition $\mathcal{H}$ is {\em injective}, if the maps $\text{Att}_i$ in \eqref{eqn attaching map 2} are injective for all critical handles in $\mathcal{H}$.
\end{definition}

When we have an injective Weinstein handle decomposition, then the expected Lefschetz fibration has the fiber given from the subcritical handles and vanishing cycles that are smoothed Lagrangian sphere $L_i$ mentioned above. 

To prove Theorem \ref{thm rough statement}, we first construct an injective Weinstein handle decomposition of a cotangent bundle. 
To do that, we recall that Lemma \ref{lem handle decomposition of cotangent bundle} gives an algorithm generating a Weinstein handle decomposition $W_D$ of a cotangent bundle $T^*M$ from a handle decomposition $D$ of the zero section $M$. 
One can easily check that if $D$ satisfies the property $(\star)$ below, then $W_D$ is injective. 

In order to describe the property $(\star)$, we note that there exists a Morse function on $M$ corresponding to $D$, or more clearly, there exists a Morse function such that the handle decomposition of $M$ induced from the Morse function is $D$. 
Each handles in $D$ has a unique zero of the gradient flow of the Morse function. 
Then, $M$ can be written as a union of stable manifolds (with respect to the gradient flow) of the zeros in handles of index $n$.
The property $(\star)$ can be stated as follows: 

\begin{itemize}
	\item[$(\star)$] For any handle of index $n$ in $D$, the closure of the corresponding stable manifold is homeomorphic to a disk $\mathbb{D}^n$. 
\end{itemize}

\begin{exmp}
	\label{Example 1}
	To find an example of $D$ such that $D$ does not satisfy $(\star)$, let us assume that a handle decomposition $D$ of $M$ has only one handle of index $n$. 
	Then, the closure of the stable manifold corresponding to the unique index $n$-handle should be $M$. 
	If $M$ is not a disk, then $D$ cannot satisfy $(\star)$. 
\end{exmp}

In order to find a handle decomposition of $M$ satisfying $(\star)$, we consider the following strategy. 
First, we start with any handle decomposition $D$ of $M$.
Then, we add a canceling pair of indices $(n-1, n)$-handles for each handles of index $< n$. 
The details will be given in Section \ref{subsect technical statement} with Figure \ref{figure example dividing a subscritical handle}. 
Let $\tilde{D}$ be the new handle decomposition of $M$.
Then, $\tilde{D}$ satisfies $(\star)$.  

Again, Lemma \ref{lem handle decomposition of cotangent bundle} gives an injective Weinstein handle decomposition $W_{\tilde{D}}$ from $\tilde{D}$. 
The left is to show that $W_{\tilde{D}}$ passes two steps (i) and (ii). 
In order to show this, we use Legendrian isotopies of attaching Legendrians of Weinstein handles in $\tilde{\mathcal{D}}$. 
These are main contents of Sections \ref{section the proof of Theorem main} and \ref{section examples}.

\subsection{Technical statement of Theorem \ref{thm rough statement}}
\label{subsect technical statement}
Now, we are ready to state the technical statement of Theorem \ref{thm rough statement} based on the idea explained in Section \ref{subsection the main idea}.
First, we state an algorithm $\mathcal{A}$ that produces an injective Weinstein handle decomposition of $T^*M$ from any handle decomposition of $M$.

We fix notations first. 
Let $D = \{h_0, \dots, h_m\}$ be a handle decomposition of an $n$-dimensional manifold $M$.
By Definition \ref{def handle decomposition}, there is a natural number $N$ such that $h_i$ is of index $<n$ (resp.\ $n$) if $i \leq N$ (resp.\ $i > N$).
Lemma \ref{lem handle decomposition of cotangent bundle} constructs a Weinstein handle decomposition of $T^*M$ from $D$.  
By abuse of notation, let $W_D$ denote the Weinstein handle decomposition of $T^*M$.

The algorithm $\mathcal{A}$ consists of two steps. 
The first step is to construct another handle decomposition $\tilde{D}$ of $M$ from $D$, and the second step is to apply Lemma \ref{lem handle decomposition of cotangent bundle} to $\tilde{D}$. 
\vskip0.2in

\noindent{\em Step 1.}
The first step of $\mathcal{A}$ is to add a canceling pair of index $(n-1, n)$-handles for each of handles of index $<n$ in $D$.
By adding a canceling pair, one can imagine that $h_i$ is divided into three handles.
To be more precise, let $h_i^{ori}$ denote the handle of index $\ind(h_i)$, and let $h_i^{n-1}$ (resp.\ $h_i^n$) denote the handle of index $(n-1)$(resp.\ $n$)-handle in the added canceling pair. 
Then, we would like to say that 
\[h_i = h_i^{ori} \cup h_i^{n-1} \cup h_i^n \simeq \mathbb{D}^k \times \mathbb{D}^{n-k}, h_i^n \simeq \mathbb{D}^k \times \mathbb{D}^{n-k}_\epsilon,\]
where $k$ is the index of $h_i$ and $\mathbb{D}^{n-k}_\epsilon$ is the $(n-k)$-dimensional disk with radius $\epsilon < \tfrac{1}{2}$.
See Figure \ref{figure example dividing a subscritical handle}.

More precisely, we can say that $h_i^{ori}, h_i^{n-1}, h_i^n$ satisfy the following:
\begin{enumerate}
	\item[(i)] We note that for $k := \ind(h_i)$, there is a map $f$ such that $\partial_R h \stackrel{f}{\simeq} S^{k-1} \times \mathbb{D}^{n-k}$.
	Then, $f(\partial_R h_i \cap \partial_R h_i^n) \simeq S^{k-1} \times \mathbb{D}_{\epsilon}^{n-k}$, where $\mathbb{D}_{\epsilon}^{n-k}$ is a smaller disk with a radius $\epsilon <1$;
	\item[(ii)] $\partial h_i \setminus \partial_R h_i$ does not intersect $h_i^n$. 
\end{enumerate}
An example for $3$-dimensional $1$-handle is given in Figure \ref{figure example dividing a subscritical handle}.

\begin{figure}[h]
	\centering
	\input{example_dividing_a_subcritical_handle.txt}		
	\caption{The left is a $3$-dimensional $1$-handle $h$, and the right is a division of $h$ into a $3$-handle $h^3$ (red), a $2$-handle $h^2$ (blue), and the other $1$-handle $h^{ori}$ (complement of red and blue). One can observe that the red and blue handles are in a canceling pair.}
	\label{figure example dividing a subscritical handle}
\end{figure} 

\begin{remark}
	\label{rmk the choice of order}
	We note that if $\ind(h_i) = n-1$, then there are two $(n-1)$-handles after dividing. 
	Thus, in order to use the notation $h_i^{ori}$ and $h_i^{n-1}$, it is necessarily to choose one of two possibilities. 
	However, at the end, the choice does not affect on the resulting Lefschetz fibration.
	We ignore this issue in the rest of the paper for convenience.
\end{remark}

After dividing all subcritical handles in $D$, one obtains another handle decomposition $\tilde{D}$ of $M$ such that
\[\tilde{D} := \{h_0^{ori}, h_0^{n-1}, h_1^{ori}, h_1^{n-1}, \dots, h_N^{ori}, h_N^{n-1}, h_0^{n}, \dots, h_N^{n}, h_{N+1}, \dots, h_m\}.\]
We note that $\tilde{D}$ consists of $(2N+2)$-many handles of index $<n$ and $(m+1)$-many handles of index $n$.
\vskip0.2in

\noindent{\em Step 2.}
The second step of the algorithm is to apply Lemma \ref{lem handle decomposition of cotangent bundle} for $\tilde{D}$. 
Then, one obtains a Weinstein handle decomposition $W_{\tilde{D}}$ of $T^*M$. 
By abuse of notation, let
\[W_{\tilde{D}} = \{H_0^{ori}, H_0^{n-1}, H_1^{ori}, H_1^{n-1}, \dots, H_N^{ori}, H_N^{n-1}, H_0^{n}, \dots, H_N^{n}, H_{N+1}, \dots, H_m\}.\]
We remark that there is an one-to-one relation between the handles in $\tilde{D}$ and Weinstein handles in $W_{\tilde{D}}$.
In the above, $H_i^{ori}, H_i^{n-1}, H_i^n$ for $i \leq N$ and $H_j$ for $j > N$ correspond to $h_i^{ori}, h_i^{n-1}, h_i^n$ and $h_j$ by the one to one relation.

One can easily check that for any handle decomposition $D$ of $M$, $W_{\tilde{D}}$ is an injective Weinstein handle decomposition from the viewpoint of Definition \ref{def injectiveness}. 
Theorem \ref{thm main} claims that $W_{\tilde{D}}$ produces a Lefschetz fibration. 

\begin{theorem}[=Theorem \ref{thm rough statement}]
	\label{thm main}
	Let $M$ be a smooth manifold of dimension $n$.
	For any handle decomposition $D$ of $M$, let $W_{\tilde{D}}$ be the Weinstein handle decomposition of $T^*M$ obtained by applying the above algorithm $\mathcal{A}$ to $D$.
	Then, one can produce a Lefschetz fibration from $W_{\tilde{D}}$.  
\end{theorem}

\begin{remark}\mbox{}
	\label{rmk M_0}
	\begin{enumerate}
		\item We note that the regular fiber $F$ of the resulting Lefschetz fibration is determined by the subcritical handles in $W_{\tilde{D}}$. 
		Since every subcritical handle in $W_{\tilde{D}}$ arises from a handle of index $<n$ in $D$, the handles of index $<n$ in $D$ determine $F$.
		
		For more detail, let $M_{sub}:= \cup_{i=0}^N \partial h_i$. 
		We note that $M_{sub}$ is not a smooth manifold, but a manifold with corners. 
		One can easily see that a tubular neighborhood of $M_{sub}$ in $T^*M$ is the union of all subcritical handle of index $<n$ in $W_{\tilde{D}}$. 
		Thus, the neighborhood of $M_{sub}$ is equivalent to $F \times \mathbb{C}$.
		\item The number of critical handles in $W_{\tilde{D}}$ is the same as the number of all handles in $D$. 
		Thus, the number of singular values of the resulting Lefschetz fibration is also the same as the number of handles in $D$.  
		This gives an upper bound of the minimal number of singular values over the set of Lefschetz fibrations on a cotangent bundle.
	\end{enumerate}
\end{remark}

\subsection{A counter example}
\label{subsect example : the case of T^*S^n}
Now, we give an example of Weinstein handle decomposition that cannot produce a Lefschetz fibration. 

It is easy to prove that $T^*S^n$ admits a Weinstein handle decomposition consisting of one Weinstein $0$-handle and one Weinstein $n$-handle. 
It is because $S^n$ admits a decomposition into one $0$-handle and one $n$-handle. 
Then, Lemma \ref{lem handle decomposition of cotangent bundle} gives the desired Weinstein handle decomposition of $T^*S^n$.

Let us assume that the Weinstein handle decomposition produces a Lefschetz fibration $\pi$. 
Then, the regular fiber $F$ should be $\mathbb{D}^{2n-2}$ since the only subcritical handle is the zero handle, i.e., 
\[F \times \mathbb{C}^2 \simeq \text{  the Weinstein $0$-handle  } \simeq \mathbb{D}^{2n}.\]

Since the Weinstein handle decomposition has one critical handle, the Lefschetz fibration $\pi$ has one critical value. 
Let $L$ be the vanishing cycle corresponding to the critical value.
Then, $L$ should be an exact Lagrangian submanifold of $F$.
However, it is well-known that there is no exact Lagrangian in $\mathbb{D}^{2n-2}$. 
Thus, it is a contradiction. 

\begin{remark}
	\label{rmk number of critical values}
	From the above arguments, one can conclude that every Lefschetz fibration of $T^*S^n$ has at least $2$ or more critical values. 
	Since there exists a well-known Lefschetz fibration of $T^*S^n$ having exactly $2$ critical values, $2$ is the minimal number of critical values of a Lefschetz fibration of $T^*S^n$.
	
	Moreover, the same arguments work for the case of Milnor fibers of $A_n$-singularities.
	As the result of the same arguments, any Lefschetz fibrations of those Milnor fibers have at least $(n+1)$ critical values. 
	Since there is a well-known Lefschetz fibration of the Milnor fiber with exactly $(n+1)$ singular values, $(n+1)$ is the minimum number of singular values.
\end{remark}

\section{Proof of Theorem \ref{thm main}}
\label{section the proof of Theorem main}

In the section, we prove Theorem \ref{thm main}. 
More precisely, we will construct a Lefschetz fibration in an {\em inductive} manner. 
The first subsection will give an induction hypotheses, and the second subsection will prove the base and the final steps of the construction.
Since the proof of the induction step is complicated, we first give the sketch in Section \ref{subsection sketch of the induction step}, we prepare the full proof in Section \ref{subsection preparations for Legendrian isotopy}, and the full proof will be given in Section \ref{subsection Legendrian isotopy}.

We note that in Section \ref{section examples}, we will give an explicit example of the inductive construction with figures. 
It could be a good idea to skip the complicated part of the abstract proof, i.e., the induction step, read the example section first, and come back to the present section.

\subsection{Induction hypotheses}
\label{subsection induction hypotheses}
First, we note that as we used in the previous subsection, we use the same notation
\begin{gather*}
	D = \{h_0, \dots, h_m\}, \\
	\tilde{D} := \{h_0^{ori}, h_0^{n-1}, h_1^{ori}, h_1^{n-1}, \dots, h_N^{ori}, h_N^{n-1}, h_0^{n}, \dots, h_N^{n}, h_{N+1}, \dots, h_m\}, \\
	W_{\tilde{D}} = \{H_0^{ori}, H_0^{n-1}, H_1^{ori}, H_1^{n-1}, \dots, H_N^{ori}, H_N^{n-1}, H_0^{n}, \dots, H_N^{n}, H_{N+1}, \dots, H_m\}.
\end{gather*}
Moreover, let 
\[M_0 \subset M_1 \subset \dots \subset M_N \subset M_{N+1} = M\]
be an increasing sequence of closed subsets defined as 
\begin{gather}
	\label{eqn def of M_i}
	M_i := \cup_{j=0}^i \big(h_j^{ori} \bigcup h_j^{n-1}\big), \text{  if  } i \leq N, \text{  and  } M_{N+1} := M.
\end{gather}
Similarly, we also define 
\[\overline{M}_i = \cup_{j=0}^i h_j \text{  for all  } i \leq N.\]
Then, it is easy to observe that 
\begin{gather}
	\label{eqn indcution hypotheses}
	\overline{M}_i = M_i \cup \left(\bigcup_{j=0}^i h_i^n\right), \partial M_i = \partial \overline{M}_i \cup \left(\bigcup_{j=0}^n \partial h_i^n\right) \text{  for all  } i \leq N.
\end{gather}

Our strategy is to find $F_i$ such that, for all $i \leq N$,
\begin{gather}
	\label{eqn induction hypothesis}
	T^*M_i \simeq F_i \times \mathbb{C},
\end{gather}
i.e., the product Weinstein manifold $F_i \times \mathbb{C}$ equipped with the product Weinstein structure is Weinstein homotopic to $T^*M_i$ equipped with the standard cotangent bundle Weinstein structure, and moreover, satisfies the following induction hypotheses: 
\begin{enumerate}
	\item[(IH 1)] The skeleton of $F_i$, $\mathrm{Skel}(F_i)$, is a recovering Lagrangian skeleton. 
	For the definition of recovering skeleton, see Definition \ref{def recovering skeleton}.
	Especially, there exists a collection of Lagrangians $\{L_0, \dots, L_i\}$ such that 
	\[\mathrm{Skel}(F_i)= \cup_{j=0}^i L_j.\]
	Moreover, we also require that $F_i$ is obtained by attaching a Weinstein sector $T^*\widetilde{L}_i$ to $F_{i-1}$ satisfying that $\mathrm{Skel}(F_{i-1}) = \cup_{j=0}^{i-1}L_j$.
	For the attachment of $T^*L_i$ to $F_{i-1}$, see Remark \ref{rmk recovering skeleton};
	\item[(IH 2)] We note that as seen in Equation \eqref{eqn indcution hypotheses}, $\partial h_j^n \subset \partial M_i$ for all $0 \leq j \leq i$. 
	Since $T^*M_i \simeq F_i \times \mathbb{C}$, $\partial h_j^n \subset \partial_\infty \left(F_i \times \mathbb{C}\right)$, where $\partial_\infty$ means the asymptotic boundary.
	We note that $\partial_\infty \left(F_i \times \mathbb{C}\right)$ contains $F_i \times \partial_\infty \mathbb{C} = F_i \times S^1$. 
	The {\em second induction hypothesis} is that as a subset of $\partial_\infty \left(F_i \times \mathbb{C}\right)$, 
	\[\partial h_j^n \subset F_i \times \left[\pi-j\theta_0, \pi-(j-1) \theta_0\right),\]
	for a constant $\theta_0>0$ satisfying $N \theta_0 < \pi$.
	We note that $\left[\pi-j\theta_0, \pi-(j-1) \theta_0\right) \subset \mathbb{R}/\mathbb{Z} \simeq S^1$. 
	\item[(IH 3)] Similar to (IH 2), we note that $\partial \overline{M}_i \subset \partial M_i \subset \partial_\infty \left(F_i \times \mathbb{C}\right)$.
	For the same $\theta_0$ in (IH 2), 
	\[\partial \overline{M}_i \subset F_i \times [-i \theta_0, -(i-1)\theta_0).\]
	We note that $[-i \theta_0, -(i-1)\theta_0) \subset \mathbb{R}/(2 \pi \mathbb{Z}) \simeq S^1$;
	\item[(IH 4)] We would like to remind that 
	\[\partial_R h_{i+1} \subset \partial \overline{M}_i \subset F_i \times [-i \theta_0, -(i-1)\theta_0).\]
	It is possible to attach a Weinstein handle of index $k =\mathrm{ind} (h_{i+1})$ to $F_i \times \mathbb{C}$ along the attaching sphere of $h_{i+1}$. 
	After the attachment, the attached subcritical handle induces a stable manifold that is homeomorphic to a $k$-dimensional open disk. 
	Then, the resulting manifold admits the Lagrangian skeleton obtained by attaching $\mathbb{D}^k$ to $\mathrm{Skel}(F_i)$.
	The attachment can be encoded by an attaching map 
	\[A_{i+1}: \partial \mathbb{D}^k \simeq S^{k-1} \to \mathrm{Skel}(F_i).\]
	The {\em fourth induction hypothesis} is that $A_{i+1}$ is a transversally embedding map and so that the image $A_{i+1}(S^{k-1})$ is a transversally embedded submanifold of the skeleton of $F_i$. 
	See Definition \ref{def transversally embedded} for the definition of transversal embedded subset of a recovering skeleton. 
	\item[(IH 5)] For every $0 \leq j \leq i$, there exists an exact Lagrangian $V_j \subset F_i$ such that a Legendrian lift (see Definition \ref{def legendrian lift}) of $V_j$ is Legendrian isotopic to $\partial h_j^n$.
	\item[(IH 6)] There exists an exact Lagrangian $\overline{V}_{i+1} \subset F_i$ such that $\partial \overline{M}_i$ is a Legendrian lift of $\overline{V}_{i+1} \subset \mathrm{Skel}(F_i)$. 
\end{enumerate}

\begin{remark}
	\label{rmk product structure}
	\mbox{}
	\begin{enumerate}
		\item Before going further, we would like to set more notations, and would like to remark the following: 
		Let 
		\[W_i := F_i \times \mathbb{C}.\]
		Then, $W_i$ admits a product Lefschetz fibration 
		\[\pi_i : W_i = F_i \times \mathbb{C} \to \mathbb{C}.\]
		We note that the symplectic completion of $\mathbb{D}^2$ is $\mathbb{C}$. 
		Based on this, we abuse notation as follows: 
		We replace $\mathbb{C}$ with $\mathbb{D}^2$. 
		Similarly, we say that $W_i = F_i \times \mathbb{D}^2$.and that the target space of a product Lefschetz fibration $\pi_i$ is $\mathbb{D}^2$.   
		In a similar manner, we replace $F_i$ with a Weinstein domain whose completion is $F_i$. 
		The Weinstein domain is also denoted by $F_i$ by abuse of notation. 
		By the replacement, we achieve that $F_i$ has an actual boundary, rather than the asymptotic boundary. 
		\item We would like to recall that, for $0 \leq j \leq i$, $\partial h_j^n$ is a Legendrian sphere of the contact boundary of $F_i \times \mathbb{C}$. 
		When one takes the time-$t$ Liouville flow of $F_i \times \mathbb{C}$ of the Legendrian $\partial h_j^n$ as $t \to -\infty$, the limit defines a subset of the skeleton of $F_i$.
		We note that the Lagrangian sphere $V_j$ in (IH 5) would be obtained by smoothing the limit subset at an intuitive level.  
		Similarly, the exact Lagrangian $\overline{V}_{i+1} \subset F_i$ would be determined from $\partial \overline{M}_i$. 
		It is compatible to the condition in (IH 6) that $\overline{V}_{i+1} \subset \mathrm{Skel}(F_i)$. 
		Moreover, the image of $A_{i+1}$ in (IH 4) is a subset of $\overline{V}_{i+1}$. 
		\item We note that the induction hypotheses (IH 2) and a(IH 3) are related to the cyclic order of the singular values. 
		On the other hand, (IH 5) and (IH 6) are related to the vanishing cycles. 
		\item We note that in (IH 4), before describing the fourth induction hypothesis, we described an attachment of a subcritical handle of index $k$ and the corresponding change on the skeleton. 
		On the skeleton, the corresponding change is an attachment of $\mathbb{D}^k$, but by applying a technique given in \cite[Section 3.1]{Starkston}, one can thicken $\mathbb{D}^k$.
		After the thickening, one would attach $\mathbb{D}^k \times \mathbb{D}^{n-k}$ instead of $\mathbb{D}^k$. 
	\end{enumerate}
\end{remark}

\subsection{The base and the final steps}
\label{subsection the base and the final steps}
As mentioned in the previous subsection, we construct $F_i$ such that $T^*M_i \simeq F_i \times \mathbb{C}$ for $i =0, \dots, N$ inductively. 
We note that the final result of the inductive construction, i.e., $F_N$, is determined by the union of all subcritical Weinstein handles in $W_{\tilde{D}}$, and the final step of the inductive construction is to attach all critical Weinstein handles in $W_{\tilde{D}}$.
In this subsection, we discuss the base step of the inductive construction as well as the final step of the proof. 

\subsubsection{The base step}
The base step is to construct a Weinstein manifold $F_0$ such that $T^*M_0 \simeq F_0 \times \mathbb{C}$.
Note that the symbol $\simeq$ means an equivalence {\em up to Weinstein homotopic}. 
The symbol means the same equivalence in the rest of the paper. 
By the above construction, $M_0$ is an $n$-dimensional annulus, i.e., $M_0 = S^{n-1} \times [0,1]$.
Thus, $T^*M_0$ is equivalent to $T^*S^{n-1} \times \mathbb{D}^2$. 

Let $W_0$ be the total space of an abstract Lefschetz fibration $\pi_0$ given as  
\[ \pi_0 := (F_0 = T^*S^{n-1} ; \varnothing). \]
Since $T^*M_0$ and $W_0$ both are equivalent to $T^*S^{n-1} \times \mathbb{D}^2$, $T^*M_0$ is equivalent to $W_0$. 

Now, we need to check that $F_0 = T^*S^{n-1}$ satisfies the induction hypotheses (IH 1--6).
The first condition (IH 1) is easy to show, because the Lagrangian skeleton of $F_0 = T^*S^{n-1}$ is the zero section $S^{n-1}$.

For the second and the third conditions (IH2) and (IH3), we recall that 
\[S^{n-1} \sqcup S^{n-1} = S^{n-1} \times \{0, 1\} = \partial M_0 = \partial h_0^n \cup \partial \overline{M}_0.\]
Since one can identify $M_0$ with 
\[\text{the zero section  } S^{n-1} \times \{p \in \mathbb{D}^2 | p \text{  lies on the $x$-axis.}\} \subset F_0 \times \mathbb{D}^2.\]
Thus, one can identify $\partial h_0^n$ (resp.\ $\partial \overline{M}_0$) with the zero section of the fiber $\pi_0^{-1}(-1,0) \simeq F_0 = T^*S^{n-1}$ (resp.\ $\pi^{-1}_0(1,0)$), where $(\pm 1, 0) \in \mathbb{D}^2 \subset \mathbb{R}^2$. 

From the above, the other conditions (IH 4--6) hold trivially.
Especially, both of $V_0$ in (IH 5) and $\overline{V}_1$ in (IH 6) would be the zero section of the fiber $F_0 = T^*S^{n-1}$, since $\partial h_0^n$ and $\partial \overline{M}_1$ are the zero sections of the fibers at $(-1,0) \in \mathbb{D}^2$ and $(1, 0) \in \mathbb{D}^2$, respectively. 

We note also that the Lagrangian skeleton of $F_0 = T^*S^{n-1}$ is the zero section of the cotangent bundle. 
Thus, the skeleton is a smooth, closed Lagrangian submanifold. 

\subsubsection{The final step}
By repeating the inductive step $N$-times, one obtains $W_N$ such that $T^*M_N \simeq W_N = F_N \times \mathbb{D}^2$. 
We note that $T^*M_N$ is the union of all subcritical handles in $W_{\tilde{D}}$. 
Thus, in order to finish the proof, we need to attach critical handles to $W_N$. 
The attachment of critical handles to a subcritical Weinstein manifolds equipped with a product Lefschetz fibration has been studied well. 
See, for example, \cite[Proposition 8.1]{BEE} and \cite[Section 6]{GP}.

In our Weinstein handle decomposition $W_{\tilde{D}}$, there exist $(m+1)$-many critical Weinstein handles, labeled by 
\[H_0^n, \dots, H_N^n, H_{N+1}, \dots, H_m.\]
For $0 \leq j \leq N$, $H_j^n$ is attached to $W_n$ along $\partial h_j^n$ that is located in $F_i \times \left[\pi - j \theta_0, \pi-(j-1)\theta_0\right)$ by (IH 2). 
For $j \geq N+1$, $H_j$ is attached to $W_n$ along $\partial h_j \subset \partial \overline{M}_N$. 
We note that $\partial \overline{M}_N$ is a disjoint union of $\cup_{j \geq N+1} \partial h_j$.
Thus, one can see $\partial \overline{M}_N$ as a disjoint union of Legendrian spheres in $F_i \times \left( - (N-1) \theta_0, -N \theta_0\right]$.

Since $N \theta_0 < \pi$, it is easy to check that the attaching Legendrian spheres for the critical handles 
\[H_0^n, \dots, H_N^n, H_{N+1}, \dots, H_m\]
are pairwise disjoint. 
Thus, based on \cite[Proposition 8.1]{BEE}, one can obtain a Lefschetz fibration for $T^*M$.
Moreover, the vanishing cycles are determined by (IH5) and (IH 6).

\subsection{Sketch of the induction step}
\label{subsection sketch of the induction step}
Since the inductive step is complicated, we discuss it without details in this subsection, and the details will be discussed in Section \ref{subsection Legendrian isotopy}.

Let assume that the induction hypotheses (IH 1--6) hold for some $0 \leq i \leq N-1$. 
We would like to show that one can construct $F_{i+1}$ satisfying (IH 1--6) from $F_i$. 
To do that, let us remark that
\[T^*M_i \simeq \cup_{k=0}^i\left(H_k^{ori} \cup H_k^{n-1}\right), \text{  for all  } i \leq N.\]
And, for convenience, we will use the following notation: $\check{H}_{i+1}^{ori}$ (resp.\ $\check{H}_{i+1}^{n-1}$) denotes the Weinstein handle such that 
\begin{gather*}
	\dim(\check{H}_{i+1}^{ori}) = \dim(H_{i+1}^{ori}) -2, \ind(\check{H}_{i+1}^{ori}) = \ind(H_{i+1}^{ori}),\\
	\dim(\check{H}_{i+1}^{n-1}) = \dim(H_{i+1}^{n-1}) -2, \ind(\check{H}_{i+1}^{n-1}) = \ind(H_{i+1}^{n-1}).
\end{gather*}
Then, one can easily see that
\[H_{i+1}^{ori} \simeq \check{H}_{i+1}^{ori} \times \mathbb{D}^2, \hspace{0.2em}H_{i+1}^{n-1} \simeq \check{H}_{i+1}^{n-1} \times \mathbb{D}^2,\]
where the equivalence is up to Weinstein homotopy and $\mathbb{D}^2$ has the standard radial Weinstein structure.

By the definition of $M_{i+1}$, 
\[T^*M_{i+1} \simeq (F_i \times \mathbb{C}) \cup H_{i+1}^{ori} \cup H_{i+1}^{n-1}.\]
Our strategy is to take some Legendrian isotopies of the attaching Legendrians $\partial_L H^{ori}_{i+1}$ and $\partial_L H_{i+1}^{n-1}$ so that 
the Weinstein handle attachments along the attaching Legendrians after the isotopies {\em respects} the product structure in Equation \eqref{eqn induction hypothesis}.
In other words, 
\[W_{i+1} = (F_i \times \mathbb{C}) \cup H_{i+1}^{ori} \cup H_{i+1}^{n-1} \simeq \left(F_i \cup \check{H}_{i+1}^{ori} \cup \check{H}_{i+1}^{n-1}\right) \times \mathbb{C}.\]

In order to refer later, we remark the following: 
\begin{remark}
	\label{rmk attaching two at one time}
	\mbox{}
	\begin{enumerate}
		\item We recall that $h_{i+1}$ is divided into three handles $h_{i+1}^{ori}, h_{i+1}^{n-1}$, and $h_{i+1}^n$.
		By the construction in Section \ref{subsect technical statement}, if one identifies $h_{i+1}$ with $\mathbb{D}^k \times \mathbb{D}^{n-k}$ where $k = \mathrm{ind} (h_{i+1})$, $h_{i+1}^n \simeq \mathbb{D}^k \times \mathbb{D}^{n-k}_\epsilon$ with $\epsilon < \tfrac{1}{2}$.
		And similarly, the union of $h_{i+1}^{ori} \cup h_{i+1}^{n-1} \simeq S^{n-k-1} \times [0,1] \times \mathbb{D}^k$. 
		We also recall that, roughly speaking, $H_{i+1}^{ori} \simeq D^*h_{i+1}^{ori}$ and $H_{i+1}^{n-1} \simeq D^*h_{i+1}^{n-1}$.
		Thus, one can conclude that 
		\begin{gather}
			\label{eqn subcritical parts}
			H_{i+1}^{ori} \cup H_{i+1}^{n-1} \simeq D^*\left(S^{n-k-1} \times [0,1] \times \mathbb{D}^k\right) \simeq D^*\left(S^{n-k-1} \times \mathbb{D}^k\right) \times D^*[0,1] \simeq \left(\check{H}_{i+1}^{ori} \cup \check{H}_{i+1}^{n-1}\right) \times \mathbb{D}^2.
		\end{gather}
		Then, by taking a proper Legendrian isotopy, we would like to show that the attachment of 
		\[H_{i+1}^{ori} \cup H_{i+1}^{n-1} \simeq \left(\check{H}_{i+1}^{ori} \cup \check{H}_{i+1}^{n-1}\right) \times \mathbb{D}^2\]
		to $W_i = F_i \times \mathbb{D}^2$ respects the product structure.
		\item We also note that the attachment of $\check{H}_{i+1}^{ori} \cup \check{H}_{i+1}^{n-1} \simeq D^*(S^{n-k-1} \times \mathbb{D}^k)$ to $F_i$ could be seen as the attachment discussed in Section \ref{subsection recovering Lagrangian skeleta}.
		For that, we require the first and fourth induction hypotheses (IH 1) and (IH 4).
		More precisely, according to the discussion in Section \ref{subsection recovering Lagrangian skeleta}, $D^*(S^{n-k-1} \times \mathbb{D}^k)$ can be attached along 
		\[\partial (S^{n-k-1} \times \mathbb{D}^k) = S^{n-k-1} \times S^{k-1},\]
		if there exists an transversally embedded $S^{n-k-1} \times S^{k-1}$ in the Lagrangian skeleton of $F_i$. 
		By (IH 1, 4), Lemma \ref{lem disk bundle}, and Remark \ref{rmk transversally embedded}, one can find a transversally embedded $S^{n-k-1} \times S^{k-1}$ as the boundary of the embedded disk bundle of $A_{i+1}(\partial \mathbb{D}^k)$ given in (IH 4).
	\end{enumerate}
\end{remark}

If we can take a proper Legendrian isotopy and if we can attach two Weinstein handles $H_{i+1}^{ori}$ and $H_{i+1}^{n-1}$ as described above, then the resulting Weinstein domain $W_{i+1}$ has a product Lefschetz fibration 
\[\pi_{i+1} : W_{i+1} \to \mathbb{C},\]
whose fiber is 
\[F_{i+1} = F_i \cup \check{H}_{i+1}^{ori} \cup \check{H}_{i+1}^{n-1}.\]
Thus, it is enough to find a proper Legendrian isotopy. 

We will construct a proper Legendrian isotopy in Section \ref{subsection Legendrian isotopy}. 
In the rest of the present subsection, we describe the properties that the Legendrian isotopy should satisfy. 

For convenience, we let 
\[\Lambda_{i+1} := \partial \overline{M}_i \subset F_i \times [-i \theta_0, -(i-1)\theta_0).\]
Then, $\Lambda_{i+1}$ is divided into three parts, one contained in $\partial_L H_{i+1}^{ori} \cup \partial_L H_{i+1}^{n-1}$, one in $\partial_L H_{i+1}^n$, and their compliment. 
Along the first part, i.e., one in $\partial_L H_{i+1}^{ori} \cup \partial_L H_{i+1}^{n-1}$, we attach the subcritical handles $H_{i+1}^{ori}$ and $H_{i+1}^{n-1}$, along the second one, i.e., one in $\partial_L H_{i+1}^n$, we attach the critical handle $H_{i+1}^n$, and the last one, i.e., the complement, would be a part of $\partial \overline{M}_{i+1}$ in the next induction step. 
Moreover, at the end of the induction, $\partial \overline{M}_N$ would be the union of attaching Legendrian spheres for the critical handles $H_{N+1}, \dots, H_m$.
From the above description, we set the following notation:	
\begin{itemize}
	\item We let $\Lambda_{i+1}^{sub}$ denote the part of $\Lambda_{i+1}$ contained in $\partial_L H_{i+1}^{ori} \cup \partial_L H_{i+1}^{n-1}$;
	\item $\Lambda_{i+1}^{cri}$ denote the part of $\Lambda_{i+1}$ contained in  $\partial_L H_{i+1}^n$;
	\item $\Lambda_{i+1}^{comp}$ denote the complement of $\Lambda_{i+1}^{sub} \cup \Lambda_{i+1}^{cri}$ in $\Lambda_{i+1}$.
\end{itemize}

We will take the Legendrian isotopy of $\Lambda_{i+1}$ so that after the Legendrian isotoping, the following hold: 
\begin{enumerate}
	\item[(A)]$\Lambda_{i+1}^{sub}$ is lying on the horizontal boundary of $W_i$, i.e., $(\partial_\infty F_i) \times \mathbb{D}^2$. 
	Moreover, $\Lambda_{i+1}^{sub}$ is a product of a Legendrian in the boundary of $F_i$ and a diameter of $\mathbb{D}^2$. 
	For the future use, let $\theta_i$ be the number such that the diameter connects $e^{-\sqrt{-1} \theta_i}$ and $e^{-\sqrt{-1}(-\theta_i + \pi)}$.
	\item[(B)] $\Lambda_{i+1}^{cri}$ is lying on the vertical boundary of $W_i$, i.e., $F_i \times (\partial \mathbb{D}^2)$.
	Moreover, that part is projected down to $\{e^{\sqrt{-1}\theta} \hs | \hs \theta \in \left[\pi-(i+1)\theta_0, \pi-i \theta_0\right)\} \subset S^1 = \partial \mathbb{D}^2$ by $\pi_i$.
\end{enumerate} 
A conceptual picture for the lowest-dimensional case is given in Figure \ref{figure legendrian modification}.
\begin{figure}[h]
	\centering
	\input{legendrian_modification.txt}		
	\caption{a). An example of handle decomposition $D$ of an annulus with an index $0$-handle $h_0$ and an index $1$-handle $h_1$.  b). A handle decomposition $\tilde{D}$ induced from $D$. c). The Lefschetz fibration $\pi_0$. We note that the zero sections of two fiber $\pi_0^{-1}(\pm 1)$ are identified with $\partial (h_0^1 \cup h_0^{ori}) = \partial h_0 \cup \partial h_0^2$. d). The projected image of the Legendrian that we would like to achieve by Legendrian isotoping. We note that the colored parts in d) correspond to the same colored part in b). The red parts correspond to $\Lambda_1^{sub}$ and the blue part corresponds to $\Lambda_1^{cri}$.}
	\label{figure legendrian modification}
\end{figure} 

\begin{remark}
	\label{rmk reason of IH 4 and 6}
	In (IH 6), $\Lambda_{i+1} \simeq \partial \overline{M}_i$ is a Legendrian lift (see Definition \ref{def legendrian lift} in the next subsection) of $\overline{V}_{i+1}$.
	In the later sections, our strategy for taking a Legendrian isotopy is to take a Lagrangian isotopy of $\overline{V}_{i+1}$ (and its Legendrian lift). 
	Moreover, in the process, roughly speaking, we move $\Lambda_{i+1}^{sub}$ and $\Lambda_{i+1}^{cri}$, or equivalently, the part of $\Lambda_{i+1} \simeq \partial \overline{M}_i$ corresponding to $\partial_R h_{i+1}$, and the compliment $\Lambda_{i+1}^{comp}$ does not change through the Legendrian isotopy. 
\end{remark}

\subsection{Preparations for Section \ref{subsection Legendrian isotopy}}
\label{subsection preparations for Legendrian isotopy}
In Section \ref{subsection preparations for Legendrian isotopy}, we set notation before discussing Legendrian isotopies.

\subsubsection{Product structure of $W_i$}
\label{subsubsection product structure}
Since we would like to take Legendrian isotopies on $\partial_\infty W_i$ for $i = 0, \dots, N-1$, we need to review the contact structure on $\partial_\infty W_i$. 
The contact structure is the restriction of the Liouville structure, thus we start from the Liouville structure of $W_i$. 

For $i \leq N$, $W_i$ admits a product structure, i.e., 
\[W_i = F_i \times \mathbb{C}.\]
We note that as mentioned in Remark \ref{rmk product structure}, sometimes we assume 
\[W_i = F_i \times \mathbb{D}^2.\]
We can assume that because $\mathbb{C}$ is the symplectic completion of $\mathbb{D}^2$, and thus, two different $W_i$s are equivalent up to symplectic completion.
From this point of view, $W_i$ is also equivalent to $F_i \times \mathbb{D}_R^2$ where $\mathbb{D}_R^2$ means the $2$-dimensional disk of radius $R>0$, so by abuse of notation, we say
\begin{gather*}
	W_i = F_i \times \mathbb{D}_R^2.
\end{gather*} 

The product Liouville form on $W_i = F_i \times \mathbb{D}_R^2$ is given by 
\begin{gather}
	\label{eqn Liouville form of W_i}
	\lambda_{F_i} + \frac{1}{2}(x dy - y dx),
\end{gather}
where $\lambda_{F_i}$ is a Liouville form of $F_i$, and where $x, y$ are the standard coordinates of $\mathbb{D}_R^2 \subset \mathbb{R}^2$. 
For convenience, we simply use $\lambda_i$ for $\lambda_{F_i}$ if there is no chance of confusion. 
Also, by rescaling, we assume that $\mathbb{D}_R^2$ has the radius $1$.
However, the rescaling changes the Liouville form that is given in \eqref{eqn Liouville form of W_i}, and one obtains
\[\lambda_i + \frac{1}{c}(x dy - y dx),\]
where $c = 2R$. 

We note that the Lefschetz fibration $\pi_i : W_i \to \mathbb{D}^2$ is a projection to the second factor. 
For the projection to the first factor $F_i$, we set  
\[pr_i : W_i = F_i \times \mathbb{D}^2 \to F_i.\]

\subsubsection{Some contact topology on $\partial W_i$}
\label{subsubsection some contact topology}
Under the product structure, $\partial_\infty W_i$ consists of two parts, the vertical boundary $F_i \times \partial \mathbb{D}^2$ and the horizontal boundary $\partial_\infty F_i \times \mathbb{D}^2$. 
The contact forms on the vertical boundary and the horizontal boundary are given by 
\begin{gather}
	\label{eqn contact form on the vertical boundary}
	\lambda_i + \frac{1}{c}d \theta, \\
	\label{eqn contact form on the horizontal boundary} 
	\alpha_{F_i} + \frac{1}{c}(x dy- y dx),
\end{gather}
where $\theta \in \mathbb{R}/2\pi$ is the standard coordinate of $\partial \mathbb{D}^2$, and where $\alpha_{F_i}$ denotes the restriction of $\lambda_{F_i}$ on $\partial F_i$. 
We simply use $\alpha_i$ instead of $\alpha_{F_i}$ if there is no chance of confusion. 

Let $L$ be an exact Lagrangian of $F_i$, i.e., there is a function $f : L \to \mathbb{R}$ such that $df = \lambda_i|_L$. 
Then, together with a choice of $\theta_* \in \mathbb{R} / 2\pi$, one could lift $L$ to a Legendrian $\Lambda$ on the vertical boundary such that 
\begin{gather}
	\label{eqn lifted Legendrian}
	\Lambda := \{\big(p, \cos (- c f(p) - \theta_*), \sin ( - c f(p) - \theta_*) \big) \in F_i \times \mathbb{D}^2 \hs | \hs p \in L\}.
\end{gather} 

\begin{remark}
	To prove that $\Lambda$ is a Legendrian, we observe that $T L$ is identified with $T\Lambda$ by 
	\[V \in TL \mapsto V + c V(f) \sin (- c f(p) - \theta_*) \partial x - c V(f) \cos( - c f(p) - \theta_*) \partial y. \]
	The vector is in the kernel of the contact form, i.e., the one-form in Equation \eqref{eqn contact form on the vertical boundary}, since
	\begin{gather*}
		\lambda_i(V) - \frac{1}{c} c V(f) = df(V) - V(f) = 0.
	\end{gather*}
	We note that the first equality comes from $\lambda_i|_L = df$. 
	Then, it proves that $\Lambda$ is a Legendrian. 
\end{remark}

\begin{definition}
	\label{def legendrian lift}
	The Legendrian $\Lambda$ in Equation \eqref{eqn lifted Legendrian} is called the {\em Legendrian lift of $L$ with respect to $\lambda_i$ and $\theta_0$}.
\end{definition}

We note that for a Lagrangian $L$, its Legendrian lift depends on the choice of $\lambda_i$ and $\theta_0$ in Definition \ref{def legendrian lift}. 
However, the choice of $\theta_0$ does not change the Legendrian isotopy class of a Legendrian lift. 
To be more precise, for $\theta_0$ and $\theta_1$, let $\theta_s = (1-s) \theta_0 + s \theta_1$. 
Then, a Legendrian lift with respect to $\theta_s$ defines a Legendrian isotopy between Legendrian lifts with respect to $\theta_0$ and $\theta_1$.
Similarly, one can easily check that the choice of $\lambda_i$ changes the contact structure on the asymptotic boundary and thus, the choice effects Legendrian lifts. 
However, for two different choice of Liouville one forms, Gray's Theorem guarantees that there exists a $1$-parameter family of diffeomorphisms connecting two different contact structures and also Legendrian lifts with respect to two different Liouville one forms. 
In this sense, we roughly say that there exists a {\em unique} Legendrian lift of an exact Lagrangian up to aforementioned isotopy, not depending on the choices of $\lambda_i$ and $\theta_0$. 

\subsubsection{A Hamiltonian flow on $F_i$}
\label{subsubsection a hamiltoian flow}
As we replaced a Weinstein manifold $\mathbb{C}$ with a Weinstein manifold $\mathbb{D}^2$, we replace a Weinstein manifold $F_i$ with a Weinstein domain for convenience. 
The Weinstein domain will be denoted by $F_i$ again by abusing notation. 
We end the current subsection by constructing a Hamiltonian flow on $F_i$.
We will use the Hamiltonian isotopy later, especially in Section \ref{subsubsection crossing the base}.

Since $F_i$ is a Weinstein domain, there is a small tubular neighborhood of $\partial F_i$ which is symplectomorphic to $\partial F_i \times (-\epsilon, 0]$. 
The symplectic form on $\partial F_i \times (- \epsilon, 0]$ is $d(e^r \alpha_i)$ where $r \in (-\epsilon,0]$.
Moreover, the Liouville form $\lambda_i$ agrees with $e^r \alpha_i$ on $\partial F_i \times (-\epsilon,0]$.

Let $H : F_i \to \mathbb{R}$ be a function such that 
\begin{itemize}
	\item $H|_{F_i \setminus \partial F_i \times (-\epsilon,0]} \equiv 0$, 
	\item $H|_{\partial F_i \times (-\frac{\epsilon}{2}, 0]} \equiv e^r$, and
	\item $H|_{\partial F_i \times [-\epsilon, -\frac{\epsilon}{2}]}$ is smooth and increasing with respect to $r$-coordinate. 
\end{itemize}
Let $\Phi_i^t$ denote the time $t$ Hamiltonian flow associated to $H$. 

\begin{remark}
	\label{rmk reeb and hamiltonain flow}
	It is easy to check that on $\partial F_i$, $\Phi_i^t$ is the time $t$ Reeb flow with respect to the contact form $\alpha_i$. 
\end{remark}

\subsection{Legendrian isotopy}
\label{subsection Legendrian isotopy}
Now, we prove the induction step in the subsection. 

\subsubsection{More detailed sketch}
\label{subsubsection more detailed sketch}
We note that the sketch for the induction step is given in Section \ref{subsection sketch of the induction step}, but we can give a little bit more detailed sketch by using the notations defined in Section \ref{subsection preparations for Legendrian isotopy}. 

As mentioned in the sketch, we first construct a Legendrian isotopy of $\Lambda_{i+1}$, which satisfies the conditions (A) and (B) given at the end of Section \ref{subsection sketch of the induction step}.
For (A), after Legendrian isotoping, $\Lambda_{i+1}^{sub}$ should lie on the horizontal boundary of $W_i = F_i \times \mathbb{D}^2$. 
Since the original $\Lambda_{i+1}$ is contained in the vertical boundary of $W_{i+_1}$, the starting point is to push $\Lambda_{i+1}^{sub}$ to the corner of $W_i$.
This process will be explained in Section \ref{subsubsection push to the corner}.

Even after pushing to the corner, $\Lambda_{i+1}^{sub}$ does not satisfy (A), i.e., $\Lambda_{i+1}^{sub}$ is not a product of Legendrians in $\partial F_i$ and a diameter of the base $\mathbb{D}^2$. 
Thus, we need another Legendrian isotopy making $\Lambda_{i+1}^{sub}$ to satisfy (A). 
This process will be explained in Section \ref{subsubsection crossing the base}.
And, one can also achieve the condition (B) in this step.

The next step is to attach two subcritical Weinstein handles $H_{i+1}^{ori}$ and $H_{i+1}^{n-1}$. 
We note that as described in Equation \eqref{eqn subcritical parts}, 
\[H_{i+1}^{ori} \cup H_{i+1}^{n-1} \simeq D^*\left(S^{n-k-1} \times [0,1] \times \mathbb{D}^k\right) \simeq D^*\left(S^{n-k-1} \times \mathbb{D}^k\right) \times D^*[0,1] \simeq \left(\check{H}_{i+1}^{ori} \cup \check{H}_{i+1}^{n-1}\right) \times \mathbb{D}^2.\]
Then, attaching $H_{i+1}^{ori} \cup H_{i+1}^{n-1}$ along 
\[\text{a Legendrian of  } \partial F_i \times \text{  a diameter of the base  } \mathbb{D}^2,\]
is equivalent to attach 
\[\check{H}_{i+1}^{ori} \cup \check{H}_{i+1}^{n-1} \simeq D^*\left(S^{n-k-1} \times \mathbb{D}^k\right)\]
to $F_i$. 
It will give us $F_{i+1}$ satisfying (IH 1) and we can also show that $F_{i+1}$ satisfies (IH 4) with an extra argument. 

For the other induction hypotheses, we note that 
\[\partial M_i = \partial \overline{M}_i \cup \left(\bigcup_{j \leq i} \partial h_j^n\right), \partial M_{i+1} = \partial \overline{M}_{i+1} \cup \left(\bigcup_{j \leq i+1} \partial h_j^n\right).\]
When one compares $\partial M_i$ and $\partial M_{i+1}$, one of the changes is that $\partial h_{i+1}^n$ is added. 
We note that by the construction of $h_{i+1}^n$, $\partial h_{i+1}^n$ consists of two parts; one is denoted by $\Lambda_{i+1}^{cri}$ and the other is contained in $\partial h_{i+1}^{ori}$ and $\partial h_{i+1}^{n-1}$.
We also note that the second part contained in $\partial h_{i+1}^{ori}$ and $\partial h_{i+1}^{n-1}$ can be seen as a boundary part of the core Lagrangians of $H_{i+1}^{ori}$ and $H_{i+1}^{n-1}$. 
Then, together with the above explained attachments of $H_{i+1}^{ori}$ and $H_{i+1}^{n-1}$, (B) proves that the $(I+1)^{th}$ fiber $F_{i+1}$ satisfies the induction hypotheses (IH 2, 5).

The similar arguments also prove that $(i+1)^{th}$ step of the inductive construction satisfies (IH 3, 6) since $\partial \overline{M}_{i+1}$ consists of $\Lambda_{i+1}^{comp}$ and a part contained in $\partial h_{i+1}^{ori}$ and $\partial h_{i+1}^{n-1}$.

\subsubsection{Push to the corner}
\label{subsubsection push to the corner}
First, we recall that by (IH 3), $\Lambda_{i+1} \simeq \partial \overline{M}_i \subset F_i \times \left( -(i-1) \theta_0, -i \theta_0 \right]$. 
Thus, every point in $\partial \overline{M}_i$ can be coordinated by $(x, \theta)$ with $x \in F_i$ and $\theta \in \left( -(i-1) \theta_0, -i \theta_0 \right]$.
By taking the isotopy sending $(x,\theta)$ to $(x, \theta + t)$ for $t \in [- \theta_0, 0]$, one can assume that 
\[\Lambda_{i+1} \subset \partial \overline{M}_i \subset F_i \times \left( -i \theta_0, -(i+1) \theta_0 \right].\]

We note that by (IH 6), $\Lambda_{i+1}$ is equivalent to a Legendrian lift of $\overline{V}_{i+1} \subset \mathrm{Skel}(F_i)$.
Our strategy is to take a Lagrangian isotopy of $\mathrm{Skel}(F_i)$, especially, in a small neighborhood of the image of $A_{i+1}$ in (IH 4). 

We first recall that since $\mathrm{Skel}(F_i)$ is a recovering Lagrangian skeleton of $F_i$, for any $x \in \mathrm{Skel}(F_i)$, $\mathrm{Skel}(F_i)$ is locally modeled by $L_{\mathcal{A}_{m+1}} \times \mathbb{R}^{n-m-1} \subset T^*\mathbb{R}^m \times T^*\mathbb{R}^{n-m-1}$, where $\mathcal{A}_{m+1}$ is the positively rooted $A_{m+1}$-tree.
We will take an isotopy along $\mathbb{R}^{n-m-1}$ in $T^*\mathbb{R}^{n-m-1}$ and the product with $L_{\mathcal{A}_{m+1}}$ with the isotopy will provide a Lagrangian isotopy of $\mathrm{Skel}(F_i)$. 

To do that, we fix two auxiliary data. 
The first one is a disk bundle of $A_{i+1}(S^{k-1})$.
Note that by applying Lemma \ref{lem disk bundle}, one could extend the map $A_{i+1}:S^{k-1} \to \mathrm{Skel}(F_i)$ to
\[A_{i+1} : \mathbb{D}^{n-k}_{3 \epsilon} \times S^{k-1} \to \mathrm{Skel}(F_i),\] 
where $\mathbb{D}^{n-k}_{3 \epsilon}$ is the $(n-k)$-dimensional closed disk with radius $3 \epsilon$. 
We emphasize that by Lemma \ref{lem disk bundle} (2), $A_{i+1}$ sends $\mathbb{D}_{3\epsilon}^{n-k}$-factor to the $\mathbb{R}^{n-m-1}$-factor of the local model. 

The second auxiliary data is a function $\varphi: [0,3\epsilon] \to \mathbb{R}$ such that
\begin{itemize}
	\item $\varphi(3\epsilon) =0$, and
	\item the graph of $\varphi'$ is given in Figure \ref{figure graph}.
\end{itemize} 
\begin{figure}[h]
	\centering
\begingroup%
  \makeatletter%
  \providecommand\color[2][]{%
    \errmessage{(Inkscape) Color is used for the text in Inkscape, but the package 'color.sty' is not loaded}%
    \renewcommand\color[2][]{}%
  }%
  \providecommand\transparent[1]{%
    \errmessage{(Inkscape) Transparency is used (non-zero) for the text in Inkscape, but the package 'transparent.sty' is not loaded}%
    \renewcommand\transparent[1]{}%
  }%
  \providecommand\rotatebox[2]{#2}%
  \newcommand*\fsize{\dimexpr\f@size pt\relax}%
  \newcommand*\lineheight[1]{\fontsize{\fsize}{#1\fsize}\selectfont}%
  \ifx\svgwidth\undefined%
    \setlength{\unitlength}{198.42519685bp}%
    \ifx\svgscale\undefined%
      \relax%
    \else%
      \setlength{\unitlength}{\unitlength * \real{\svgscale}}%
    \fi%
  \else%
    \setlength{\unitlength}{\svgwidth}%
  \fi%
  \global\let\svgwidth\undefined%
  \global\let\svgscale\undefined%
  \makeatother%
  \begin{picture}(1,0.67142857)%
    \lineheight{1}%
    \setlength\tabcolsep{0pt}%
    \put(0,0){\includegraphics[width=\unitlength,page=1]{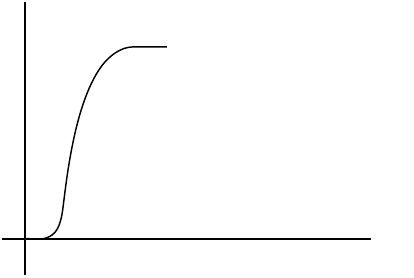}}%
    \put(0.02870664,0.56239146){\makebox(0,0)[lt]{\lineheight{1.25}\smash{\begin{tabular}[t]{l}$1$\end{tabular}}}}%
    \put(0.31071391,0.04760796){\makebox(0,0)[lt]{\lineheight{1.25}\smash{\begin{tabular}[t]{l}$\epsilon$\end{tabular}}}}%
    \put(0,0){\includegraphics[width=\unitlength,page=2]{graph.pdf}}%
    \put(0.47549681,0.04760977){\makebox(0,0)[lt]{\lineheight{1.25}\smash{\begin{tabular}[t]{l}$2\epsilon$\end{tabular}}}}%
    \put(0.72398216,0.04760977){\makebox(0,0)[lt]{\lineheight{1.25}\smash{\begin{tabular}[t]{l}$3\epsilon$\end{tabular}}}}%
    \put(0.02871067,0.04760977){\makebox(0,0)[lt]{\lineheight{1.25}\smash{\begin{tabular}[t]{l}$0$\end{tabular}}}}%
  \end{picture}%
\endgroup%

	\caption{}		
	\label{figure graph}
\end{figure}

One can also define $\tilde{\phi}: \mathbb{D}^{n-k}_{3\epsilon} \times S^{k-1} \to \mathbb{R}$ as 
\[\tilde{\phi}(q,p) = \varphi(|q|).\]
And, since $A_{i+1}$ is injective, there exists a function $\phi$ on the image of $A_{i+1}$ such that $\phi = \tilde{\phi} \circ A_{i+1}^{-1}$. 
Moreover, since $\phi$ has the function value $0$ along the boundary of $\mathrm{Im}(A_{i+1})$, $\phi$ extends to the whole Lagrangian skeleton $\mathrm{Skel}(F_i)$. 

With these auxiliary data, we could take an isotopy of a local model $L_{\mathcal{A}_{m+1}} \times \mathbb{R}^{n-m-1} \subset T^*\mathbb{R}^m \times \mathbb{R}^{n-m-1}$. 
We note that $\phi$ depends only on the $\mathbb{R}^{n-m-1}$-factor of the local model.
Thus, if we take a Lagrangian isotopy deforming $\mathrm{Skel}(F_i)$ to the graph of $-d \phi$, the isotopy modifies $\mathbb{R}^{n-m-1}$-factors in $T^*\mathbb{R}^{n-m-1}$. 
We note that the term ``graph of $-d \phi$'' makes sense only in the local model since it depends on the cotangent bundles structure, and thus, rigorously, we are abusing terminologies.
To be more precise, we could fix a finite collection of local coordinate charts modeled by a local model and take an isotopy in each local chart. 
The existence of such a finite collection can be guaranteed by the compactness of $\mathrm{Skel}(F_i)$. 

With the abused terminologies, the Legendrian lift of the Lagrangian after taking an isotopy can be coordinated as 
\begin{gather}
	\label{eqn lifted Legendrian in piece}
	\left\{\Big(x, -d\phi(x), \big(\cos ( c \phi(x) + \theta_*), \sin ( c \phi(x) + \theta_*)\big)\Big) | x \in \mathrm{Skel}(F_i)\right\}.
\end{gather}
We would like to point out is that one can choose sufficiently small $c$ in Equation \eqref{eqn lifted Legendrian in piece}, as mentioned in Section \ref{subsubsection product structure}. 
Thus, one can choose $c$ and $\theta_*$ satisfying that 
\begin{gather}
	\label{eqn inequality for IH 2} 
	-(i+1) \theta_0 < -c + \theta_* < \theta_* < -i \theta_0.
\end{gather}
Later, it implies that the $(i+1)^{th}$ step satisfies (IH 3). 

For the later use, we set notation. 
So far, we take a Legendrian isotopy of $\Lambda_{i+1}$ pushing some parts of it to the corner.
Let $\Lambda_{i+1}'$ denote the Legendrian we obtained after taking isotopy. 
As $A_{i+1}$ coordinates $\Lambda_{i+1}$, we can coordinate $\Lambda_{i+1}$ as follows:
\begin{gather}
	\label{eqn parametrization}
	j_{i+1}: \mathbb{D}_{3 \epsilon}^{n-k} \times S^{k-1} \to F_i\\
	\notag
	(p,q) \mapsto \left((p,q), -d\phi_(p,q)\right), \\
	\notag
	J_{i+1}: \mathbb{D}_{3 \epsilon}^{n-k} \times S^{k-1} \to F_i \times \mathbb{D}^2\\
	\notag
	(p,q) \mapsto \Big((p,q), -d\phi_{(p,q)}, \big(\cos (c \phi(p,q) + \theta_*), \sin (c \phi(p,q) + \theta_*)\big)\Big).
\end{gather}

One can easily see that through the above isotopy, we push the part 
\[\left(\mathbb{D}_{2\epsilon}^{n-k} \setminus \mathbb{D}_\epsilon^{n-k}\right) \times S^{k-1}\]
to the corners. 
If we identify $\mathbb{D}_{2\epsilon}^{n-k} \times S^{k-1} \subset \Lambda_{i+1}$ with $\partial_R h_{i+1}$, the part pushed to the corners can be identified with $\Lambda_{i+1}^{sub}$.

\subsubsection{Crossing the base}
\label{subsubsection crossing the base}
In the previous steps, we take a Legendrian isotopy of $\Lambda_{i+1}$ pushing $\Lambda_{i+1}^{sub}$ to the corner of $W_i$.
The resulting Legendrian is denoted as $\Lambda_{i+1}'$.
The next step is to isotope $\Lambda_{i+1}'$ so that the Legendrian after isotoping satisfies conditions (A) and (B) of Section \ref{subsection sketch of the induction step}.
After taking such a Legendrian isotopy, one can attach the subcritical handles $H_{i+1}^{ori} \simeq \check{H}_{i+1}^{ori} \times \mathbb{D}^2$ and $H_{i+1}^{n-1} \simeq \check{H}_{i+1}^{n-1} \times \mathbb{D}^2$ to $W_i = F_i \times \mathbb{D}^2$ while preserving the product structure.  

In order to do that, we will construct two one-parameter families of maps $\gamma_1^s$ and $\gamma_2^s$ for all $s \in [0,\pi]$. 
These two families will be defined on $\partial \mathbb{D}^{n-k}_{2\epsilon} \times S^{k-1} \times [0,1]$ and $\mathbb{D}^{n-k}_{2\epsilon} \times S^{k-1}$ respectively.
We note that 
\begin{align*}
	\partial \mathbb{D}^{n-k}_{2\epsilon} \times [0,1] \xrightarrow{\text{an identification}} \mathbb{D}_{2\epsilon}^{n-k} \setminus \mathbb{D}_\epsilon^{n-k} \\
	\partial \mathbb{D}^{n-k}_{2\epsilon} \times \{0\}, \partial \mathbb{D}^{n-k}_{2\epsilon} \times \{1\} \mapsto \partial \mathbb{D}_{2\epsilon}^{n-k}, \partial \mathbb{D}_{\epsilon}^{n-k}.
\end{align*}
If one rescales the domain of $\gamma_2^s$, the above identification will allow us to {\em concatenate} two families $\gamma_1^s$ and $\gamma_2^s$. 
At the end, the concatenation of them, defined on $\mathbb{D}_{2\epsilon}^{n-k} \times S^{k-1}$ will give a Legendrian isotopy connecting $\Lambda_{i+1}'$ and the desired Legendrian. 

The first family $\gamma_1^s$ is defined as follows:
\begin{gather}
	\label{eqn gamma_1}
	\gamma_1^s : \partial \mathbb{D}^{n-k}_{2\epsilon} \times S^{k-1} \times [0,1] \to \partial (F_i \times \mathbb{D}^2),\\
	\notag (p, q, t) \mapsto \big(\Phi_i^{-\frac{1}{c} t \sin s}(j_{i+1}(p,q)), (1-t) \cos (c\phi(p,q)+\theta_*) + t \cos(c\phi(p,q)+\theta_*+s), \\
	\notag (1-t) \sin (c\phi(p, q) + \theta_*) + t \sin(c\phi(p,q)+\theta_*+s)\big).
\end{gather}
We note that $j_{i+1}(p,q)$ is defined in Equation \eqref{eqn parametrization}, and $\Phi_i^t$ is defined in Section \ref{subsubsection a hamiltoian flow}.

One can check the followings:
\begin{enumerate}
	\item[(i)] $\operatorname{Im}(\gamma_1^s)$ is a Legendrian for any $s \in [0,\pi]$, and
	\item[(ii)] $\gamma_1^s(p,q,0) = J_{i+1}(p,q)$ for all $(p,q) \in \partial \mathbb{D}^{n-k}_{2\epsilon} \times S^{k-1}$. 
\end{enumerate}

The second item (ii) is easy to check by definition. 
In order to prove (i), one need to compute 
\begin{gather}
	\label{eqn gamma_1 legendrian}
	\left(\alpha_i + \tfrac{1}{c}(xdy-ydx)\right)\left(\gamma_{1*}^{s}(\partial t)\right)=0, \\
	\label{eqn gamma_1 legendrian 2}
	\left(\alpha_i + \tfrac{1}{c}(xdy-ydx)\right)(V) = 0 \text{  for all  } V \in T\left(\partial \mathbb{D}^{n-k}_{2\epsilon} \times S^{k-1}\right).
\end{gather}

For Equation \eqref{eqn gamma_1 legendrian}, we observe that 
\begin{gather*}
	\gamma_{1*}^{s}(\partial t) = \frac{\partial}{\partial t}\left(\Phi_i^{-\frac{1}{c} t \sin s}(j_{i+1}(p,q)\right) + \Big(- \cos \big(c\phi(p,q)+\theta_* \big) +  \cos\big(c\phi(p, q) + \theta_* + s\big)\Big)\partial x \\ + \Big(- \sin \big(c\phi(p, q) + \theta_*\big) +  \sin\big(c\phi(p, q) +\theta_* + s\big)\Big) \partial y.
\end{gather*}
When one plugs this vector into the contact form on the horizontal boundary, i.e., the one-form in Equation \eqref{eqn contact form on the horizontal boundary}, one obtains 
\begin{gather*}
	\left(\alpha_i + \tfrac{1}{c}(x dy- y dx)\right) \left(\gamma^s_{1*}(\partial t)\right) = \alpha_i\Big(\frac{\partial}{\partial t}\Phi_i^{-\frac{1}{c} t \sin s}\big(j_{i+1}(p,q)\big)\Big) +\\
	\tfrac{1}{c}\big((1-t) \cos (c\phi(p,q)+\theta_*) + t \cos(c\phi(p,q)+\theta_*+s)\big)\Big(- \sin \big(c\phi(p, q) + \theta_*\big) +  \sin\big(c\phi(p, q) +\theta_* + s\big)\Big)  \\
	-\tfrac{1}{c} \big((1-t) \sin (c\phi(p, q) + \theta_*) + t \sin(c\phi(p,q)+\theta_*+s)\big)\Big(- \cos \big(c\phi(p,q)+\theta_* \big) +  \cos\big(c\phi(p, q) + \theta_* + s\big)\Big) \\ 
	=- \tfrac{1}{c} \sin s + \tfrac{1}{c} \sin s = 0.
\end{gather*}
We note that 
\[\alpha_i\Big(\frac{\partial}{\partial t}\Phi_i^{-\frac{1}{c} t \sin s}\big(j_{i+1}(p,q)\big)\Big) = - \tfrac{1}{c} \sin s, \]  
since $\Phi_i^t$ is the Reeb flow on $\partial F_i$. 
Thus, Equation \eqref{eqn gamma_1 legendrian} holds. 

For Equation \eqref{eqn gamma_1 legendrian 2}, we observe that
\begin{gather*}
	\gamma_{1*}^s(V)= (\Phi_{i}^{-\frac{1}{c} t \sin s} \circ j_{i+1})_*(V) \\ + \Big((1-t)\sin\big(c\phi(p,q)+\theta_*\big) +t \sin\big(c\phi(p,q)+\theta_*+s\big)\Big) c V(\phi) \partial x \\ + \Big(-(1-t)\cos(c\phi(p,q)+\theta_*) -t \cos(c\phi(p,q)+\theta_*+s)\big)\Big) cV(\phi) \partial y \\
	= \left(\Phi_i^{-\frac{1}{c} t \sin s} \circ j_{i+1}\right)_*(V)
\end{gather*}
The last equality comes from the fact that $g$ is constant on $\partial \mathbb{D}^{n-k}_{2\epsilon} \times S^{k-1}$, so that $V(g)=0$.
Thus,
\begin{gather*}
	\left(\alpha_i + \tfrac{1}{c}(x dy - y dx)\right)(\gamma_{1*}^s(V)) = \alpha_i\left((\Phi_{i}^{-\frac{1}{c} t \sin s}\circ j_{i+1})_*(V)\right) \\
	= \left((\Phi_{i}^{-\frac{1}{c} t \sin s})^*\alpha_i\right)(j_{i+1 *} V) = \alpha_i (j_{i+1 *} V) = \lambda_i (j_{i+1 *} V) =0.
\end{gather*} 
The third equality holds since $\Phi_i^t$ is the Reeb flow on $\partial F_i$, and the others hold by definitions. 
This proves Equation \eqref{eqn gamma_1 legendrian 2}. 

In order to construct the second one-parameter family $\gamma_2^s$, we observe the following:
we note that $\Phi_i^{-\tfrac{1}{c} t \sin s}$ is a symplectomorphism on $F_i$.
By \cite[Lemma 11.2]{CE}, there is a function $h_s : F_i \to \mathbb{R}$ such that 
\begin{align}
	\label{eqn h_s}
	(\Phi_i^{-\frac{1}{c} t \sin s})^*(\lambda_i) = \lambda_i +d h_s.
\end{align}
Since on $\partial F_i$, $\Phi_i^{-\tfrac{1}{c} t \sin s}$ is the Reeb flow, $h_s|_{\partial F_i}$ is a constant function.
Thus, 
\[(\Phi_i^{-\frac{1}{c} t \sin s})^*(\alpha_i) = \alpha_i \text{  on  } \partial F_i.\]
We note that $\alpha_i := \lambda_i|_{\partial F_i}$.
Since $h_s$ is unique up to constant in Equation \eqref{eqn h_s}, we can choose $h_s$ such that $h_s|_{\partial F_i} \equiv 0$. 

We set $\gamma_2^s$ for $s \in [0,\pi]$ as follows:
\begin{gather}
	\label{eqn gamma_2}
	\gamma_2^s : \mathbb{D}^{n-k}_{2\epsilon} \times S^{k-1} \to \partial (F_i \times \mathbb{D}^2), \\
	\notag (p,q) \mapsto \Big(\left(\Phi_i^{-\frac{1}{c} \sin s}\circ j_{i+1}\right)(p,q), \cos\big(c\phi(p,q) + \theta_* + s + h_s(p,q)\big), \sin\big(c\phi(p,q) + \theta_* +s + h_s(p,q)\big) \Big).
\end{gather}

As similar to the case of $\gamma_1^s$, the following facts hold:
\begin{enumerate}
	\item[(iii)] $\operatorname{Im}(\gamma_2^s)$ is a Legendrian for any $s \in [0,\pi]$, and
	\item[(iv)] $\gamma_1^s(p,q,1) = \gamma_2^s(p,q)$ for all $(p,q) \in \partial \mathbb{D}^{n-k}_{2\epsilon} \times S^{k-1}$. 
\end{enumerate}
Since $\gamma_2^s$ is in the form of a lifted Legendrian, (iii) holds, and since $h_s|_{\partial F_i} \equiv 0$, (iv) holds.

Finally, we note that 
\[\mathrm{Im}\gamma_1^0 \cup \mathrm{Im}\gamma_2^0 = J_{i+1}(\mathbb{D}^{n-k}_{2\epsilon} \times S^{k-1}) \subset \Lambda_{i+1}'.\]
Then, we consider one-parameter family 
\[\left(\Lambda_{i+1}' \setminus (\mathrm{Im}\gamma_1^0 \cup \mathrm{Im}\gamma_2^0)\right) \cup (\mathrm{Im}\gamma_1^s \cup \mathrm{Im}\gamma_2^s),\]
that is parameterized by $s$. 
By smoothing each members of the one-parameter family, one have an one-parameter family of Legendrians starting $\Lambda_{i+1}'$. 
Let $\tilde{\Lambda}_{i+1}$ be the final Legendrian, i.e., the Legendrian for $s = \pi$.

\subsubsection{Attaching subcritical handles and induction hypotheses}
\label{subsubsection attaching subcritical handles}
Now, the rest for the inductive step is to attach two subcritical Weinstein handles $H_{i+1}^{ori}$ and $H_{i+1}^{n-1}$ and to check that the induction hypotheses (IH 1--6) hold after the attachments. 

We recall that by the construction above, after taking the Legendrian isotopy, a Legendrian $\Lambda_{i+1} \simeq \partial \overline{M}_i$ in the contact boundary of $W_i = F_i \times \mathbb{D}^2$ can be divided into three parts, $\Lambda_{i+1}^{sub}, \Lambda_{i+1}^{cri}$ and $\Lambda_{i+1}^{comp}$. 
After the above Legendrian isotopy, $\Lambda_{i+1}^{sub}$ is lying on the horizontal boundary of $W_i$ and is given as a product of a Legendrian on $\partial F_i$ and a diameter of $\mathbb{D}^2$.
The diameter is connecting two points 
\[\left(\cos(-c + \theta_*), \sin (-c + \theta_*)\right) \text{  and  } \left(\cos(-c + \theta_* + \pi), \sin (-c + \theta_* + \pi)\right).\]
See the formula in \eqref{eqn gamma_1}, which defines $\gamma_1^s$ for $s \in [0, \pi]$.
See also inequalities in \eqref{eqn inequality for IH 2} for the conditions on $\theta_*$.
Moreover, the Legendrian on $\partial F_i$ is also given by $\gamma_1^s$, and with the coordinates used in Section \ref{subsubsection crossing the base}, the Legendrian is written as 
\[\left\{\left(j_{i+1}(p,q)\right) | (p,q) \in \partial \mathbb{D}^{n-k}_{2\epsilon} \times S^{k-1}\right\}.\]

We attach two subcritical Weinstein handles 
\[H_{i+1}^{ori} \cup H_{i+1}^{n-1} \simeq D^*\left(S^{n-k-1} \times \mathbb{D}^k\right) \times D^*[0,1] \simeq \left(\check{H}_{i+1}^{ori} \cup \check{H}_{i+1}^{n-1}\right) \times \mathbb{D}^2,\]
along $\Lambda_{i+1}^{sub}$, or equivalently, the product of a Legendrian $\left(S^{n-k-1} \times \mathbb{D}^k\right)$ in $\partial F_i$ and the above diameter. 

It means that the fiber $F_{i+1}$ in the $(i+1)^{th}$ step of the inductive construction is obtained by attaching $\left(\check{H}_{i+1}^{ori} \cup \check{H}_{i+1}^{n-1}\right)$ to $F_i$ along the Legendrian on $\partial F_i$. 
Or equivalently, it is the same as attaching 
\[\left(\check{H}_{i+1}^{ori} \cup \check{H}_{i+1}^{n-1}\right) \simeq D^*\left(S^{n-k-1} \times \mathbb{D}^k\right)\]
along the boundary of the disk bundle of $A_{i+1}(S^{k-1})$. 
Note that the existence of the disk bundle is guaranteed by (IH 4) and Lemma \ref{lem disk bundle}. 
Moreover, the Lagrangian skeleton of $F_{i+1}$ is given as 
\[\mathrm{Skel}(F_{i+1}) = \mathrm{Skel}(F_i) \cup \left(L_{i+1} = S^{n-k-1} \times \mathbb{D}^k\right).\]
This construction also proves that at each boundary point of $\partial \overline{L}_{i+1}$, we would have a local chart that satisfies an arboreal singularity of $A_k$-tree type. 
We note that the local coordinate chart is given in Section \ref{subsubsection push to the corner}. 
It implies that {\em (IH 1) holds for the $(i+1)^{th}$ step}.

For (IH 2) and (IH 5), we observe that the Legendrian isotopy does not change $\partial h_j^n \subset \partial M_i$ for $j =0, \dots, i$. 
Thus, after taking the isotopy and attaching the subcritical handles, {\em for $j = 0, \dots, i$, (IH 2) and (IH 5) hold}, and it is enough to show (IH 2) and (IH 5) for $\partial h_{i+1}^n$.
We note that from the construction of $h_{i+1}^n$, one can see that $\partial h_{i+1}^n$ is a union of $\Lambda_{i+1}^{cri}$ and a part of $\partial h_{i+1}^{ori}$ and $\partial h_{i+1}^{n-1}$.

Since $h_{i+1}^{ori}$ and $h_{i+1}^{n-1}$ can be seen as the core Lagrangians of $H_{i+1}^{ori}$ and $H_{i+1}^{n-1}$, we would like to find the core Lagrangians first. 
From the equivalence given in \eqref{eqn subcritical parts}, we recall 
\[H_{i+1}^{ori} \cup H_{i+1}^{n-1} \simeq D^*\left(S^{n-k-1} \times \mathbb{D}^k\right) \times D^*[0,1],\]
again.
Then, the union of core Lagrangians of $H_{i+1}^{ori}$ and $H_{i+1}^{n-1}$ is given as the zero section of 
\[D^*\left(S^{n-k-1} \times \mathbb{D}^k\right) \times D^*[0,1],\]
i.e., the product of $L_{i+1} \subset F_{i+1}$ and the diameter connecting
\[\left(\cos(-c + \theta_*), \sin(-c + \theta_*)\right) \text{  and  } \left(\cos(-c + \theta_*+\pi), \sin(-c + \theta_*+\pi)\right).\]

The other part of $\partial h_{i+1}^n$, i.e., $\Lambda_{i+1}^{cri}$ is isotoped by $\gamma_2^s$. 
When one considers $\gamma_2^\pi$, the image is the isotoped $\Lambda_{i+1}^{cri}$.
And the Legendrian sphere equivalent to $\partial h_{i+1}^n$ is the union of the isotoped $\Lambda_{i+1}^{cri}$ and 
\[\left(L_{i+1}, \cos(-c + \theta_* + \pi), \sin (-c + \theta_* + \pi)\right) \subset F_{i+1} \times \mathbb{D}^2 \simeq W_{i+1}.\]

From Equations \eqref{eqn gamma_1} and \eqref{eqn gamma_2} defining $\gamma_1^s$ and $\gamma_2^s$, one can check the location of $\partial h_{i+1}^n$, and {\em it satisfies (IH 2)}. 
We note that the $F_i$ factor of $\gamma_1^\pi(p,q,t) \in \partial (F_i \times \mathbb{D}^2)$ is independent of $t$.
Thus, $h_\pi$ in Equation \eqref{eqn gamma_2} is constant. 
Moreover, $h_\pi$ could be defined to be the zero function along the contact boundary of $F_i$.
  
Moreover, one can easily see that $\partial h_{i+1}^n$ could be written as a Legendrian lift of $V_{i+1}$, where $V_{i+1}$ is a smoothing of $L_{i+1} \subset \mathrm{Skel}(F_{i+1})$ and the disk bundle of 
\[A_{i+1}(S^{k-1}) \subset \mathrm{Skel}(F_i) \subset \mathrm{Skel}(F_{i+1}).\]
It proves that {\em $\partial h_{i+1}^n$ satisfies (IH 5)}.

In order to check (IH 3) and (IH 6), we recall that 
\[\overline{M}_{i+1} = \overline{M}_i \cup h_{i+1}.\]
Thus, $\partial \overline{M}_{i+1}$ is the union of 
\[\partial h_{i+1} \setminus \partial_R h_{i+1} \text{  and  } \partial \overline{M}_i \setminus \partial_R h_{i+1}.\]
Let us find the corresponding part in $W_{i+1} = F_{i+1} \times \mathbb{D}^2$. 
The first part, $\partial h_{i+1} \setminus \partial_R h_{i+1}$, is contained in the boundary of $\partial h_{i+1}^{ori}$ and $\partial h_{i+1}^{n-1}$.
As we did above, we see $h_{i+1}^{ori}$ and $h_{i+1}^{n-1}$ as the core Lagrangians of $H_{i+1}^{ori}$ and $H_{i+1}^{n-1}$.
Then, the corresponding boundary part is given as 
\[\left(L_{i+1}, \cos(-c + \theta_*), \sin (-c + \theta_*)\right) \subset F_{i+1} \times \mathbb{D}^2 \simeq W_{i+1}.\]
The second part $\partial \overline{M}_i \setminus \partial_R h_{i+1}$ is denoted as $\Lambda_{i+1}^{comp}$. 
Thanks to the explicit formulas given in Sections \ref{subsubsection push to the corner} and \ref{subsubsection crossing the base}, one can find a Legendrian for the second part.

Since we have a Legendrian corresponding to $\partial \overline{M}_{i+1} \simeq \Lambda_{i+2} \subset \partial_\infty W_{i+1}$ explicitly, we can easily see that {\em (IH 3) and (IH 6) hold for the $(i+1)^{th}$ product Lefschetz fibration $W_{i+1} = F_{i+1} \times \mathbb{D}^2$.}

\begin{remark}
	\label{rmk Lagrangian bones}
	\mbox{}
	\begin{enumerate}
		\item Before discussing (IH 4), we would like to remark what parts of $\mathrm{Skel}(F_{i+1})$ correspond to $\partial \overline{M}_{i+1}$. 
		We note that by (IH 6) of the $i^{th}$ step, we have $\overline{V}_{i+1} \subset \mathrm{Skel}(F_i)$ corresponding to $\partial \overline{M}_i$, whose Legendrian lift is $\partial \overline{M}_i$. 
		And, from the construction above, we need to subtract the disk bundle of $A_{i+1}(S^{k-1})$ from $\overline{V}_{i+1}$, then add $L_{i+1} \subset \mathrm{Skel}(F_{i+1})$.
		The result $\L_{i+1} \sqcup \left(\overline{V}_{i+1} \setminus A_{i+1}(\mathbb{D}^{n-k \times S^{k-1}})\right)$ corresponds to$\partial \overline{M}_{i+1}$.  
		\item For a fixed $i$, one can see that $\mathrm{Skel}(F_i) = \overline{V}_{i+1} \cup \bigcup_{j=0}^i V_j$. 
		Moreover, it is easy to observe that every smooth point $p \in \mathrm{Skel}(F_i)$ is contained in exactly two Lagrangians in $\{V_0, \dots, V_i, \overline{V}_{i+1}\}$. 
		Also, Lagrangians do not have a self-intersecting point, i.e., $V_j$ and $\overline{V}_{i+1}$ are embedded Lagrangian. 
	\end{enumerate}
\end{remark}

Now, let us discuss (IH 4) for the $(i+1)^{th}$ step.
From the above arguments, one can observe that $\partial \overline{M}_{i+1}$ is Legendrian isotoped to a smoothing of the corresponding parts of the Lagrangian skeleton $\mathrm{Skel}(F_{i+1})$.
We note that the singularities of $\mathrm{Skel}(F_{i+1})$ can be seen as the boundary of $L_j$ for $j=1, \dots, i+1$.
In $\partial \overline{M}_{i+1}$, the singular part corresponds to the boundaries of the attaching regions of $h_j$ for $j =0, \dots, i+1$, i.e., $\partial_R h_j$. 
Without loss of generality, one can assume that the attaching sphere of $h_{i+2}$, as a submanifold of $\partial \overline{M}_{i+1}$, is transversal to the boundary of $\partial_R h_j$ for all $j =0, \dots, i+1$. 
It implies that {\em (IH 4) holds for the $(i+1)^{th}$ step}, and it completes the proof of the induction step. 

\begin{remark}
	\label{rmk abstract Lefschtz fibration}
	Let $D:=\{h_0, \dots, h_m\}$ be a handle decomposition of $M$, and let $\pi$ be the Lefschetz fibration of $T^*M$ obtained by applying Theorem \ref{thm main} to $D$. 
	Then, from the inductive constructions given in Section \ref{section the proof of Theorem main}, one can easily see that there is one-to-one relationship between $D$ and the collection of singular values of $\pi$. 
	Moreover, the order of handles in $D$ gives some restriction on the cyclic order of the vanishing cycles. 
	To be more precise, if $\pi$ can be written as an abstract Lefschetz fibration 
	\[(F:V_m, \dots, V_0),\]
	then, one can easily check that for $i = 0, \dots, N$, $V_i$ is a vanishing cycle corresponding to the critical Weinstein handle $H_i^n$, and for $i > N$, the $V_i$ corresponds to the collection of critical Weinstein handles $H_i$.
	Moreover, one can see that if $N< i < j \leq m$, then $V_i$ and $V_j$ are disjoint to each other.
	See Remark \ref{rmk Lagrangian bones} (2). 
	Thus, one can exchange the cyclic order of $V_i$ and $V_j$ without changing the vanishing cycles by Hurwitz move.
	Also, one can consider the order on $\{h_{N+1}, \dots, h_m\}$ in $D$ can be freely reordered, since all handles in the collection have the maximal index. 
\end{remark}

\section{Examples}
\label{section examples}
In Section \ref{section examples}, we will give examples of the inductive construction given in Section \ref{section the proof of Theorem main}. 

\subsection{An example of Theorem \ref{thm main}}
\label{subsect an example of the main theorem}
The example manifold $M$ we consider is the $2$-dimensional torus equipped with a specific handle decomposition $D$.
The given handle decomposition $D$ consists of one $0$-handle, two $1$-handles, and one $2$-handle as described in Figure \ref{figure decomposition of a torus}, a). 
The induced handle decomposition $\tilde{D}$ of $M$ is also descried in Figure \ref{figure decomposition of a torus}, b).

\begin{figure}[h]
	\centering
	\input{decomposition_of_a_torus.txt}		
	\caption{a) The square, both sides (resp.\ the top and the bottom) are identified to each other, is the torus. The torus is decomposed into one $0$-handle $h_0$ (center circle), two $1$-handles $h_1, h_2$ whose boundaries are red and blue lines respectively, and one $2$-handle $h_3$ (the rest). b) It describes the induced handle decomposition $\tilde{D}$ of a torus when $D$ is the given decomposition in a). In other words, for $i=1,2$, an $1$-handle $h_i$ is divided into two $1$-handles $h_i^{ori}, h_i^1$ and one $2$-handle $h_i^2$.}
	\label{figure decomposition of a torus}
\end{figure}
Figure \ref{figure M_i} describes $M_0, \dots, M_3$ defined in Equation \eqref{eqn def of M_i}.
\begin{figure}[h]
	\centering
	\input{M_i.txt}		
	\caption{a) describes $M_0$, i.e., union of $h_0^{ori}$ and $h_0^1$. Similarly, in b), c), and d) describe $M_1, M_2$ and $M_3$, respectively. For each $M_i$, the labeled handles are in $M_i \setminus M_{i-1}$}
	\label{figure M_i}
\end{figure}

\subsubsection{The base step}
The base step is to construct a product space $W_0 =F_0 \times \mathbb{D}^2$ which is equivalent to $T^*M_0$. 
As seen in Section \ref{subsection the base and the final steps}, $F_0 \simeq D^*S^1$ and $W_0 \simeq D^*S^1 \times \mathbb{D}^2$. 

Under the equivalence $T^*M_0 \simeq W_0$, the outer (resp.\ inner) boundary of $M_0$ is identified with the zero section of the fiber $\pi_0^{-1}(1) \simeq F_0 = D^*S^1$ (resp.\ $\pi_0^{-1}(-1)$). 
By using the notation in Section \ref{section the proof of Theorem main}, let $\partial \overline{M}_0$ denote the outer boundary of $M_0$ in $\pi_0^{-1}(1)$.
We would like to a Legendrian isotopy of $\partial \overline{M}_0=:\Lambda_1$, in order to construct $W_1 \simeq F_1 \times \mathbb{D}^2$ from $W_0 \simeq F_0 \times \mathbb{D}^2$. 

We note that $\Lambda_1$ is a Legendrian lift of the exact Lagrangian $L_0$ in the fiber $F_0$, where $L_0$ is the zero section of $F_0 \simeq D^*S^1$. 
Our plan is to take an exact Lagrangian isotopy of $L_0$, instead of $\Lambda_1$.
Then, by lifting the Lagrangian isotopy, one can obtain a Legendrian isotopy starting from $\Lambda_1$.

\subsubsection{Push to the corner of $W_0$.}
\label{subsubsection example 1}
The next step is to push the Legendrian $\Lambda_1$ to the corner of $W_0$, or equivalently, to push the exact Lagrangian $L_0$ to the boundary of $F_0$.

First, we specify the corresponding part of $L_0$ to $\partial_L H_1^{ori}$ and $\partial_L H_1^1$. 
We note that 
\[\partial_L H_1^{ori} = \partial_Rh_1^{ori}, \partial_L H_1^{n-1} = \partial_R h_1^{n-1}.\] 
Also, we recall that $h_1 = h_1^{ori} \cup h_1^{n-1} \cup h_1^2$. 

We remark the following:
Since $h_1$ is an $1$-handle, the attaching boundary is homeomorphic to $S^0 \times \mathbb{D}^1$.
Without loss of generality, one can identify $\partial_R h_1$ with $S^0 \times \mathbb{D}^1_{2\epsilon}$ where $\mathbb{D}^k_r$ means a $k$-dimensional disk of the radius $r$.
From the conditions (i) and (ii) in Section \ref{subsect technical statement}, one can assume that 
\[h_1^2 \cap \partial h_1 \simeq S^0 \times \mathbb{D}^1_\epsilon \subset S^0 \times \mathbb{D}^1_{2 \epsilon} \simeq \partial_R h_1.\] 

Under the identification $\Lambda_1 \simeq L_0$, one could embed $\partial_R h_1$ into $L_0$.
For convenience, let $\overline{j}_1 : S^0 \times \mathbb{D}^1_{2\epsilon} \hookrightarrow F_0$ denote the embedding of $\partial_R h_1 \hookrightarrow \partial L_1 \simeq \partial \Lambda_1$.  
Moreover, one can extend $\overline{j}_1$ slightly. 
Let $\overline{j}_1$ denote the extended embedding 
\[\overline{j}_1 : S^0 \times \mathbb{D}^1_{3\epsilon} \hookrightarrow F_0.\]
Figure \ref{figure fiber F_0} describes this. 
\begin{figure}[h]
	\centering
	\input{fiber_F_0.txt}		
	\caption{a) is the $M_1$ in b) of Figure \ref{figure M_i}. The outer circle of $M_0$ is $\Lambda_1 = \partial \overline{M}_0$ and the red (resp.\ blue) parts of $\Lambda_1$ are $\partial_R h_1^{ori}$ and $\partial_R h_1^1$ (resp.\ $\partial_R h_1^2 \cap \partial h_1$). 
		b) The rectangle is $F_0 \simeq D^*S^1$ and the zero section is $L_0$. Under $\Lambda_1 \simeq L_0$, the red and blue curves in b) correspond to the red and blue in a).}
	\label{figure fiber F_0}
\end{figure}

In order to modify $\mathrm{Im}(\overline{j}_1)$, we fix a function $g$ defined on $\mathrm{Im}(\overline{j}_1)$ as follows:
\begin{gather*}
	g : \mathbb{D}_{3 \epsilon} \times S^0 \to \mathbb{R},\\
	g(p,q) = - \varphi(|q|) \text{  for  } (p,q) \in \mathbb{D}^{n-k}_{3 \epsilon} \times S^0,
\end{gather*}
where $\varphi$ is the auxiliary function defined in Section \ref{subsection Legendrian isotopy}.
We note that in Section \ref{subsection Legendrian isotopy}, we used $\phi(p,q) = \varphi(|q|)$ instead of $g(p,q) = - \varphi(|q|)$.

Let $L_0'$ be the Lagrangian obtained from $L_0$ by replacing $\mathrm{Im}(\overline{j}_1)$ with the graph of $d g$. 
As in Section \ref{subsection Legendrian isotopy}, $L_0$ and $L_0'$ are Hamiltonian isotopic, and the Hamiltonian isotopy connecting $L_0$ and $L_0'$ induces a Legendrian isotopy connecting their Legendrian lifts. 
Let the new Legendrian obtained by isotoping $\Lambda_1$ be denoted by $\Lambda_1'$. 
Figure \ref{figure L_1'}, a) is $L_0'$ in $F_0$ and b) is the projection (to the base) image of $\Lambda_1'$.
\begin{figure}[h]
	\centering
\begingroup%
  \makeatletter%
  \providecommand\color[2][]{%
    \errmessage{(Inkscape) Color is used for the text in Inkscape, but the package 'color.sty' is not loaded}%
    \renewcommand\color[2][]{}%
  }%
  \providecommand\transparent[1]{%
    \errmessage{(Inkscape) Transparency is used (non-zero) for the text in Inkscape, but the package 'transparent.sty' is not loaded}%
    \renewcommand\transparent[1]{}%
  }%
  \providecommand\rotatebox[2]{#2}%
  \newcommand*\fsize{\dimexpr\f@size pt\relax}%
  \newcommand*\lineheight[1]{\fontsize{\fsize}{#1\fsize}\selectfont}%
  \ifx\svgwidth\undefined%
    \setlength{\unitlength}{283.46456693bp}%
    \ifx\svgscale\undefined%
      \relax%
    \else%
      \setlength{\unitlength}{\unitlength * \real{\svgscale}}%
    \fi%
  \else%
    \setlength{\unitlength}{\svgwidth}%
  \fi%
  \global\let\svgwidth\undefined%
  \global\let\svgscale\undefined%
  \makeatother%
  \begin{picture}(1,0.84)%
    \lineheight{1}%
    \setlength\tabcolsep{0pt}%
    \put(0,0){\includegraphics[width=\unitlength,page=1]{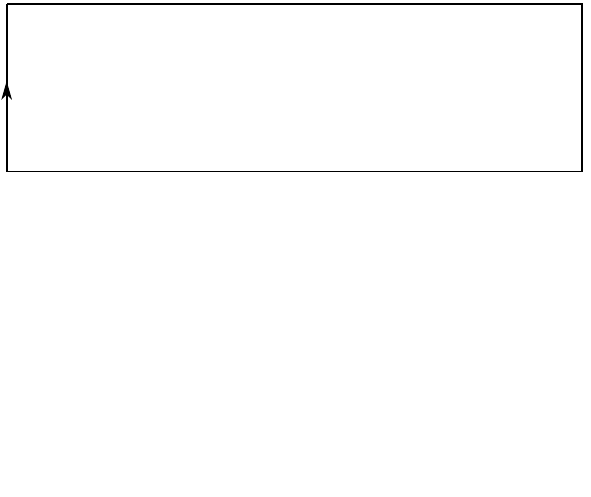}}%
    \put(0.43798977,0.51090279){\makebox(0,0)[lt]{\lineheight{1.25}\smash{\begin{tabular}[t]{l}a. $L_0'$\end{tabular}}}}%
    \put(0,0){\includegraphics[width=\unitlength,page=2]{L_1.pdf}}%
    \put(0.44473812,-0.00106651){\makebox(0,0)[lt]{\lineheight{1.25}\smash{\begin{tabular}[t]{l}b. $\pi_0(\Lambda_0')$\end{tabular}}}}%
  \end{picture}%
\endgroup%

	\caption{a) is $L_0'$ in $F_0$. The colored parts are matched to Figure \ref{figure fiber F_0}. b) is the image of $\Lambda_1'$ under $\pi_0$.}		
	\label{figure L_1'}
\end{figure}

As we did in \eqref{eqn parametrization}, one can formulate the part in $\Lambda_1'$, which corresponds to $\partial_R h_1$, or equivalently, the image of $\overline{j}_1$, by a map $j_1$. 
See \eqref{eqn parametrization}.
\begin{gather*}
	j_1(p,q) = \left((p,q), dg_{(p,q)}\right) \in F_0.
\end{gather*}
Similarly, let $J_1$ is also a map defined on $S^0 \times \mathbb{D}^1_{3 \epsilon}$ such that 
\begin{gather*}
	J_1(p,q) = \big(j_1(p,q), \cos(-c (g\circ j_1)(p,q)), \sin(-c (g\circ j_1)(p,q))\big) \in F_0 \times \mathbb{D}^2,
\end{gather*}
where the first component is a point in $F_0$, and the second and the last components are coordinated by the standard $(x,y)$-coordinates of $\mathbb{D}^2$.

\subsubsection{Crossing the base}
The next step is to take a Legendrian isotopy which makes our Legendrian $\Lambda_1'$ crosses the base. 
Since \eqref{eqn gamma_1} and \eqref{eqn gamma_2} give a Legendrian isotopy explicitly, we skip to explain how to isotope our example. 

Let $\tilde{\Lambda}_1$ denote the Legendrian obtained after taking a such isotopy, again.
Then, Figure \ref{figure tilde_L_1} describes $\pi_0(\tilde{\Lambda}_1)$ and $\pi_0(\tilde{\Lambda}_s)$ for some $s \in (0, 1)$, where $\Lambda_s$ denotes the Legendrian obtained by connecting $\gamma_1^{s\pi}$ and $\gamma_2^{s \pi}$. 
See Equations \eqref{eqn gamma_1} and \eqref{eqn gamma_2} for the definitions of $\gamma_1^{s\pi}$ and $\gamma_2^{s \pi}$. 

\begin{figure}[h]
	\centering
	\input{tilde_L_1.txt}
	\caption{The projection images of $\tilde{\Lambda}_s$ (left) and $\tilde{\Lambda}_1$ (right) are given. The red (resp.\ blue) part corresponds to $\mathrm{Im}(\gamma_1^s)$ and $\mathrm{Im}(\gamma_1^{\pi})$ (resp.\ $\mathrm{Im}(\gamma_2^s)$ and $\mathrm{Im}(\gamma_2^\pi)$). 
		We also note that the red and blue parts in Figure \ref{figure tilde_L_1} are connected by Legendrian isotopy to the red and blue parts in Figure \ref{figure L_1'}.}		
	\label{figure tilde_L_1}
\end{figure}

\subsubsection{Attaching subcritical handles}
\label{subsubsection example 2}
The next step is to attach subcritical handles $H_1^{ori}$ and $H_1^1$. 
We attach them along $\partial_L H_1^{ori}$ and $\partial_L H_1^1$.
More precisely, from the starting data, one has $\partial_L H_1^{ori}, \partial_L H_1^1 \subset \Lambda_1$. 
Let $\phi_t$ be the Legendrian isotopy constructed above such that $\phi_1(\Lambda_1) = \tilde{\Lambda}_1$.
Then, we attach $H_1^{ori}$ and $H_1^1$ along $\phi_1(\partial_L H_1^{ori})$ and $\phi_1(\partial_L H_1^1)$. 

We note that $\phi_1(\partial_L H_1^{ori}), \phi_1(\partial_L H_1^1) \subset \partial F_0 \times \mathbb{D}^2$, i.e., the horizontal boundary of $W_0 = F_0 \times \mathbb{D}^2$. 
Also, we note that 
\[H_1^{ori} \simeq \check{H}_1^{ori} \times \mathbb{D}^2, H_1^1 \simeq \check{H}_1^1 \times \mathbb{D}^2,\]
where $\check{H}_1^{ori}, \check{H}_1^1$ are 2-dimensional index 1 Weinstein handles. 

By attaching subcritical handles to $W_0$, we obtains 
\[W_1 := W_0 \cup H_1^{ori} \cup H_1^1 = \left( F_0 \cup \check{H}_1^{ori} \cup \check{H}_1^1\right) \times \mathbb{D}^2.\]
Since $W_1$ is a product of two Weinstein domains, we have a product Lefschetz fibration $\pi_1 : W_1 \to \mathbb{D}^2$. 
The regular fiber $F_1$ of $\pi_1$ is given in Figure \ref{figure F_1}.
Moreover, the construction of $W_1$ induces that $W_1$ is equivalent to $T^*M_1$. 
\begin{figure}[h]
	\centering
	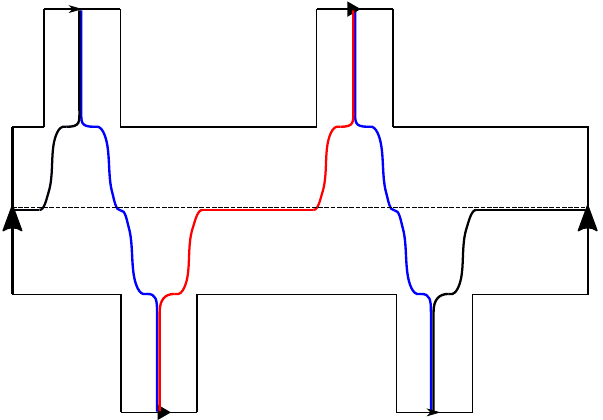
	\caption{The regular fiber $F_1$ is given. The edges with arrows are identified, and the black, blue, red and dashed curves are the images of $\partial M_1$ under $pr_1$.}		
	\label{figure F_1}
\end{figure}

In Figure \ref{figure M_i}, one can observe that $\partial M_1$ has four components.
The images of $\partial M_1$ under $pr_1 : W_1 \to F_1$, after smoothing, are given in Figure \ref{figure F_1}.
Also, one can observe that two of the four components will be used for attaching critical handles $H_0^2, H_1^2$. 

The attaching sphere of $H_0^2$ (resp.\ $H_1^2$) corresponds to the dashed (resp.\ blue) curve in Figure \ref{figure F_1}.
Moreover, by $\pi_1$, the attaching sphere of $H_0^2$ (resp.\ $H_1^2$) is projected to $-1$ (resp.\ a small interval $\left(\pi-\theta_1, \pi - \theta_1-c\right)$), where $\theta_1$ is a constant depending on the choice of the small positive number $c$ and the auxiliary function $\varphi$ above).

To be more precise, let us remark that one can check that $\theta_1 = c \varphi(2 \epsilon)$.
Thus, one can obtain an arbitrarily small $\theta_1$ by choosing sufficiently small $c$. 
By choosing a sufficiently small $\theta_1$, for example, by choosing $\theta_1$ such that $\theta_1 < \tfrac{1}{2} \theta_0$ where $\theta_0$ is a fixed constant satisfying $3 \theta_0 < \pi$, then one can check that the induction hypotheses (IH 2) will hold. 

The other two components of $\partial M_1$ are projected down to the interval 
\[\{ e^{-i\theta} \hs | \hs \theta \in [-\theta_1, 0]\} \subset \partial \mathbb{D}^2,\]
by $\pi_1$.
By a proper Legendrian isotoping, one can move them a little bit so that, after moving, the Legendrians are projected to 
\[\{ e^{-i\theta} \hs | \hs \theta \in [-\theta_0, 0)\} \subset \partial \mathbb{D}^2.\]
The Legendrian isotopy that one needs to apply is taking positive/negative Reeb flows of the Legendrians.

\subsubsection{Construction of $W_2$ from $W_1$}
By applying the inductive step, one can construct $W_2$ from $W_1$. 
Since the procedure is almost the same as the contents of Sections \ref{subsubsection example 1}--\ref{subsubsection example 2}, we omit the details.
See Figure \ref{figure F_2} for the resulting product space $W_2$. 

\begin{figure}[h]
	\centering
\begingroup%
  \makeatletter%
  \providecommand\color[2][]{%
    \errmessage{(Inkscape) Color is used for the text in Inkscape, but the package 'color.sty' is not loaded}%
    \renewcommand\color[2][]{}%
  }%
  \providecommand\transparent[1]{%
    \errmessage{(Inkscape) Transparency is used (non-zero) for the text in Inkscape, but the package 'transparent.sty' is not loaded}%
    \renewcommand\transparent[1]{}%
  }%
  \providecommand\rotatebox[2]{#2}%
  \newcommand*\fsize{\dimexpr\f@size pt\relax}%
  \newcommand*\lineheight[1]{\fontsize{\fsize}{#1\fsize}\selectfont}%
  \ifx\svgwidth\undefined%
    \setlength{\unitlength}{306.14173228bp}%
    \ifx\svgscale\undefined%
      \relax%
    \else%
      \setlength{\unitlength}{\unitlength * \real{\svgscale}}%
    \fi%
  \else%
    \setlength{\unitlength}{\svgwidth}%
  \fi%
  \global\let\svgwidth\undefined%
  \global\let\svgscale\undefined%
  \makeatother%
  \begin{picture}(1,1.26851852)%
    \lineheight{1}%
    \setlength\tabcolsep{0pt}%
    \put(0,0){\includegraphics[width=\unitlength,page=1]{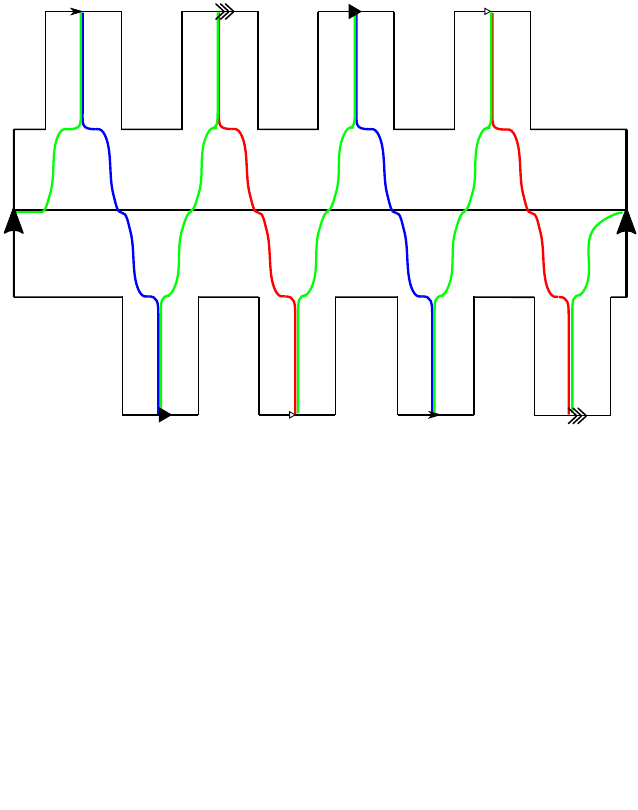}}%
    \put(0.29797354,0.56124041){\makebox(0,0)[lt]{\lineheight{1.25}\smash{\begin{tabular}[t]{l}a. projection images under $pr_2$\end{tabular}}}}%
    \put(0,0){\includegraphics[width=\unitlength,page=2]{F_2.pdf}}%
    \put(0.29796923,0.01306948){\makebox(0,0)[lt]{\lineheight{1.25}\smash{\begin{tabular}[t]{l}b. projection images under $\pi_2$\end{tabular}}}}%
  \end{picture}%
\endgroup%

	\caption{a) is the fiber $F_2$ together with $pr_2(\partial M_2)$. The colored curves are images of $\partial M_2$ under $pr_2$. b) is the base $\mathbb{D}^2$ together with $\pi_2(\partial M_2)$. The images of the same component of $\partial M_2$ are in the same color in a) and b).}		
	\label{figure F_2}
\end{figure}

\subsubsection{Attaching critical handles.}
The product space $W_2 \simeq F_2 \times \mathbb{D}^2$ is equivalent to $T^*M_2$.
Then, $\partial M_2$ are identified with a union of Legendrian spheres.
We note that $\partial M_2$ consists of four circles, thus, one has four Legendrian spheres on $\partial W_2$. 
The projected images of those four Legendrians, under $pr_2$ and $\pi_2$, are given in Figure \ref{figure F_2}.

With Figure \ref{figure F_2}, one can attach critical handles $H_0^2, H_1^2, H_2^2$ and $H_3$ along $\partial M_2$, by \cite[Proposition 8.1]{BEE}.
Then, one has a Lefschetz fibration of $T^*M$ with the fiber $F_2$ and four singular values. 
The four vanishing cycles are each Lagrangian spheres in $F_2$ given in Figure \ref{figure F_2} a), and their cyclic order is determined by Figure \ref{figure F_2} b).

\subsection{More examples}
\label{subsect examples}
In this subsection, we give more examples. 

The first example is the cotangent bundle of $\mathbb{RP}^2$. 
We consider the simplest handle decomposition of $\mathbb{RP}^2$, i.e., the handle decomposition consisting of one $0$-handle, one $1$-handle, and one $2$-handle. 
Let the handle decomposition be denoted by 
\[D = \{h_0, h_1, h_2\}.\]
Then, the induced $\tilde{D}$ and $W_{\tilde{D}}$ are 
\begin{gather*}
	\tilde{D}=\{h_0^{ori}, h_0^1, h_1^{ori}, h_1^1, h_0^2, h_1^2, h_2\}, \\
	W_{\tilde{D}} = \{H_0^{ori}, H_0^1, H_1^{ori}, H_1^1, H_0^2, H_1^2, H_2\}.
\end{gather*}

We note that the resulting Lefschetz fibration has three singular values. 
Also, the regular fiber admits a Weinstein handle decomposition
\[\{\check{H}_0^{ori}, \check{H}_0^1, \check{H}_1^{ori}, \check{H}_1^1\}.\]
The resulting Lefschetz fibration $\pi: T^*\mathbb{RP}^2 \to \mathbb{C}$ is given in Figure \ref{figure RP^2}.

\begin{figure}[h]
	\centering
\begingroup%
  \makeatletter%
  \providecommand\color[2][]{%
    \errmessage{(Inkscape) Color is used for the text in Inkscape, but the package 'color.sty' is not loaded}%
    \renewcommand\color[2][]{}%
  }%
  \providecommand\transparent[1]{%
    \errmessage{(Inkscape) Transparency is used (non-zero) for the text in Inkscape, but the package 'transparent.sty' is not loaded}%
    \renewcommand\transparent[1]{}%
  }%
  \providecommand\rotatebox[2]{#2}%
  \newcommand*\fsize{\dimexpr\f@size pt\relax}%
  \newcommand*\lineheight[1]{\fontsize{\fsize}{#1\fsize}\selectfont}%
  \ifx\svgwidth\undefined%
    \setlength{\unitlength}{306.14173228bp}%
    \ifx\svgscale\undefined%
      \relax%
    \else%
      \setlength{\unitlength}{\unitlength * \real{\svgscale}}%
    \fi%
  \else%
    \setlength{\unitlength}{\svgwidth}%
  \fi%
  \global\let\svgwidth\undefined%
  \global\let\svgscale\undefined%
  \makeatother%
  \begin{picture}(1,1.26851852)%
    \lineheight{1}%
    \setlength\tabcolsep{0pt}%
    \put(0.29797353,0.56124044){\makebox(0,0)[lt]{\lineheight{1.25}\smash{\begin{tabular}[t]{l}a. Fiber of $\pi$ with vanishing cycles\end{tabular}}}}%
    \put(0,0){\includegraphics[width=\unitlength,page=1]{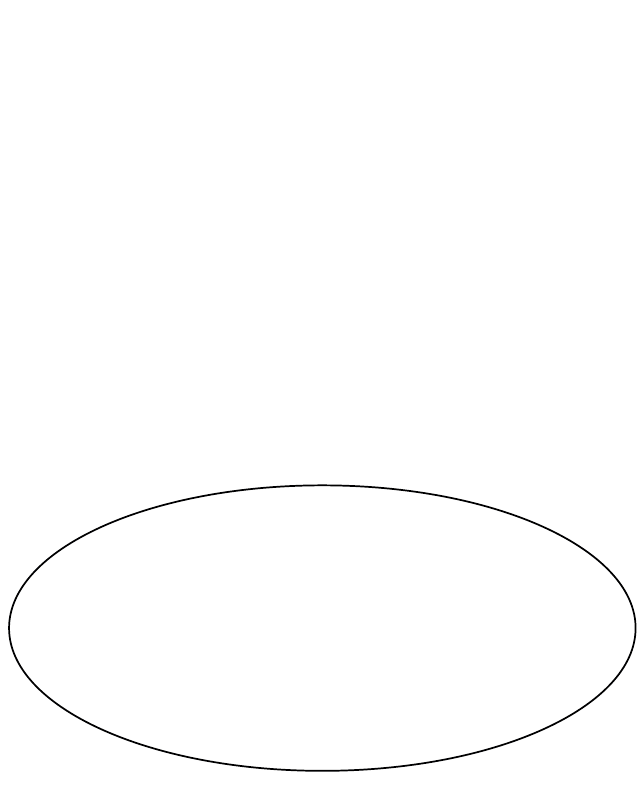}}%
    \put(0.40796922,0.01306944){\makebox(0,0)[lt]{\lineheight{1.25}\smash{\begin{tabular}[t]{l}b. base of $\pi$\end{tabular}}}}%
    \put(0,0){\includegraphics[width=\unitlength,page=2]{RP1.pdf}}%
  \end{picture}%
\endgroup%

	\caption{a) The regular fiber of $\pi$ with vanishing cycles. b) The base of $\pi$ with singular values. Colors indicate the correspondence of singular values and vanishing cycles.}		
	\label{figure RP^2}
\end{figure}

The next example is the three-dimensional torus 
\[\mathbb{D}^3 = S^1 \times S^1 \times S^1 = \mathbb{R} / \mathbb{Z} \times \mathbb{R} / \mathbb{Z} \times \mathbb{R} / \mathbb{Z}.\]
The last term of the above equations says that $\mathbb{T}^3$ is a cube, i.e., 
\[[0,1] \times [0,1] \times [0,1],\] with identified boundary. 
From this, one can easily consider the handle decomposition of $\mathbb{T}^3$ having one $0$-handle (corresponding to the unique vertex of the cube), three $1$-handles (corresponding to edges of the cube), three $2$-handles (corresponding to two-faces of the cube), and one $3$-handle.

Let 
\[D=\{h_0, h_1, h_2, h_3, h_4, h_5, h_6, h_7\}\]
denote the handle decomposition, where $h_0$ is the $0$-handle, $h_1, h_2, h_3$ are $1$-handles corresponding to the edges 
\[\mathbb{R}/\mathbb{Z} \times \{0\} \times \{0\}, \{0\} \times \mathbb{R}/\mathbb{Z}  \times \{0\}, \{0\} \times \{0\} \times \mathbb{R}/\mathbb{Z},\] 
$h_4, h_5, h_6$ are $2$-handles corresponding to the faces
\[\mathbb{R}/\mathbb{Z} \times \mathbb{R}/\mathbb{Z} \times \{0\}, \mathbb{R}/\mathbb{Z} \times \{0\} \times \mathbb{R}/\mathbb{Z},  \{0\} \times \mathbb{R}/\mathbb{Z} \times \mathbb{R}/\mathbb{Z},\] 
and $h_7$ is the $3$-handle.
Then,
\begin{gather*}
	\tilde{D}=\{h_0^{ori}, h_0^2, \dots, h_6^{ori}, h_6^2, h_0^3, \dots, h_6^3, h_7\}, \\
	W_{\tilde{D}} = \{H_0^{ori}, H_0^2, \dots, H_6^{ori}, H_6^2, H_0^3, \dots, H_6^2, H_7\}.
\end{gather*}

By the inductive construction, the resulting Lefschetz fibration has seven singular values and the regular fiber admits a Weinstein handle decomposition 
\[\{\check{H}_0^{ori}, \check{H}_0^2, \dots, \check{H}_6^{ori}, \check{H}_6^2\}.\]
We note that, in the above Weinstein handle decomposition of the fiber $F$, $\check{H}_0^{ori}$ is the unique $0$-handle, $\check{H}_1^{ori}, \dots, \check{H}_3^{ori}$ are $1$-handles, and others are $2$-handles. 
Thus, in a Kirby diagram for $F$, there are three pairs of spheres corresponding to three $1$-handles, and there are also ten Legendrian knots on $\partial \check{H}_0^{ori}$, i.e., the boundary of the zero handle for $F$ (or more precisely, their front projections), corresponding to ten $2$-handles. 

Moreover, we also note that the Legendrian knot corresponding to $\check{H}_0^{ori}$ can be drawn as a Legendrian unknot since $H_0^{ori} \cup H_0^2$ is equivalent to the cotangent bundle of $\mathbb{D}^1 \times S^2$. 
Similarly, for $i= 1, 2, 3$, the Legendrian knot corresponding $\check{H}_i^2$ can be drawn as a Legendrian unknot after sliding to $1$-handle $\check{H}_i^{ori}$. 

For $i=4, 5, 6$, the attaching spheres of $\check{H}_i^{ori}$ and $\check{H}_i^2$ are parallel to each other. 
Moreover, when one consider the handle decomposition of 
\[\{\check{H}_0^{ori}, \check{H}_1^{ori}, \check{H}_2^{ori}, \check{H}_4^{ori}\},\]
one can observe that the decomposition should be a handle decomposition for $T^*\mathbb{T}^2$. 

In order to draw a complete Kirby diagram, one should analyze how Legendrian links for $\check{H}_4^{ori}, \check{H}_5^{ori}, \check{H}_6^{ori}$ are related to each other. 
It could be achieved by analyzing the handle decomposition with respect to the product coordinate of 
\[\mathbb{T}^3 = \mathbb{R} / \mathbb{Z} \times \mathbb{R} / \mathbb{Z} \times \mathbb{R} / \mathbb{Z}.\]
However, we omit the procedure since it would be complicated.

\subsection{The case of cotangent bundles of surfaces}
\label{subsection the case of cotangetn bundles of surfaces}
The construction of Lefschetz fibration given in Section \ref{section the proof of Theorem main} inductively constructs Lefschetz fibrations of cotangent bundles from handle decompositions of the zero sections.
In this subsection, we focus on the case of cotangent bundles of surfaces, and we describe how to read off the resulting fibers and vanishing cycles directly from the input handle decompositions. 
We note that in the literature, there exists a construction of Lefschetz fibrations for cotangent bundles of surfaces, proven by Johns \cite{Johns}.
After introducing the practical recipe, we compare our construction with the construction in \cite{Johns} in this subsection.

Before proceeding further, let us list the reasons why we are describing the direct construction only for the lowest dimensional case.
\begin{itemize}
	\item First of all, in the inductive step for construction $F_{i+1}$ to $F_i$, we added $D^*\left(S^{n-k-1} \times \mathbb{D}^k\right)$ to $F_i$ if the handle $h_{i+1}$ has index $k < n$. 
	If the zero section is $2$-dimensional, then the only possible $k$ is $1$ for the induction step under the assumption that there exists a unique zero handle. 
	Thus, we add the same block  $D^*\left(S^0 \times \mathbb{D}^1\right)$ for each induction step, and it makes the construction simpler. 
	\item More importantly, by handle movements, we can assume that all $1$-handles in the input handle decomposition are attached to the unique zero handle. 
	It means that, when we construct $F_{i+1}$ from $F_i$, we attach the block $D^*\left(S^0 \times \mathbb{D}^1\right)$ to $F_0 \subset F_{i+1}$. 
	Thus, we can describe the attachment to $F_0$ part of $F_i$, in the $(i+1)^{th}$ inductive step, without constructing whole $F_i$. 
\end{itemize}

\noindent {\em Construction of the fiber}:
Now, let $M$ denote a surface and let 
\[\mathcal{H} = \{h_0, h_1, \dots, h_N, h_{N+1}, \dots, h_m\}\]
denote a given handle decomposition of $M$. 
Let us assume also that the input handle decomposition satisfies the following condition:
In the handle decomposition, the number of $0$-handle is one, and all $1$-handles are attached to the unique $0$-handle. 
Then, as we did in the previous sections, $h_0$ is the unique $0$-handle, $h_1, \dots, h_N$ are $1$-handles, and the other handles are $2$-handles. 

We note that by the base step, $F_0 \simeq D^*S^1$ and the zero section $S^1$ can be identified with the boundary of $h_0$. 
We also point out that the attaching sphere for an $1$-handle is $S^0 \simeq \text{two points}$.
Thus, if one identifies $\partial h_0 \simeq S^1 \simeq \mathbb{R} / \mathbb{Z}$, then the attachment of $h_j$ for $j =1, \dots, N$ can be encoded by two numbers 
\[0 \leq \theta_{j,1} < \theta_{j,2} <1, \text{  such that  } \theta_{j,a} \neq \theta_{i, b} \text{  for all } j \neq i \in \{1, \dots, N\}, a, b \in \{1,2\}. \]
a
Now, we choose two small numbers $\epsilon, \delta >0$ such that
\[\left\{[\theta_{j,a}-\epsilon-\delta, \theta_{j,a}+\epsilon+\delta] | j = 1, \dots, N, a =1, 2\right\}, \left\{[\theta_{j,a}+\epsilon-\delta, \theta_{j,a}+\epsilon+\delta] | j = 1, \dots, N, a =1, 2\right\}\] 
are collection of disjoint intervals.

For each $j = 1, \dots, N$, we will attach $D^*(S^0 \times \mathbb{D}^1)$ to $F_0$. 
To describe the attachment, we use the following identifications:
\begin{gather*}
	D^*S^1 \simeq S^1 \times [-1, 1], \text{  and  } D^*(S^0 \times \mathbb{D}^1) \simeq D^*\mathbb{D}^1 \sqcup D^*\mathbb{D}^1 \simeq [-\delta, \delta] \times [1,2] \sqcup [-\delta, \delta] \times [1,2].
\end{gather*}

For $j= 1, \dots, N$, if $h_j$ is attached to $h_0$ without twisting, i.e., the union of two handles $h_0$ and $h_j$ is orientable, then we attach two $[-\delta, \delta] \times [1,2]$ to $S^1 \times [-1, 1]$ by identifying 
\begin{itemize}
	\item $[-\delta,\delta] \times \{1\}$ to $[\theta_{j,1} -\epsilon - \delta, \theta_{j,1} -\epsilon +\delta] \times \{1\} \subset S^1 \times [-1,1]$ and $[-\delta,\delta] \times \{2\}$ to $[\theta_{j,2} +\epsilon - \delta, \theta_{j,2} +\epsilon +\delta] \times \{-1\} \subset S^1 \times [-1,1]$ for the first $[-\delta, \delta] \times [1,2]$, and 
	\item $[-\delta,\delta] \times \{1\}$ to $[\theta_{j,1} +\epsilon - \delta, \theta_{j,1} +\epsilon +\delta] \times \{-1\} \subset S^1 \times [-1,1]$ and $[-\delta,\delta] \times \{2\}$ to $[\theta_{j,2} -\epsilon - \delta, \theta_{j,2} -\epsilon +\delta] \times \{1\} \subset S^1 \times [-1,1]$ for the second $[-\delta, \delta] \times [1,2]$.
\end{itemize}
For $j= 1, \dots, N$, if $h_j$ is attached to $h_0$ with twisting, i.e., the union of two handles $h_0$ and $h_j$ is nonorientable, then we attach two $[-\delta, \delta] \times [1,2]$ to $S^1 \times [-1, 1]$ by identifying 
\begin{itemize}
	\item $[-\delta,\delta] \times \{1\}$ to $[\theta_{j,1} -\epsilon - \delta, \theta_{j,1} -\epsilon +\delta] \times \{1\} \subset S^1 \times [-1,1]$ and $[-\delta,\delta] \times \{2\}$ to $[\theta_{j,2} -\epsilon - \delta, \theta_{j,2} -\epsilon +\delta] \times \{1\} \subset S^1 \times [-1,1]$ for the first $[-\delta, \delta] \times [1,2]$, and 
	\item $[-\delta,\delta] \times \{1\}$ to $[\theta_{j,1} +\epsilon - \delta, \theta_{j,1} +\epsilon +\delta] \times \{-1\} \subset S^1 \times [-1,1]$ and $[-\delta,\delta] \times \{2\}$ to $[\theta_{j,2} +\epsilon - \delta, \theta_{j,2} +\epsilon +\delta] \times \{-1\} \subset S^1 \times [-1,1]$ for the second $[-\delta, \delta] \times [1,2]$.
\end{itemize}
We note that when one attaches two $[-\delta, \delta] \times [1,2]$ to $S^1 \times [-1, 1]$, the resulting space should be orientable. 

\begin{remark}
	\mbox{}
	\begin{enumerate}
		\item We would like to point out that the usage of the letter $\epsilon$ in this subsection is different of that in Sections \ref{section the proof of Theorem main}, \ref{subsect an example of the main theorem}, and \ref{subsect examples}. 
		In the previous sections, $D^*\left(S^{n-k-1} \times \mathbb{D}^k\right)$ is attached by using $(3\epsilon)$-disk-neighborhood of the attaching sphere. 
		Especially, if one compares the notations in Section \ref{subsection Legendrian isotopy} and the present subsection, one can observe that $[\theta_{j,1} -\epsilon - \delta, \theta_{j,1} -\epsilon +\delta]$ and the other intervals in the above paragraph correspond to $[\epsilon, 2\epsilon]$ part of Section \ref{subsection Legendrian isotopy}, see Figure \ref{figure graph}. 
		Thus, one can observe that $\epsilon$ (resp.\ $\delta$) in the present subsection corresponds to $\tfrac{3}{2}\epsilon$ (resp.\ $\tfrac{1}{2}\epsilon$) in the previous sections. 
		\item The reason we use two different letters $\epsilon$ and $\delta$ in the present subsection is that we would like to control the size of $\epsilon$ independently of the size of $\delta$. 
		By doing that, we can observe that our inductive method gives the same answer with \cite{Johns}. 
	\end{enumerate}
\end{remark}

See Figure \ref{figure 2dim_exam}. 
Figure \ref{figure 2dim_exam} a) corresponds to the handle decomposition of torus, which we considered in Section \ref{subsect an example of the main theorem} and Figure \ref{figure F_2}, and b) corresponds to the $\mathbb{RP}^2$ example that we described in Section \ref{subsect examples} and Figure \ref{figure RP^2}.
\begin{figure}[h]
	\centering
	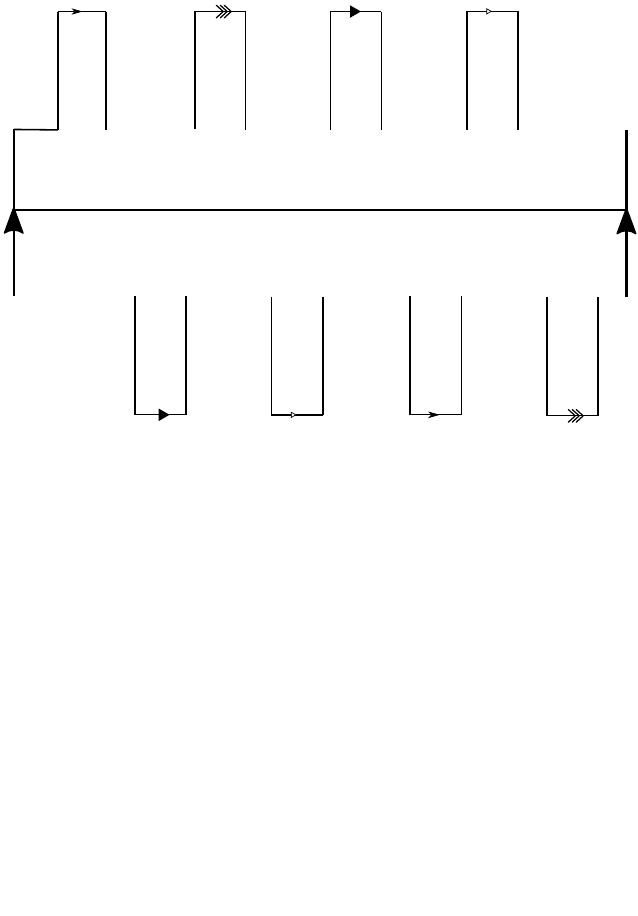
	\caption{a) is the resulting fiber when the input handle decomposition is the one described in Figure \ref{figure decomposition of a torus}. We specify $[\theta_{1,1} -\epsilon - \delta, \theta_{1,1} -\epsilon +\delta] \times \{1\}$ part as the red part. b) is the resulting fiber when the input handle decomposition is the one of $\mathbb{RP}^2$, described in Section \ref{subsect examples}.}
	\label{figure 2dim_exam}
\end{figure}

\noindent {\em Vanishing cycles}:
In order to describe the vanishing cycles, let us note that in the above construction of the fiber, if one takes smaller $\epsilon$, the Weinstein isotopy class of the resulting fiber does not change. 
Moreover, even when one choose $\epsilon = 0$, the resulting fiber can be understood as a plumbing space. 
More precisely, the resulting fiber can be obtained by plumbing one $D^*S^1$ for each $j =1, \dots, N$ at two points $\theta_{j,1}$ and $\theta_{j,2}$. 

Then, for each $j =1, \dots, N$, there exists the zero section of the plumbed $D^*S^1$, and it becomes the vanishing cycle corresponding to $\partial h_j^2$ for $j =1, \dots, N$. 
And, we have the zero section of $F_0 \simeq D^*S^1$.
The zero section becomes the vanishing cycle corresponding to $\partial h_0^2$. 
For convenience, let $L_j$ denote the vanishing cycles corresponding to $\partial h_j^2$.

Finally, we need to find the vanishing cycles corresponding to $\partial h_j$ with $j >N$. 
From the inductive construction, one can easily observe that that part corresponds to the Lagrangian surgery of $L_0$ and $\cup_{j=1}^N L_j$ at every plumbing points $\theta_{j,a}$. 
We note that there are two possible ways of Lagrangian surgeries, and we always take the surgery ``bending $L_0$ to left". 
See Figures \ref{figure F_2} and \ref{figure RP^2}, the vanishing cycles are in green (Figure \ref{figure F_2}) and in red (Figure \ref{figure RP^2}).

\noindent{\em Comparison to the result of \cite{Johns}}: We do not explain the algorithm given in \cite{Johns}, but we note that Johns gave an algorithmic construction of Lefschetz fibrations of cotangent bundles of surfaces. 
See \cite[Section 4]{Johns}. 
The idea of \cite{Johns} is to ``complexifying a Morse function of the zero section". 

It is easy to check that the algorithms given in the present subsection and \cite{Johns} give the same fiber and vanishing cycles. 
We expect that it happens since ``complexifying a Morse function" is equivalent to a canceling pair for each subcritical point of the Morse function, but we do not try to prove in the present paper.

\section{The effects of handle moves}
\label{section the effects of handle moves}
Theorem \ref{thm main} gives infinitely many Lefschetz fibrations on a cotangent bundle $T^*M$. 
In Section \ref{section the effects of handle moves}, we discuss how those Lefschetz fibrations of $T^*M$ are related to each other for the case of $\mathrm{dim}M=2$. 
As the result, we show that all Lefschetz fibrations of $T^*M$ constructed by Theorem \ref{thm main} are connected by four moves which are introduced in Section \ref{subsect four moves}.

\subsection{Four moves}
\label{subsect four moves}
Let $(F;L_1, \dots, L_m)$ be an abstract Lefschetz fibration. 
Then, it is well-known that the total space of $(F ; L_1, \dots, L_m)$ is equivalent to the total space of another abstract Lefschetz fibration obtained by applying one of the following four operations:
\begin{itemize}
	\item {\em Deformation} means a simultaneous Weinstein deformation of $F$ and exact Lagrangian isotopy of $(L_1, \dots, L_m)$. 
	\item {\em Cyclic permutation} is to replace the ordered collection $(L_1, \dots, L_m)$ with $(L_2, \dots, L_m, L_1)$.
	In other words, 
	\[(F; L_1, \dots, L_m) \simeq (F;L_2,\dots, L_m, L_1). \] 
	The equivalence means that their total spaces are equivalent.
	\item {\em Hurwitz moves.} Let $\tau_i$ denote the symplectic Dehn twist around $L_i$. {\em Hurwitz move} is to replace $(L_1, \dots, L_m)$ with either $(L_2, \tau_2(L_1), L_3, \dots, L_n)$ or $(\tau_1^{-1}(L_2), L_1, L_3, \dots, L_m)$, i.e.,
	\[(F; L_1, \dots, L_m) \simeq (F;L_2, \tau_2(L_1), \dots, L_m) \simeq (F; \tau_1^{-1}(L_2), L_1, \dots, L_m). \]
	\item {\em Stabilization.} Let $\operatorname{dim}F = 2n-2$, or equivalently, the total space is of dimension $2n$. For a parameterized Lagrangian disk $D^{n-1} \hookrightarrow F$ with Legendrian boundary $S^{n-2} = \partial D^{n-1} \hookrightarrow \partial F$ such that $0 = [\lambda] \in H^1(D^{n-1},\partial D^{n-1})$ where $\lambda$ is the Liouville form, replace $F$ with $\tilde{F}$, obtained by attaching a $(2n-2)$-dimensional Weinstein $(n-1)$-handle to $F$ along $\partial D^{n-1}$, and replace $(L_1, \dots, L_m)$ with $(\tilde{L}, L_1, \dots, L_m)$, where $\tilde{L} \subset \tilde{F}$ is obtained by gluing together $D^{n-1}$ and the core of the handle.
	In other words,
	\[(F;L_1, \dots, L_m) \simeq (\tilde{F}; \tilde{L}, L_1, \dots, L_m). \]
\end{itemize}  
See \cite[Section 1.2]{GP} for more details. 

\begin{remark}
	\label{rmk oepn quetsion}
	As cited in \cite{GP}, it is natural to ask whether any two Lefschetz fibrations of a fixed Weinstein manifold can be connected by a finite sequence of the above four moves. 
	In the current paper, we do not claim that the four moves are enough to connect every Lefschetz fibrations of $T^*M$, but we claim that they are enough to connect all Lefschetz fibrations obtained by applying Theorem \ref{thm main}, when $\operatorname{dim}M=2$.  
\end{remark}

\subsection{Equivalence of Lefschetz fibrations}
\label{subsect equivalence of Lefschetz fibrations}
We start Section \ref{subsect equivalence of Lefschetz fibrations} with a remark. 
\begin{remark}
	\label{rmk one to one}
	\mbox{}
	\begin{enumerate}
		\item Let $D$ be a handle decomposition of $M$ and let $\pi$ be the Lefschetz fibration of $T^*M$ produced from $D$. Then, there are one-to-one relations between the following three sets:
		\begin{itemize}
			\item the set of handles in $D$, 
			\item the set of critical handles in $W_{\tilde{D}}$, and
			\item the set of vanishing cycles of $\pi$. 
		\end{itemize}
		See also Remark \ref{rmk abstract Lefschtz fibration}.
		This fact will be heavily used in the rest of Section \ref{section the effects of handle moves}.
		\item We also want to remark that since we would like to use the description of the resulting fiber given in Section \ref{subsection the case of cotangetn bundles of surfaces}, we are assuming that every $1$-handle in a given handle decomposition $D$ is attached to the unique zero handle. 
		See the second bulleted item in Section \ref{subsection the case of cotangetn bundles of surfaces}.
	\end{enumerate}
\end{remark}

In Sections \ref{subsect equivalence of Lefschetz fibrations} --\ref{subsect sliding}, we prove the following Proposition. 
\begin{proposition}
	\label{prop handle moves}
	If $M$ is a $2$-dimensional manifold, then all Lefschetz fibration of $T^*M$ obtained by applying Theorem \ref{thm main} are connected to each other by a finite sequence of the four moves in Section \ref{subsect four moves}.
\end{proposition}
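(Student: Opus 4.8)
The plan is to reduce the statement to the classical theory of handle moves on the smooth surface $M$, and then to translate each elementary handle move into a finite composition of the four moves of Section \ref{subsect four moves}. It is a classical fact of Cerf theory that any two handle decompositions of a fixed smooth manifold are connected by a finite sequence of elementary moves of four kinds: isotopies of the attaching maps, reorderings of the handles compatible with the index filtration, handle slides, and creation or cancellation of a canceling pair of handles of consecutive index. Because the algorithm $\mathcal{A}$ of Theorem \ref{thm main} is built directly and naturally from the handles of $\tilde{D}$ and their attaching Legendrians---the central fiber $F$ being assembled from the subcritical pieces $\check{H}_i$, and each vanishing cycle $L_i$ being the projection $pr(A_i)$ of the attaching Legendrian of a critical handle---it suffices to check that each of these four elementary moves on $D$ changes $\pi_D$ by a finite composition of the four Lefschetz moves. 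Since $\dim M = 2$ we have $n = 2$, so $F$ is a $2$-dimensional Weinstein domain and every vanishing cycle is an embedded curve in $F$; this makes all of the relevant data completely explicit.

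First I would dispose of the three easier correspondences. An isotopy of an attaching map in $D$ moves the associated attaching Legendrian, and hence the modified Legendrian produced in Section \ref{section modification of Legendrians}, by a Legendrian isotopy; by Lemma \ref{lemma Legendrian isotopy} this changes the vanishing cycle only by an exact Lagrangian isotopy inside $F$, which is precisely a \emph{Deformation}. A reordering of the critical handles changes only the cyclic order in which the vanishing cycles are attached around $\partial \mathbb{D}^2$, and after a global rotation of the base this is realized by \emph{cyclic permutations} (together with Hurwitz moves in the case where two non-disjoint cycles must be exchanged), while reorderings among subcritical handles only reshuffle the assembly of $F$ and are absorbed into a Deformation. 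Finally, the creation of a canceling $(n-1,n)$-pair of handles is exactly the operation that Step $1$ of the algorithm already performs: the new critical handle is attached along the boundary of a Lagrangian disk $D^{n-1} \subset F$, and tracing this through \cite[Proposition 8.1]{BEE} produces a new vanishing cycle $\tilde{L}$ bounding that disk, together with the $(n-1)$-handle attached to $F$ along $\partial D^{n-1}$, which is the definition of a \emph{Stabilization}.

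The main content, and the point I expect to be the principal obstacle, is the correspondence between a smooth \emph{handle slide} and a \emph{Hurwitz move}. Here I would argue directly in the fiber $F$. Sliding one critical handle $h_j$ over another critical handle $h_k$ replaces the attaching sphere of $h_j$ by its band-sum with a pushoff of the attaching sphere of $h_k$ along a connecting arc. Pushing this modification through the Legendrian-modification procedure of Section \ref{section modification of Legendrians}, the connecting arc lifts to the Lagrangian thimble of $L_k$, whose boundary monodromy is the symplectic Dehn twist $\tau_k$ about $L_k$; consequently the vanishing cycle $L_j$ is replaced by $\tau_k(L_j)$ (or $\tau_k^{-1}(L_j)$, according to the sign of the slide). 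After arranging, by the negative Reeb-flow reorderings already used in the proof of Theorem \ref{thm main}, that $h_j$ and $h_k$ are attached along adjacent intervals of $\partial \mathbb{D}^2$, the comparison with the defining relation $(F; \dots, L_j, L_k, \dots) \simeq (F; \dots, L_k, \tau_k(L_j), \dots)$ exhibits this as exactly a Hurwitz move. The delicate step is to verify that the band-summed attaching circle becomes \emph{precisely} the Dehn-twist image $\tau_k(L_j)$, rather than merely a curve isotopic or abstractly equivalent to it; I would establish this through the explicit two-dimensional model computation, using that in $\dim M = 2$ the slide is supported in a neighborhood of the connecting arc and the induced symplectomorphism of $F$ is compactly supported near $L_k$. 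Slides that involve subcritical handles reduce to this case after accounting for the division in Step $1$, since the index-$n$ piece $h_i^n$ of each divided handle carries the only vanishing cycle affected.

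Combining the four correspondences, any finite sequence of handle moves connecting handle decompositions $D$ and $D'$ of $M$ lifts to a finite sequence of Deformations, cyclic permutations, Hurwitz moves, and Stabilizations connecting $\pi_D$ and $\pi_{D'}$. Since by Theorem \ref{thm main} every Lefschetz fibration under consideration is of the form $\pi_D$ for some $D \in \mathcal{H}(M)$, this proves the proposition.
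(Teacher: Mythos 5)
Your overall skeleton---reduce to elementary handle moves on $M$ and translate each one into the four Lefschetz moves---is exactly the paper's strategy, and your treatment of attaching-map isotopies and reorderings is fine (for reorderings the paper shows more: the permitted order changes leave the abstract Lefschetz fibration literally unchanged, because the relevant vanishing cycles are disjoint). The gaps are in the two substantive cases. First, cancellation is not a single Stabilization. If $D_2$ is $D_1$ together with a canceling pair $(h^1,h^2)$, then Step~1 of the algorithm divides the subcritical handle $h^1$ into two $1$-handles and a $2$-handle, so $\tilde{D}_2$ acquires \emph{two} extra critical handles ($h^2$ and the $2$-handle coming from the division of $h^1$): the fibration $\pi_{D_2}$ has two more vanishing cycles than $\pi_{D_1}$, and its fiber has two more $1$-handles. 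Accordingly, the paper's proof of Lemma \ref{lemma handle canceling} uses two Hurwitz moves interleaved with two (de)stabilizations (Figure \ref{figure cancel}); the Hurwitz moves are needed precisely to bring the cycles into a position where the condition $0=[\lambda]\in H^1(D,\partial D)$ required for destabilization can be checked. Your single-stabilization claim undercounts the cycles and omits this positioning step.

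Second, and more seriously, your ``main content'' paragraph analyzes slides of \emph{critical} handles, but those are not the slides that occur when $\operatorname{dim}M=2$: critical handles are disjoint by Definition \ref{def handle decomposition}, and each $2$-handle caps off a boundary circle of the subcritical part, so the only nontrivial slides are of a $1$-handle over a $1$-handle (with or without twisting), which is exactly the content of Lemma \ref{lemma sliding}. Your one-sentence reduction of this case---that subcritical slides reduce to critical ones because ``the index-$n$ piece $h_i^n$ carries the only vanishing cycle affected''---is false: a subcritical slide changes the attaching data of the pieces $h_i^{ori}$ and $h_i^{n-1}$ as well, i.e.\ it changes how the fiber $F$ is assembled from the handles $\check{H}_i^{ori}, \check{H}_i^{n-1}$, not merely one vanishing cycle. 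No sequence of Hurwitz moves, cyclic permutations and deformations can account for that, since none of these moves changes the Weinstein handle structure of $F$; this is why the paper's proof of Lemma \ref{lemma sliding} must pass through larger fibers via a chain of roughly ten stabilizations, destabilizations, deformations and Hurwitz moves (Figures \ref{figure sliding1} and \ref{figure sliding2}) rather than a single Hurwitz move. As written, your argument verifies the translation only for moves that do not arise, and leaves unproved the two moves that do.
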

\begin{proof}
	It is well-known that any two handle decomposition $D_1$ and $D_2$ of the same manifold $M$ are connected by a finite sequence of three operations, {\em a change of order of handles, a cancellation of a canceling pair} and {\em a handle sliding}.
	Because $\operatorname{dim}M=2$, and because every handle decomposition has only one $0$-handle by Definition \ref{def handle decomposition}, it is enough to consider the following four cases:
	\begin{itemize}
		\item The first case is to change orders of handles. 
		\item The second case is to cancel a canceling pair consisting of a $1$-handle and a $2$-handle.
		\item The third (resp.\ the last) case is to slide a $1$-handle along another $1$-handle without twisting (resp.\ with twisting). 
	\end{itemize}
	The last three cases are described in Figure \ref{figure handle moves}.
	
	\begin{figure}[h]
		\centering
		\input{handle_moves.txt}		
		\caption{The super script means the index of each handle. Note that the figures do not contain the whole $1$-handle $h^1_2$ in b). and c).
			a). the operation is the cancellation of a canceling pair consisting of $h^1$ and $h^2$. b). A $1$-handle $h^1_2$ is sliding along $h^1_1$, a $1$-handle without twisting. c). A $1$-handle $h^1_2$ is sliding along $h^1_1$, a $1$-handle with twisting.}
		\label{figure handle moves}
	\end{figure}
	
	In order to discuss the first case, let $D_1 :=\{h_0, \dots, h_m\}$ be a handle decomposition of $M$.
	If $D_2$ is another handle decomposition of $M$ obtained by switching the order of $h_i$ and $h_j$, one can observe that $h_i$ and $h_j$ have the same index.
	
	Let $\pi_i$ denote the Lefschetz fibration obtained by applying Theorem \ref{thm main} to $D_i$. 
	Also, we assume that $i < j $ without loss of generality.   
	If $h_i$ and $h_j$ are $2$-handles, one can easily check that $\pi_1$ and $\pi_2$ are connected by multiple Hurwitz moves. 
	
	To be more precisely, let $\pi_1$ be the following abstract Lefschetz fibration
	\[\pi_1 = (F: L_m, \dots, L_i, \dots, L_j, \dots, L_0).\]
	Similarly, $\pi_2$ can be written as the following abstract Lefschetz fibration
	\[\pi_2 = (F: L_m, \dots, L_j, \dots, L_i, \dots, L_0).\]
	Moreover, 
	\[L_i \cap L_k =\varnothing, L_j \cap L_k = \varnothing \text{  for all  } k = i+1, \dots, j-1,\]
	since 
	\begin{gather}
		\label{eqn change order}
		\partial h_i \cap \partial h_k =\varnothing, \partial h_j \cap \partial h_k = \varnothing \text{  for all  } k = i+1, \dots, j-1.
	\end{gather}
	We note that for all $k = i, \dots, j$, $h_k$ should be of index $2$. 
	Thus, Equation \eqref{eqn change order} holds. 
	Now, since $L_i \cap L_k = \varnothing$ (resp.\ $L_j \cap L_k = \varnothing$), $\tau_k(L_i) = L_i$ (resp.\ $\tau_k^{-1}(L_j)=L_j$), where $\tau_k$ is a Dehn twist around $L_k$. 
	Thus, by operating Hurwitz moves, one can change the order of $L_i$ (resp.\ $h_j$) and $L_k$ for all $k = i+1, \dots, j-1$, without changing the vanishing cycles. 
	
	If $h_i$ and $h_j$ are $1$-handles, their attaching regions are disjoint subsets of $\partial h_0$.
	See Remark \ref{rmk one to one} (2).
	Thus, from the construction of $\pi_1$, one could observe that the vanishing cycle corresponding to $\partial_L H_i^2 = \partial h_i$, i.e., $L_i$, does not intersect with vanishing cycles corresponding to $\partial_L H_k^2 = \partial h_k^2$, i.e., $L_k$, for all $k = i+1, \dots, j$.  
	Similarly, the vanishing cycles corresponding to $\partial_L H_j^2 = \partial h_j^2$, i.e., $L_j$, does not intersect with vanishing cycles corresponding to $\partial_LH_k^2 = \partial h_k^2$, i.e., $L_k$, for $k = i, \dots, j-1$.
	Then, by the same logic, one can check that $\pi_1$ and $\pi_2$ are connected by a sequence of Hurwitz moves such that the sequence changes the order of vanishing cycles without changing the vanishing cycles. 
	
	Now, it is enough to consider the three cases described in Figure \ref{figure handle moves}.   
	Lemma \ref{lemma handle canceling} cares the handle cancellation, and Lemma \ref{lemma sliding} cares the handle sliding.
	Thus, Lemmas \ref{lemma handle canceling}--\ref{lemma sliding} complete the proof.  
\end{proof}

\begin{lemma}
	\label{lemma handle canceling}
	If a handle decomposition $D_2$ is obtained from $D_1$ by a cancellation of a canceling pair, then $\pi_1$ and $\pi_2$ are connected to each other by four moves.
\end{lemma}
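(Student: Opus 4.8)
The plan is to recognize the Lefschetz fibration $\pi_1$ as obtained from $\pi_2$ by exactly two stabilizations (equivalently, to destabilize $\pi_1$ twice) and to absorb the remaining discrepancies into deformations, cyclic permutations, and Hurwitz moves. Write $D_1 = \{h_0, \dots, h_m\}$ with a canceling pair consisting of a $1$-handle $h_a$ and a $2$-handle $h_b$, and $D_2 = D_1 \setminus \{h_a, h_b\}$. Recall from Theorem \ref{thm main} and its proof that the regular fiber $F_1$ of $\pi_1$ is assembled from the $(2n-2)$-dimensional handles $\check{H}$ attached to the subcritical Weinstein handles of the split decomposition $\tilde{D}_1$, while the vanishing cycles are indexed by the critical handles. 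In particular the subcritical handle $h_a$ contributes to $F_1$ the two index-$1$ fiber handles $\check{H}_a^{ori}$ and $\check{H}_a^{1}$ together with the single vanishing cycle $L_a^{2}$ (coming from the added $2$-handle $h_a^{2}$), and $h_b$ contributes the vanishing cycle $L_b$. By contrast $\pi_2$ has a fiber obtained from $F_1$ by deleting $\check{H}_a^{ori}$ and $\check{H}_a^{1}$, and omits the two cycles $L_a^{2}$ and $L_b$. Thus it suffices to peel off these two fiber handles and two vanishing cycles by two destabilizations.

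First I would perform the destabilization attached to the \emph{internal} canceling pair $(h_a^{1}, h_a^{2})$ created by Step 1 of the algorithm $\mathcal{A}$. Because this pair is introduced locally inside $h_a$ (conditions (i)--(ii) of Step 1), the vanishing cycle $L_a^{2}$ meets the co-core of the fiber handle $\check{H}_a^{1}$ transversally in a single point, and no other vanishing cycle runs over $\check{H}_a^{1}$. Hence, after a cyclic permutation putting $L_a^{2}$ in the appropriate position and a deformation normalizing it, $L_a^{2}$ is the union of the core of $\check{H}_a^{1}$ with a Lagrangian arc in the complementary fiber; this is exactly the stabilization position of Section \ref{subsect four moves}, so $\pi_1$ is the stabilization of the fibration $\pi_1'$ obtained by deleting $\check{H}_a^{1}$ and $L_a^{2}$.

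Next I would treat the remaining fiber handle $\check{H}_a^{ori}$, which records the original belt sphere of $h_a$. Its co-core is crossed once by $L_b$, reflecting that the attaching circle of $h_b$ meets the belt sphere of $h_a$ transversally in one point (the defining property of a canceling pair). Any other vanishing cycle crossing $\check{H}_a^{ori}$ comes from a $2$-handle of $D_1$ that also runs over $h_a$; in the smooth cancellation of $(h_a, h_b)$ these handles are rerouted by handle slides over $h_b$, and I would realize each such slide by a Hurwitz move, i.e.\ by replacing the offending cycle with its image under the Dehn twist $\tau_{L_b}^{\pm 1}$. After finitely many such Hurwitz moves, together with cyclic permutations to reorder the cycles, $L_b$ becomes the unique cycle running over $\check{H}_a^{ori}$, again exactly once, so $(\check{H}_a^{ori}, L_b)$ is in stabilization position and a second destabilization removes both. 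The twice-destabilized fibration then has fiber equal to that of $\pi_2$ and exactly the vanishing cycles of $\pi_2$ (after the same rerouting that the smooth cancellation performs), so it agrees with $\pi_2$ up to deformation.

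The main obstacle is the geometric bookkeeping in the second step: verifying that the single-transverse-intersection condition for the smooth canceling pair $(h_a, h_b)$ translates, through the Legendrian modifications of Section \ref{section modification of Legendrians}, into $L_b$ crossing the co-core of $\check{H}_a^{ori}$ exactly once, and that the handle slides forced on the remaining $2$-handles correspond precisely to Dehn twists along $L_b$. Establishing this dictionary between smooth handle slides and Hurwitz moves, and checking that no spurious intersections obstruct the stabilization position, is where the real work lies; the first destabilization, being built into the algorithm, should be comparatively routine.
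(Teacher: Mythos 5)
Your overall architecture --- exhibiting $\pi_1$ as a double stabilization of $\pi_2$, with Hurwitz moves, cyclic permutations and deformations used to put each cycle into stabilization position --- is exactly the architecture of the paper's proof: the proof of Lemma \ref{lemma handle canceling} is a chain of moves on explicit local pictures (Figure \ref{figure cancel}) of the form Hurwitz move, destabilization, Hurwitz move, destabilization. The order in which you remove the two extra cycles is the reverse of the paper's, which is immaterial.

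However, your first step rests on a false claim, and as stated that destabilization would fail. You assert that, because the pair $(h_a^{1},h_a^{2})$ is created ``internally'' by Step 1 of the algorithm, no vanishing cycle other than $L_a^{2}$ runs over $\check{H}_a^{1}$. This is not so. The division of the $1$-handle $h_a$ places one of the two free-boundary arcs of $h_a$ in $h_a^{ori}$ and the other in $h_a^{1}$, and in a handle decomposition of a closed surface each of these arcs is traversed by the attaching circle of exactly one $2$-handle of $D_1$: one of them by $h_b$ (this is precisely the canceling condition), the other by the adjacent $2$-handle (the blue curve in Figure \ref{figure cancel}). Hence each of the two fiber handles $\check{H}_a^{ori}$, $\check{H}_a^{1}$ carries, besides $L_a^{2}$, exactly one further vanishing cycle, namely either $L_b$ or the adjacent one. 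Whichever labeling you choose, one of these blocks your first destabilization; worse, your two steps are mutually inconsistent, since if $L_b$ crosses $\check{H}_a^{ori}$ (as your second step assumes) then the adjacent cycle crosses $\check{H}_a^{1}$ and obstructs step 1, while if $L_b$ crosses $\check{H}_a^{1}$ then $L_b$ itself obstructs step 1 and the premise of step 2 fails. The repair is exactly the paper's move a).$\Rightarrow$b).: before \emph{each} destabilization, perform a Hurwitz move replacing the single offending cycle by its image under $\tau^{\pm 1}$ of the cycle about to be destabilized, which slides it off the fiber handle (possible because that cycle crosses the relevant co-core exactly once). With this insertion, and with the verification you defer --- carried out in the paper via Remark \ref{rmk two facts}, using the standard Liouville structure near the zero section to check the condition $0=[\lambda]\in H^1(D,\partial D)$ for the stabilizing disks and to pin down the sign of each Dehn twist --- your argument coincides with the paper's proof.
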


\begin{lemma}
	\label{lemma sliding}
	If a handle decomposition $D_2$ is obtained from $D_1$ by sliding an $1$-handle along another $1$-handle (with or without twisting), then $\pi_1$ and $\pi_2$ are connected to each other by four moves.
\end{lemma}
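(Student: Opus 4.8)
The plan is to localize the handle slide and then read off its effect on the ordered collection of vanishing cycles directly, identifying that effect with a short composition of the four moves of Section \ref{subsect four moves}, principally a Hurwitz move. First I would set up the local model. A slide of $h^1_2$ along $h^1_1$ is supported in a neighborhood $U \subset M$ of $h^1_1 \cup h^1_2$ together with the arc along which a foot of $h^1_2$ is dragged; away from $U$ the decompositions $D_1$ and $D_2$ agree verbatim. Since the algorithm of Theorem \ref{thm main} assigns to each handle of $\tilde{D}$ a vanishing cycle built from its attaching data, the cycles coming from handles disjoint from $U$ are untouched, so only the cycles associated to the divided pieces $h_1^{ori},h_1^1,h_1^2$ and $h_2^{ori},h_2^1,h_2^2$ can change. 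I would therefore reduce to the surface obtained by attaching $h^1_1,h^1_2$ to the $0$-handle, compute the fiber $F$ and the affected vanishing cycles there, and re-insert into the global picture using Deformation (to isotope everything back to standard position) together with the reordering already justified in the proof of Proposition \ref{prop handle moves} and a Cyclic permutation, so as to make the two affected critical handles $h_1^2,h_2^2$ adjacent in the cyclic order.

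Second, in the local model I would compute explicitly. Recall that $F$ is the annulus $F_0=T^*S^1$ with two bands attached per $1$-handle of $M$ (from $\check H_i^{ori}$ and $\check H_i^1$), and that the vanishing cycle of the canceling $2$-handle $h_i^2$ is, after the Legendrian modification of Section \ref{section modification of Legendrians}, an embedded curve $C_i\subset F$ playing the role of the belt circle of the band $\check H_i^1$. A slide drags a foot of $h^1_2$ across the band $h^1_1$, which in $F$ drags the bands of $h_2$ over those of $h_1$ and replaces $C_2$ by its band sum with $C_1$; on the fiber surface this band sum is isotopic to the Dehn-twisted curve $\tau_{C_1}^{\pm 1}(C_2)$. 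Matching the resulting parametrized Lagrangian sphere with the output of the Hurwitz move $(\dots,C_1,C_2,\dots)\simeq(\dots,\tau_{C_1}^{-1}(C_2),C_1,\dots)$ then exhibits the passage from $\pi_1$ to $\pi_2$ as a Hurwitz move, up to Deformation and Cyclic permutation. The two cases of the lemma correspond to the two ways the band can be reattached: the untwisted slide gives one sign of the twist, while the twisted slide gives the opposite sign together with an orientation reversal of the band, which is absorbed by re-choosing the parametrization of the vanishing sphere (a Deformation). Because the slide changes neither the number of handles of each index nor the diffeomorphism type of $F$, no Stabilization is needed.

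The hard part will be the precise geometric matching in the second step: verifying that the band sum of vanishing curves produced by the slid decomposition is genuinely Legendrian (equivalently, Hamiltonian) isotopic to the Dehn-twisted curve predicted by the Hurwitz move, and that this holds compatibly with the parametrizations of the vanishing spheres and with the framing data carried by the attaching Legendrians of Section \ref{subsect attaching Legendrian}. The twisted case is the more delicate one, since there one must confirm that the orientation reversal of the band does not push the result outside the four-move equivalence class but is instead accounted for purely by a deformation of the parametrization. Keeping careful track of the angles $\theta_0,\theta_1$ and of the negative Reeb-flow isotopies used in Section \ref{section modification of Legendrians} while performing the slide is where the bookkeeping is heaviest, and it is the step I expect to require the most care.
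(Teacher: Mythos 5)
Your proposal has genuine gaps, and they start at the localization step. In the algorithm of Theorem \ref{thm main}, the subcritical pieces $h_i^{ori}$ and $h_i^{1}$ carry no vanishing cycles at all --- they are the handles out of which the fiber $F$ is built --- so your list of ``cycles that can change'' really reduces to the two cycles $C_1, C_2$ of the canceling $2$-handles $h_1^2, h_2^2$. But those are not the cycles that carry the change. The cycles that genuinely change under a slide are the ones attached to the original $2$-handles of $M$ whose attaching circles run over $h^1_1$ or $h^1_2$: the slide rewrites their attaching words, so the corresponding curves run through the bands of $F$ in a different pattern. These handles are not disjoint from your neighborhood $U$, yet they are missing from your list. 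In the paper's proof they are exactly the blue and purple curves, and all four Hurwitz moves there act on \emph{them} (producing $\tau_{o}(\text{blue})$, $\tau_{o}^{-1}(\text{blue})$, $\tau_{o}^{-1}(\text{purple})$ for an auxiliary cycle $o$), not on the pair $(C_1, C_2)$.

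Your central mechanism also fails on its own terms. Since $h^1_1$ and $h^1_2$ are disjoint handles of $D$, their feet on $\partial h^0$ and hence their bands in $F$ are disjoint, so $C_1$ and $C_2$ are \emph{disjoint} curves in the fiber. Consequently $\tau_{C_1}^{\pm 1}(C_2)$ is isotopic to $C_2$, and the Hurwitz move you propose only transposes the order of two unchanged cycles --- it cannot produce any band sum. (The identity ``band sum $\simeq$ Dehn-twist image'' that you invoke requires the two curves to meet transversely in a single point.) Finally, the claim that ``no Stabilization is needed'' is asserted, not proved, and it is contradicted by the structure of the paper's argument: the step-by-step sequence in Figures \ref{figure sliding1} and \ref{figure sliding2} uses one stabilization and three destabilizations (the dashed orange Lagrangians), interleaved with deformations and Hurwitz moves, precisely because the local pictures of $\pi_1$ and $\pi_2$ do not even contain the same number of vanishing cycles and fiber handles near the slide region. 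Deformation, Cyclic permutation and Hurwitz moves all preserve the fiber and the number of cycles, so no combination of those three moves alone can bridge that difference; the auxiliary cycles created and removed by (de)stabilization are what allow the $2$-handle cycles to be re-routed, exactly as in the proof of Lemma \ref{lemma handle canceling}.
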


\begin{remark}
	\label{rmk two facts}
		Before proving Lemmas \ref{lemma handle canceling}--\ref{lemma sliding}, we would like to point out that, according to the algorithm given by Theorem \ref{thm main}, the regular fiber $F$ admits a Weinstein handle decomposition.
		Moreover, by dimension reason, the regular fiber of $\pi_i$ is given by the disc cotangent bundle $D^*S^1$ and Weinstein $1$-handles. 
		This allows us to draw Figures \ref{figure cancel}--\ref{figure sliding2} that play a key role in our proof. 
\end{remark}

\subsection{Proof of Lemma \ref{lemma handle canceling}}
\label{subsect canciling}
The strategy for proving Lemma \ref{lemma handle canceling} is the following:
we start the proof by drawing a local figure of $\pi_1$.
We point out that $\pi_1$ is an abstract Lefschetz fibration, thus, a local figure of $\pi_1$ means a local figure of the fiber $F_1$ together with vanishing cycles. 
Then, we operate a sequence of four moves, and it induces a sequence of Lefschetz fibrations.
At the end, we stop when we have a local figure corresponding to $\pi_2$.
We note that $\pi_i$ is obtained by applying Theorem \ref{thm main} for $D_i$, and $D_1$ (resp.\ $D_2$) is modeled in Figure \ref{figure handle moves}, a) left (resp. right).

\begin{figure}[p]
	\centering
\begingroup%
  \makeatletter%
  \providecommand\color[2][]{%
    \errmessage{(Inkscape) Color is used for the text in Inkscape, but the package 'color.sty' is not loaded}%
    \renewcommand\color[2][]{}%
  }%
  \providecommand\transparent[1]{%
    \errmessage{(Inkscape) Transparency is used (non-zero) for the text in Inkscape, but the package 'transparent.sty' is not loaded}%
    \renewcommand\transparent[1]{}%
  }%
  \providecommand\rotatebox[2]{#2}%
  \newcommand*\fsize{\dimexpr\f@size pt\relax}%
  \newcommand*\lineheight[1]{\fontsize{\fsize}{#1\fsize}\selectfont}%
  \ifx\svgwidth\undefined%
    \setlength{\unitlength}{226.77165354bp}%
    \ifx\svgscale\undefined%
      \relax%
    \else%
      \setlength{\unitlength}{\unitlength * \real{\svgscale}}%
    \fi%
  \else%
    \setlength{\unitlength}{\svgwidth}%
  \fi%
  \global\let\svgwidth\undefined%
  \global\let\svgscale\undefined%
  \makeatother%
  \begin{picture}(1,2.45)%
    \lineheight{1}%
    \setlength\tabcolsep{0pt}%
    \put(0,0){\includegraphics[width=\unitlength,page=1]{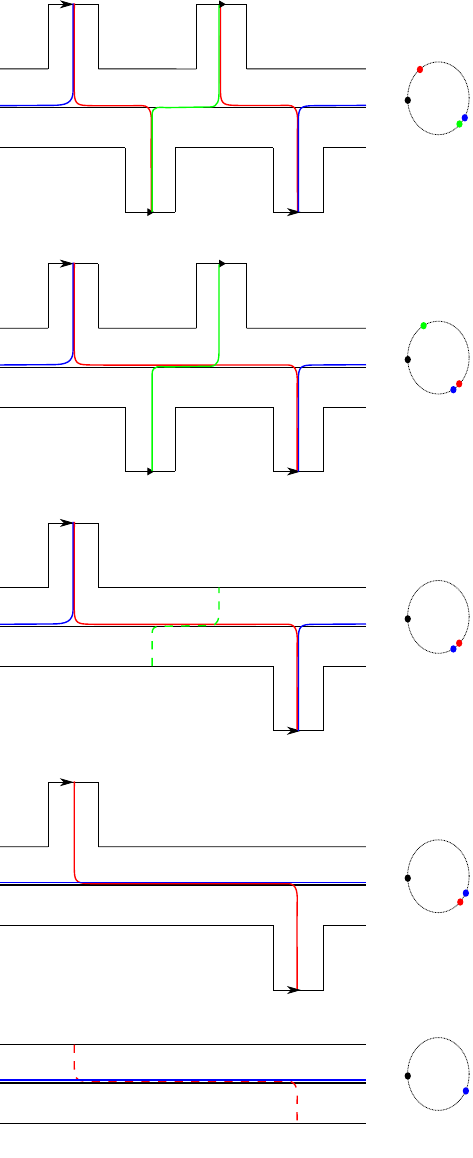}}%
    \put(0.38792658,1.95145353){\makebox(0,0)[lt]{\lineheight{1.25}\smash{\begin{tabular}[t]{l}a).\end{tabular}}}}%
    \put(0.3879286,1.40258889){\makebox(0,0)[lt]{\lineheight{1.25}\smash{\begin{tabular}[t]{l}b).\end{tabular}}}}%
    \put(0.3879286,0.85372441){\makebox(0,0)[lt]{\lineheight{1.25}\smash{\begin{tabular}[t]{l}c).\end{tabular}}}}%
    \put(0.41364189,0.29214902){\makebox(0,0)[lt]{\lineheight{1.25}\smash{\begin{tabular}[t]{l}d).\end{tabular}}}}%
    \put(0.41364439,0.00170999){\makebox(0,0)[lt]{\lineheight{1.25}\smash{\begin{tabular}[t]{l}e).\end{tabular}}}}%
  \end{picture}%
\endgroup%
		
	\caption{By a sequence of four moves, one can convert a) to e). For each of a) -- e), the lefts are local pictures of fibers together with vanishing cycles (colored curves), and the right circles indicate the cyclic order of vanishing cycles.}
	\label{figure cancel}
\end{figure}

Figure \ref{figure cancel}, a) is the local picture for $\pi_1$. 
In the local picture, there are four vanishing cycles that correspond to handles in Figure \ref{figure handle moves}, a) left.
The correspondence are given as follows:
\begin{itemize}
	\item The black curve corresponds to the $0$-handle $h^0$.
	\item The red curve corresponds to the $1$-handle $h^1$. 
	\item The green curve corresponds to the $2$-handle $h^2$.
	\item The blue curve corresponds to the $2$-handle which is adjacent to $h^1$, and which is not $h^2$. 
\end{itemize}
One can also observe that in the cyclic order, the green and blue come the first since they correspond to $2$-handles, the red is the next since it correspond to the $1$-handle, and the black comes the last since it corresponds to the $0$-handle. 
See Remark \ref{rmk abstract Lefschtz fibration}.
We note that the order between blue and green vanishing cycles are not important because they do not intersect each other. 

Figure \ref{figure cancel}, b) is obtained from a) by doing Hurwitz move which applies an inverse Dehn twist around the green to the red. 
We note that the Liouville structure near the black is same as the standard Liouville structure of the cotangent bundle of the black curve, as explained in Remark \ref{rmk two facts}.

On the other hand, Figure \ref{figure cancel}, b) is obtained by stabilizing c) along the green dashed curve in c). 
In order to justifying the stabilization operation, we should check that the integration of the Liouville form on the whole green dashed line is zero.
This corresponds to the condition $0= [\lambda] \in H^1(D,\partial D)$ in the definition of the stabilization. 
One can easily check this since along the green dashed curve, one can assume that the Liouville form is the standard Liouville form on the cotangent bundle of the black.

Figure \ref{figure cancel}, d) is obtained by a Hurwitz move for the red and blue curves.
This is similar to the step between a) and b).
Meanwhile, Figure \ref{figure cancel}, d) can be obtained from e), by operating a stabilization along the red dashed curve.
In order to justify the stabilization procedure, we need the same computation which we did for the step between b) and c). 

Since the local picture corresponding to the handle decomposition $D_2$ is Figure \ref{figure cancel}, e) this completes the proof of Lemma \ref{lemma handle canceling}.
\qed 

\subsection{Proof of Lemma \ref{lemma sliding}}
\label{subsect sliding}

Because of the lengthy of the paper, we prove Lemma \ref{lemma sliding} only for the case of $1$-handle sliding without twisting.
The other case could be proven by the same way. 

\begin{figure}[p]
	\centering
	\input{sliding1.txt}		
	\caption{a). It is the same as Figure \ref{figure handle moves}, b). 
		For b) -- f), the lefts are local pictures of fibers together with vanishing cycles and the right circles indicate the cyclic order of vanishing cycles. We note that the vanishing cycle corresponding to $H_0^2$ is denoted by a black dot in the right circle, but it is omitted in the fiber pictures.}
	\label{figure sliding1}
\end{figure}

\begin{figure}[p]
	\centering
	\input{sliding2.txt}		
	\caption{For each of g) -- l), the lefts are local pictures of fibers together with vanishing cycles (colored curves) and the right circles indicate the cyclic order of vanishing cycles.}
	\label{figure sliding2}
\end{figure}

We prove Lemma \ref{lemma sliding} as similar to the proof of Lemma \ref{lemma handle canceling}.
More precisely, we start from a local picture of $\pi_2$, where $D_2$ is described in Figure \ref{figure handle moves}, b) right. 
We operate a sequence of four moves for the local picture until we get a local picture corresponding to $\pi_1$. 
We note that Figure \ref{figure sliding1}, a) is the same picture as Figure \ref{figure handle moves}, b) except that it is decorated by colored curves. 
The colors explain the one-to-one relation mentioned in Remark \ref{rmk one to one}.

Figures \ref{figure sliding1} and \ref{figure sliding2} give a sequence of four moves.
We omit some details since the omitted details are the same as the proof of Lemma \ref{lemma handle canceling}.

\begin{enumerate}
	\item[b) $\Rightarrow$ c).] We take a stabilization with the dashed orange Lagrangian in b).
	\item[c) $\Rightarrow$ d).] We take a deformation.
	\item[d) $\Rightarrow$ e).] We operate a Hurwitz move changing the order of the orange and the green.
	\item[e) $\Rightarrow$ f).] We operate another Hurwitz move, exchanging the blue and the orange.
	\item[f) $\Leftarrow$ g).] We take a stabilization with the dashed orange Lagrangian in g).
	\item[g) $\Rightarrow$ h).] We take a deformation.
	\item[h) $\Leftarrow$ i).] We operate a stabilization with the dashed orange Lagrangian in i).
	\item[i) $\Rightarrow$ j).] We take a deformation.
	\item[j) $\Rightarrow$ k).] We take two Hurwitz moves, so that the orange goes front of the blue and the purple.
	\item[k) $\Leftarrow$ l).] We take a stabilization with the dashed orange Lagrangian in l).
\end{enumerate}

At the end, we can easily check that Figure \ref{figure sliding2}, l) is the local picture for $\pi_1$ corresponding to the left of Figure \ref{figure sliding1}, a).
This completes the proof.
\qed

We would like to point out that Proposition \ref{prop handle moves} is likely to hold for the case of general-dimensional $M$. 
However, the proof of Proposition \ref{prop handle moves} is based on the ``{\em case by case}'' method.
For higher-dimensional case, a similar proof will work, but there are much more cases. 
\clearpage

\part{Lefschetz fibrations on some plumbings}
\label{part Lefschetz fibrations on some plumbings}
In Part \ref{part Lefschetz fibrations on some plumbings}, we construct Lefschetz fibrations on some plumbings.
In Section \ref{section Lefschetz fibrations on plumbings}, we consider the plumbings of two cotangent bundles, and in Sections \ref{section sketch of the proof of theorem} -- \ref{section proof of plumbing tree theorem}, we consider the plumbings of multiple copies of $T^*S^n$, whose plumbing patterns are trees. 
In Section \ref{section application}, we give possible applications.
Especially, Corollary \ref{cor diffeomorphic family} gives diffeomorphic families of plumbing spaces.
The given diffeomorphic families contain some plumbing spaces with names. 
For example, the Milnor fibers of $A_{2k+1}$-type and $D_{2k+1}$-type are diffeomorphic to each other if their dimension is two times of an odd number.
In \cite{Choa-Karabas-Lee}, it is known that each member of the diffeomorphic families is symplectically different to other members of the same family. 

\section{Plumbing space of two cotangent bundles}
\label{section Lefschetz fibrations on plumbings}
In Section \ref{section Lefschetz fibrations on plumbings}, we prove the following Theorem. 
\begin{theorem}[=Theorem \ref{thm rough statement2}]
	\label{thm plumbing}
	Let $M_1$ and $M_2$ be smooth manifolds of the same dimension.
	Let $P$ be the plumbing of two cotangent bundles $T^*M_1 \# T^*M_2$ at one point.
	Then, there is an algorithm producing a Lefschetz fibration on $P$ from a pair of handle decomposition $D_1$ and $D_2$ of $M_1$ and $M_2$. 
\end{theorem}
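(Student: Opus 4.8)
The plan is to build the two cotangent-bundle fibrations separately and then fuse them along the critical handle coming from the plumbing point. First I would apply the algorithm of Theorem~\ref{thm main} to $D_1$ and to $D_2$, obtaining abstract Weinstein Lefschetz fibrations (Definition~\ref{def abstract Lefschetz fibration}) $\pi_i=(F_i;V_0^{(i)},V_1^{(i)},\dots)$ on $T^*M_i$. Since $\dim M_1=\dim M_2=n$, the proof of Theorem~\ref{thm main} produces, for each $i$, the \emph{same} base fiber $T^*S^{n-1}$ out of the unique $0$-handle $h_0^{(i)}$ (recall $M_0^{(i)}\simeq S^{n-1}\times[0,1]$), and $F_i$ is obtained from this $T^*S^{n-1}$ by attaching the $(2n-2)$-dimensional subcritical handles indexed by $\tilde D_i$. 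The essential observation is that the first critical handle $H_0^{(i),n}$, the cap of the $0$-handle, carries the zero-section vanishing cycle $V_0^{(i)}=S^{n-1}\subset T^*S^{n-1}$, and that the centre of its Lefschetz thimble is exactly the preimage of the centre of $h_0^{(i)}$, which by hypothesis is the plumbing point $p_i$.

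The algorithm then fuses $\pi_1$ and $\pi_2$ at this handle. I would take a single central fiber $F$ obtained from one copy of $T^*S^{n-1}$ by attaching the subcritical handles of $\tilde D_1$ and of $\tilde D_2$ along disjoint isotropic spheres — there is room for this because $\dim S^{n-1}=n-1$ and because the Legendrian modifications of Section~\ref{section modification of Legendrians} let me place the two families in disjoint angular sectors of $\partial\mathbb{D}^2$ — and I would identify the two zero sections $V_0^{(1)}$ and $V_0^{(2)}$ with a single cycle $V_0$. The output is the abstract Lefschetz fibration
\[\pi=(F;\,V_0,\,V_1^{(1)},\dots,V_{c_1-1}^{(1)},\,V_1^{(2)},\dots,V_{c_2-1}^{(2)}),\]
where $V_j^{(i)}$ are the vanishing cycles of all the remaining critical handles of $\pi_i$, arranged so that the thimbles of the $M_1$-cycles and of the $M_2$-cycles approach the shared handle $V_0$ from opposite sides in the base. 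Its number of critical points is $c_1+c_2-1$; as a check, for $M_1=M_2=S^n$ this gives $2+2-1=3$, the correct count for the $A_2$-Milnor fiber $T^*S^n\,\#\,T^*S^n$.

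To identify the total space of $\pi$ with $P$, I would recover $M_1$ and $M_2$ as Lagrangian skeleta inside it, following Section~\ref{subsect the zero sections}. The union of the Lefschetz thimbles of $V_0,V_1^{(1)},\dots,V_{c_1-1}^{(1)}$ assembles, handle by handle, into a copy of $M_1$ — this is precisely the skeleton computation of Lemma~\ref{lem handle decomposition of cotangent bundle} applied to $\tilde D_1$ — and symmetrically the thimbles of $V_0,V_1^{(2)},\dots,V_{c_2-1}^{(2)}$ assemble into $M_2$. Because the two families were arranged to meet the common handle $V_0$ from opposite base directions, the copies of $M_1$ and $M_2$ share only the centre of the $V_0$-thimble, namely $p$, and cross there transversally along the two Lagrangian directions of the local model $\mathbb{R}^n\cup i\mathbb{R}^n\subset\mathbb{C}^n=T^*\mathbb{R}^n$. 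After moving the base point to a neighborhood of $p$ as permitted by Remark~\ref{rmk remarks}, the skeleton of the total space is $M_1\cup_p M_2$ with a single transverse intersection at $p$, so the Weinstein Lagrangian neighborhood theorem — used exactly as in the proof of Lemma~\ref{lem handle decomposition of cotangent bundle} — identifies the completion of the total space with the plumbing $T^*M_1\,\#\,T^*M_2=P$.

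The main obstacle is the local analysis at the plumbing point. I must arrange the fusion so that, near the centre of the merged thimble, the Weinstein germ is the two-transverse-disk plumbing model and \emph{not}, for instance, two half-thimbles closing up into a single sphere (which would produce $T^*S^n$ rather than $T^*S^n\,\#\,T^*S^n$); this is exactly where the freedom to send the $M_1$- and $M_2$-thimbles off to opposite sides of $V_0$, together with the cyclic ordering of the vanishing cycles, is used. A secondary point, handled as in the critical-handle step of the proof of Theorem~\ref{thm main}, is that away from $p$ the zero sections $M_1$ and $M_2$ are disjoint, so their thimbles can be separated by Legendrian isotopies (negative Reeb flow) while the subcritical and critical handle conditions are maintained simultaneously for both families. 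Granting these local verifications, the remaining steps are direct adaptations of the cotangent-bundle case.
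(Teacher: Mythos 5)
Your proposal is correct and takes essentially the same approach as the paper: your fused central fiber (one shared $D^*S^{n-1}$ with the fiber handles coming from $\tilde{D}_1$ and $\tilde{D}_2$ attached disjointly---the paper arranges this via the hemispheres $D^*S_+$ and $D^*S_-$) and the merged zero-section vanishing cycle reproduce, up to cyclic permutation, exactly the paper's abstract fibration $(F;\, X_{m_1},\dots,X_1,\, Y_{m_2},\dots,Y_1,\, X_0=Y_0)$. Your identification of the total space with $P$ via the skeleton $M_1\cup_p M_2$ crossing transversally at the shared critical point is also the paper's argument, merely phrased globally rather than through the paper's two sub-fibrations $W_1$, $W_2$ and their intersection $W_1\cap W_2 \simeq D^*\mathbb{D}^n$.
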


In the first subsection, we briefly review the notion of plumbing spaces and we will describe the resulting Lefschetz fibration that Theorem \ref{thm plumbing} produces.
The proof of Theorem \ref{thm plumbing} will appear in Section \ref{subsection proof of plumbing theorem}.

\subsection{Plumbing spaces and their Lefschetz fibrations}
\label{subsection plumbing space}
\mbox{}

\noindent{\em Brief review for the plumbing procedure.}
First, we briefly review the construction of a plumbing space.
See \cite[Chapter 7.6]{Geiges08} or \cite[Section 2.3]{Abouzaid11}, for more details.

When we plumb two disk cotangent bundles $D^*M_1$ and $D^*M_2$ of the same dimension, we start the plumbing procedure by choosing a pluming point $p_i \in M_i$. 
Then, there exists a small neighborhood $U_i \subset M_i$ of $p_i$, such that $U_i \simeq \mathbb{D}^n$, where $n = \dim M_i$.
The disk cotangent bundles of $U_i$, $D^*U_i$ can be seen as $\mathbb{D}^n \times \mathbb{D}^n$.
Then, we identify $D^*U_1$ and $D^*U_2$ via the map 
\[f: D^*U_i \simeq \mathbb{D}^n \times \mathbb{D}^n \to \mathbb{D}^n \times \mathbb{D}^n \simeq D^*U_2, (x,y) \mapsto (y,-x).\]
The resulting plumbing space $P=T^*M_1 \# T^*M_2$ is given as the completion of 
\[D^*M_1 \sqcup D^*M_2 / (x,y) \sim f(x,y) \text{  for all  } (x,y) \in D^*U_1.\]

\begin{remark}
	\label{rmk sign of plumbing point}
	We note that the order of $M_1$ and $M_2$ is important in the above arguments. 
	In the plumbing space $P$, one can see $M_i$ as a submanifold of $P$. 
	Then, two submanifolds $M_1$ and $M_2$ intersect at one point, the plumbing point $p_1 = p_2$. 
	And, near the plumbing point, one can observe that $M_1$ is identified with the zero section of $D^*U_1$, and $M_2$ is identified with the cotangent fiber $D^*_{p_1}U_1$. 
	Let us choose a local Darboux chart $(x_1, \dots, x_n, y_1, \dots, y_n)$ near the plumbing point, satisfying that
	\begin{itemize}
		\item the symplectic form is given as $\sum_{i=1}^n d y_i \wedge d x_i$, and
		\item the base of $D^*U_1$ is coordinated by $(x_1, \dots, x_n)$ and the fiber of $D^*U_1$ is coordinated by $(y_1, \dots, y_n)$.
	\end{itemize}
	Then, as the {\em algebraic} intersection point of two submanifolds $M_1$ and $M_2$, the plumbing point has the sign $(-1)^{\tfrac{1}{2}n(n-1)}$. 
	If we change the order of $M_1$ and $M_2$ in the construction of $P$, one can observe that the sign of the intersection point can change based on the choice of $n$. 
\end{remark}
\vskip0.2in

\noindent{\em Construction of an abstract Lefschetz fibration.}
Theorem \ref{thm plumbing} produces a Lefschetz fibration for a plumbing space $P = T^*M_1 \# T^*M_2$. 
Before proving Theorem \ref{thm plumbing}, we describe the resulting (abstract) Lefschetz fibration of $P$. 

We set the notation first. 
Let 
\begin{gather*}
	D_1 = \{a_0, \dots, a_{m_1}\}, D_2 = \{b_0, \dots, b_{m_2}\}
\end{gather*}
be given handle decompositions of $M_1$ and $M_2$ respectively. 
By Definition \ref{def handle decomposition}, we have $N_i \leq m_i$ such that $a_j$ (resp.\ $b_k$) is a handle of index $<n$ if $j \leq N_1$ (resp.\ $k \leq N_2$). 

Let 
\begin{gather}
	\label{eqn notation}
	(F_1; X_{m_1}, \dots, X_0) \text{  and  } (F_2; Y_{m_2}, \dots, Y_0)
\end{gather} 
denote the abstract Lefschetz fibrations which are obtained by applying Theorem \ref{thm main} to $D_1$ and $D_2$.
We recall that $F_i$ is determined by handles of index $<n$ in $D_i$, i.e., 
\[\{a_0, \dots, a_{N_1}\}, \{b_0, \dots, b_{N_2}\}.\]
More precisely, the above handles of index $<n$ give a Weinstein handle decomposition of the regular fiber $F_i$. 
According to the Weinstein handle decomposition, one can construct $F_i$ by attaching $(2n-2)$-dimensional Weinstein handles to the disk cotangent bundle of $S^{n-1}$.
See Remark \ref{rmk M_0}.

We recall that $a_0$ and $b_0$ are $0$-handles. 
Thus, $\partial a_0, \partial b_0 \simeq S^{n-1}$. 
Let $S_+, S_-$ be the upper and lower hemisphere of $S^{n-1}$. 
Without loss of generality, one can assume that 
\[\partial_R a_i \cap \partial a_0 \subset S_+ \subset S^{n-1} \simeq \partial a_0, \partial_R b_j \cap \partial b_0 \subset S_- \subset S^{n-1} \simeq \partial b_0,\]
for all $i \in [1,N_1]$ and for all $j \in [1,N_2]$. 

Under the assumption, $F_1$ is obtained by attaching $(2n-2)$-dimensional Weinstein handles to 
\[\partial(D^*_{S_+}S^{n-1}) = \left\{(x,y) \in D^*S^{n-1} | x \in S_+, y \in D^*_xS^{n-1}\right\},\]
where $D^*$ means the disk cotangent bundle.
Similarly, $F_2$ is obtained by attaching $(2n-2)$-dimensional Weinstein handles to 
\[\partial(D^*_{S_-}S^{n-1}) = \left\{(x,y) \in D^*S^{n-1} | x \in S_-, y \in D^*_xS^{n-1}\right\}.\]

Now, we construct the regular fiber $F$ for $P$, by attaching $(2n-2)$-dimensional Weinstein handles to $D^*S^{n-1}$ as follows: 
We attach Weinstein handles to $D^*_{S_+}S^{n-1}$ (resp.\ $D^*_{S_-}S^{n-1}$) in the same way as we constructed $F_1$ (resp.\ $F_2$).
Then, one could understand $F_1$ and $F_2$ as subsets of $F$ so that 
\begin{gather*}
	F_ 1 \cup F_2 = F, \\
	F_1 \cap F_2 = D^*S^{n-1}.
\end{gather*} 	

Figure \ref{figure the fiber} is an example. 
The example case is the plumbing of two $T^*\mathbb{T}^2$ where $\mathbb{T}^2$ is the $2$-dimensional torus. 
The handle decompositions $D_1$ and $D_2$ are the same as the handle decomposition described in Figure \ref{figure decomposition of a torus}, a).	 	
Then, the fiber $F$ is given by attaching eight $1$-handles to $D^*S^1$.

\begin{figure}[h]
	\centering
	\input{the_fiber.txt}		
	\caption{The fiber after attaching eight $1$-handles is given. In the picture, the top and bottom line segments are identified. The labels mean that the segments having the same labels should be identified to each other, and the arrow indicates the way of identification. The red and blue curves are Lagrangians in the fiber, which are obtained by modifying the zero sections of $D^*S^1$. According to the proof of Theorem \ref{thm main}, the modified Lagrangians explain how to attach $1$-handles.}
	\label{figure the fiber}
\end{figure}

One can check that an exact Lagrangian in $F_i$ is an exact Lagrangian in $F$. 
Then,  
\begin{gather}
	\label{eqn Lefschetz fibration of plumbing}
	\pi := (F; X_{m_1}, \dots, X_1, Y_{m_2}, \dots, Y_1, X_0 = Y_0)
\end{gather}
is a well-defined abstract Lefschetz fibration. 
We note that the vanishing cycle $X_0 = Y_0$ is the zero section of $D^*S^{n-1} \subset F$. 

We would like to point out that because of the definition of Plumbing space, one can see $T^*M_i$ as a submanifold of their plumbing space $P$. 
In the total space of the abstract Lefschetz fibration given in \eqref{eqn Lefschetz fibration of plumbing}, one can find the submanifold $T^*M_i$ as follows: 
Let $W_i$ be the subset of $W$ such that the restriction of $\pi$ on $W_i$ satisfies 
\begin{itemize}
	\item the regular fiber of $\pi|_{W_i}$ is $F_i \subset F$, and 
	\item the target of the restriction $\pi|_{W_1}$ (resp.\ $\pi|_{W_2}$) is the interior of the red (resp.\ blue) circle given in Figure \ref{figure base plumbing}.
\end{itemize}
Then, $W_i$ can be seen as the total space of the Lefschetz fibrations given in \eqref{eqn notation}, i.e., $W_i$ is equivalent to $T^*M_i$. 

\begin{figure}[h]
	\centering
\begingroup%
  \makeatletter%
  \providecommand\color[2][]{%
    \errmessage{(Inkscape) Color is used for the text in Inkscape, but the package 'color.sty' is not loaded}%
    \renewcommand\color[2][]{}%
  }%
  \providecommand\transparent[1]{%
    \errmessage{(Inkscape) Transparency is used (non-zero) for the text in Inkscape, but the package 'transparent.sty' is not loaded}%
    \renewcommand\transparent[1]{}%
  }%
  \providecommand\rotatebox[2]{#2}%
  \newcommand*\fsize{\dimexpr\f@size pt\relax}%
  \newcommand*\lineheight[1]{\fontsize{\fsize}{#1\fsize}\selectfont}%
  \ifx\svgwidth\undefined%
    \setlength{\unitlength}{226.77165354bp}%
    \ifx\svgscale\undefined%
      \relax%
    \else%
      \setlength{\unitlength}{\unitlength * \real{\svgscale}}%
    \fi%
  \else%
    \setlength{\unitlength}{\svgwidth}%
  \fi%
  \global\let\svgwidth\undefined%
  \global\let\svgscale\undefined%
  \makeatother%
  \begin{picture}(1,0.9875)%
    \lineheight{1}%
    \setlength\tabcolsep{0pt}%
    \put(0,0){\includegraphics[width=\unitlength,page=1]{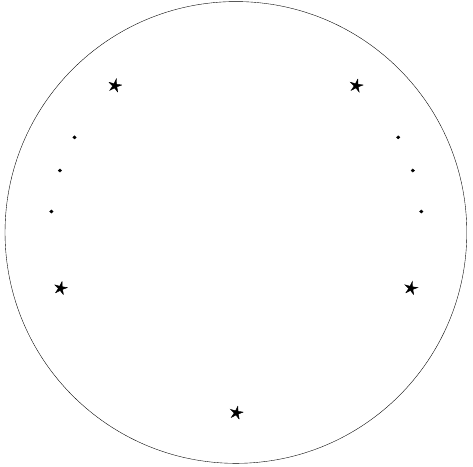}}%
    \put(0.42484981,0.07194898){\makebox(0,0)[lt]{\lineheight{1.25}\smash{\begin{tabular}[t]{l}$x_0 = y_0$\end{tabular}}}}%
    \put(0.83245307,0.33443812){\makebox(0,0)[lt]{\lineheight{1.25}\smash{\begin{tabular}[t]{l}$x_{m_1}$\end{tabular}}}}%
    \put(0.19456418,0.76137366){\makebox(0,0)[lt]{\lineheight{1.25}\smash{\begin{tabular}[t]{l}$y_{m_2}$\end{tabular}}}}%
    \put(0.71954241,0.76560735){\makebox(0,0)[lt]{\lineheight{1.25}\smash{\begin{tabular}[t]{l}$x_1$\end{tabular}}}}%
    \put(0.10352561,0.3325006){\makebox(0,0)[lt]{\lineheight{1.25}\smash{\begin{tabular}[t]{l}$y_1$\end{tabular}}}}%
    \put(0,0){\includegraphics[width=\unitlength,page=2]{base_plumbing.pdf}}%
  \end{picture}%
\endgroup%
		
	\caption{The star marks are singular values. The vanishing cycles corresponding to singular values $x_i$ and $y_j$ are $X_i$ and $Y_j$ respectively. The red and blue circles are boundaries of the targets of $\pi|_{W_1}$ and $\pi|_{W_2}$.}
	\label{figure base plumbing}
\end{figure}

\subsection{Proof of Theorem \ref{thm plumbing}}
\label{subsection proof of plumbing theorem}
We prove Theorem \ref{thm plumbing} in an inductive manner that is similar to the proof of Theorem \ref{thm main}.
Thus, in the proof below, we emphasize the difference between proofs of Theorem \ref{thm main} and Theorem \ref{thm plumbing}, and skip some details if the details are the same as the proof of Theorem \ref{thm main}.
\vskip0.2in 

\noindent{\em Weinstein handle decomposition of the plumbing space.}
In the proof of Theorem \ref{thm main}, we constructed a Weinstein manifold $W_i = F_i \times \mathbb{C}$ by attaching Weinstein handles inductively. 
And the last step of the induction is to construct a Weinstein manifold equivalent to the starting cotangent bundle.
Similarly, we construct the plumbing space $P = T^*M_1 \# T^*M_2$ by attaching Weinstein handles inductively. 
In order to describe the construction, let us set the following notation: 
As we did in the preceding subsection, we let 
\[D_1 = \{a_0, \dots, a_{m_1}\}, D_2 = \{b_0, \dots, b_{m_2}\}\]
denote the given handle decomposition of $M_1$ and $M_2$. 
Also, we have $N_i \leq m_i$ such that $a_j$ (resp.\ $b_k$) is a handle of index $<n$ if $j \leq N_1$ (resp.\ $k \leq N_2$). 

Then, we consider another handle decomposition $\tilde{D}_1$ (resp.\ $\tilde{D}_2$) obtained by dividing $a_1, \dots, a_{N_1}$ (resp.\ $b_1, \dots, b_{N_2}$) into $a_i^{ori}, a_i^{n-1}, a_i^n$ (resp.\ $b_1^{ori}, b_i^{n-1}, b_i^n$). 
We would like to emphasize that {\em we do not divide the index zero handles $a_0$ and $b_0$}, differently from the proof of Theorem \ref{thm main}. 

Applying Theorem \ref{lem handle decomposition of cotangent bundle}, we have a Weinstein handle decomposition of $T^*M_i$, 
\begin{gather*}
	W_{\tilde{D}_1} = \{A_0, A_1^{ori}, A_1^{n-1}, \dots, A_{N_1}^{ori}, A_{N_1}^{n-1}, A_1^n, \dots, A_{N_1}^n, A_{N_1+1}, \dots, A_{m_1}\},\\
	W_{\tilde{D}_2} = \{B_0, B_1^{ori}, B_1^{n-1}, \dots, B_{N_2}^{ori}, B_{N_2}^{n-1}, B_1^n, \dots, B_{N_2}^n, B_{N_2+1}, \dots, B_{m_2}\}.
\end{gather*}
We note that $A_0$ and $B_0$ are the zero handles, so we can simply see that they are equivalent to $\mathbb{D}^{2n}$ embedded in $\mathbb{R}^{2n}$.
Then, we can identify $A_0$ (resp.\ $B_0$) with $\mathbb{D}^{2n}$ so that $a_0 \subset A_0$ (resp.\ $b_0 \subset B_0$) is identified with $\mathbb{D}^{2n} \cap \mathbb{R}^n \times \{(0, \dots, 0)\}$ (resp.\ $\{(0, \dots, 0)\}\times \mathbb{R}^n$).

When one attaches the Weinstein handles 
\begin{itemize}
	\item $A_1^{ori}, A_1^{n-1}, \dots, A_{N_1}^{ori}, A_{N_1}^{n-1}, A_1^n, \dots, A_{N_1}^n, A_{N_1+1}, \dots, A_{m_1}$ along $\Lambda_1 := \partial a_0 \subset \partial \mathbb{D}^{2n}$, and
	\item $B_1^{ori}, B_1^{n-1}, \dots, B_{N_2}^{ori}, B_{N_2}^{n-1}, B_1^n, \dots, B_{N_2}^n, B_{N_2+1}, \dots, B_{m_2}$ along $\Lambda_2 := \partial b_0 \subset \partial \mathbb{D}^{2n}$,
\end{itemize}
one can recover the plumbing space $P$. 
\vskip0.2in

\noindent{\em The base step.}
{\em The base step is one of the biggest difference between two inductive construction of Lefschetz fibrations in the proof of Theorems \ref{thm main} and \ref{thm plumbing}. }
The base step of our inductive construction is a Lefschetz fibration of $\mathbb{D}^{2n} \simeq A_0 \simeq B_0$. 
There exists a well-known (abstract) Lefschetz fibration for $\mathbb{D}^{2n}$ given as 
\[W_0 := \left(F_0 = D^*S^{n-1}; \text{  the zero section  } S^{n-1}\right).\]
For convenience, we call the vanishing cycle as $X_0$. 

Without loss of generality, let us identify the above abstract Lefschetz fibration with a specific Lefschetz fibration $\pi_0: \mathbb{D}^{2n} \to \mathbb{D}^2 \subset \mathbb{C}$ such that whose singular value is located at the center of the base.
See Figure \ref{figure the simple LF}.
Then, the union of vanishing cycles along the red (resp.\ blue) curve in Figure \ref{figure the simple LF} becomes a Lagrangian disk centered at the singular point. 
Moreover, those two Lagrangian disks intersect transversally.
We identify the disk along the red (resp.\ blue) curve with $a_0 \subset A_0 \simeq \mathbb{D}^{2n}$ (resp.\ $b_0 \subset B_0 \simeq \mathbb{D}^{2n}$), and we will attach the other Weinstein handles using the zero sections of $\pi_0^{-1}(\pm 1) \simeq D^*S^{n-1}$. 
For the later use, let $\Lambda_1$ denote the Legendrian sphere in $\pi_0^{-1}(1) \subset \partial_\infty W_0$ corresponding to $\partial a_0$, and $\Lambda_2$ denote the Legendrian sphere in $\pi_0^{-1}(-1) \subset \partial_\infty W_0$ corresponding to $\partial b_0$.

\begin{figure}[h]
	\centering
\begingroup%
  \makeatletter%
  \providecommand\color[2][]{%
    \errmessage{(Inkscape) Color is used for the text in Inkscape, but the package 'color.sty' is not loaded}%
    \renewcommand\color[2][]{}%
  }%
  \providecommand\transparent[1]{%
    \errmessage{(Inkscape) Transparency is used (non-zero) for the text in Inkscape, but the package 'transparent.sty' is not loaded}%
    \renewcommand\transparent[1]{}%
  }%
  \providecommand\rotatebox[2]{#2}%
  \newcommand*\fsize{\dimexpr\f@size pt\relax}%
  \newcommand*\lineheight[1]{\fontsize{\fsize}{#1\fsize}\selectfont}%
  \ifx\svgwidth\undefined%
    \setlength{\unitlength}{255.11811024bp}%
    \ifx\svgscale\undefined%
      \relax%
    \else%
      \setlength{\unitlength}{\unitlength * \real{\svgscale}}%
    \fi%
  \else%
    \setlength{\unitlength}{\svgwidth}%
  \fi%
  \global\let\svgwidth\undefined%
  \global\let\svgscale\undefined%
  \makeatother%
  \begin{picture}(1,0.87777778)%
    \lineheight{1}%
    \setlength\tabcolsep{0pt}%
    \put(0,0){\includegraphics[width=\unitlength,page=1]{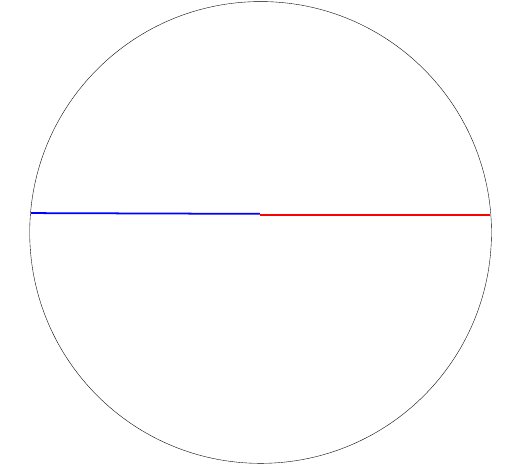}}%
    \put(0.06830221,0.49172177){\color[rgb]{0,0,0}\makebox(0,0)[lt]{\lineheight{1.25}\smash{\begin{tabular}[t]{l}$-1 \in \mathbb{C}$\end{tabular}}}}%
    \put(0.92616062,0.49171931){\color[rgb]{0,0,0}\makebox(0,0)[lt]{\lineheight{1.25}\smash{\begin{tabular}[t]{l}$1 \in \mathbb{C}$\end{tabular}}}}%
    \put(0,0){\includegraphics[width=\unitlength,page=2]{the_simple_LF.pdf}}%
  \end{picture}%
\endgroup%
		
	\caption{The star mark means the singular value at the center of the base, and the red and blue curves have an end point at the singular value, and have another endpoint at $\pm 1$, respectively.}
	\label{figure the simple LF}
\end{figure}
\vskip0.2in

\noindent{\em The differences on the vertical boundary.}
Since the base step is not a product Lefschetz fibration, its vertical boundary should be a mapping torus of the monodromy map, i.e., a generalized Dehn twist along $X_0$.
For more details, see \cite{Bahar20}.
{\em It is also one of the biggest difference from the proof of Theorem \ref{thm main}}. 
For the later convenience, let $\tau$ denote the Dehn twist along $X_0$.

\begin{remark}
	\label{rmk monodromy map}
	The difference will affect when we take a Legendrian isotopy later. 
	We recall that in the proof of Theorem \ref{thm main}, we constructed Legendrian isotopies $\gamma_1^s$ and $\gamma_2^s$ in Section \ref{subsubsection crossing the base}. 
	We will take similar isotopies in the proof of Theorem \ref{thm plumbing}, but, the construction of $\gamma_1^s$ and $\gamma_2^s$ is a bit different from the construction in Section \ref{subsubsection crossing the base}, if the isotopies touch the dashed line in Figure \ref{figure the simple LF}. 
	More precisely, when the isotopies pass the dashed line from the left-handed side to the right-handed side, we apply $\tau$ for the fiber coordinate. 
\end{remark}
\vskip0.2in 

\noindent{\em Attaching subcritical handles for $M_1$.}
The next step is to attach subcritical handles for $M_1$, i.e., $A_1^{ori}, \dots, A_{N_1}^{n-1}$.
For that we first choose a small $\theta_0 >0$ such that $2(N_1+N_2+1) \theta_0 < \pi$. 
We point out that the condition on $\theta_0$ is {\em different from that in Section \ref{subsection induction hypotheses}}, see (IH 2). 

To attach the subcritical handles, we first take a Legendrian isotopy of $\Lambda_1$ so that the resulting Legendrian is located at $\pi_0^{-1}\left(\cos(-\theta_0)+\sqrt{-1}\sin(-\theta_0)\right)$.
Then, we take Legendrian isotopies of the new Legendrian inductively, as we did in the proof of Theorem \ref{thm main}, especially in Sections \ref{subsubsection push to the corner}--\ref{subsubsection attaching subcritical handles}.
In other words, we push the Legendrian to corners and then cross the Legendrian over the other side of the base. 

The contents of Section \ref{subsubsection push to the corner} works without any modification and it pushes the Legendrian to the corner. 
We also note that the construction of Legendrian isotopies in Section \ref{subsubsection crossing the base} gives Legendrian isotopies crossing the base. 
More precisely, the constructed Legendrian isotopies, i.e., $\gamma_1^s$ and $\gamma_2^s$, meet the dashed line only at the central singular fiber in Figure \ref{figure the simple LF}.
See Figure \ref{figure goodcase}.
However, at the singular fiber, the image of isotopies are lying on the horizontal boundary, which is still given as the product of the boundary of the fiber and the base disk, because of the {\em regularity along $\partial^h W$ condition} in Definition \ref{def Lefschetz fibration}.
Thus, the construction in Section \ref{subsubsection crossing the base} still gives Legendrian isotopies.
\begin{figure}[h]
	\centering
	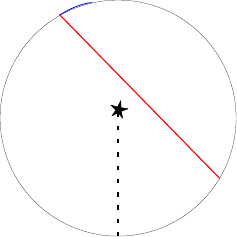		
	\caption{The red and blue curves are the projection-to-the-base images of $\gamma_1^s$ and $\gamma_2^s$, constructed by the construction in Section \ref{subsubsection crossing the base}.}
	\label{figure goodcase}
\end{figure}

Because we have Legendrian isotopies pushing $\Lambda_1$ to the corner and crossing the base, we can attach subcritical handles $A_i^{ori}$ and $A_i^{n-1}$ in each inductive step.
Then, after $N_1$-many inductive steps, we have a fiber $F_1$ and Legendrian spheres corresponding to $\partial a_i^n$ for $i =1, \dots, N_1$ and $\partial a_j$ for $j = N_1+1, \dots, m_1$.
Moreover, the Legendrian spheres corresponding to $\partial a_i^n$ for $i =1, \dots, N_1$ (resp.\ $\partial a_j$ for $j = N_1+1, \dots, m_1$) should lie in the region 
\[\left[\pi-(N_1+1)\theta_0,\pi-\theta_0\right], \text{  (resp.\ $\left[-(N_1+1)\theta_0,-N_1\theta_0\right]$)} \subset \mathbb{R}/\mathbb{Z} = S^1 = \partial \mathbb{D}^2. \]

The next step is to attach subcritical handles for $M_2$. 
Before that, we would like to take simple Legendrian isotopies of the obtained Legendrian spheres, so that after the isotopy, Legendrians corresponding to $\partial a_i^n$ for $i =1, \dots, N_1$ (resp.\ $\partial a_j$ for $j = N_1+1, \dots, m_1$) are lying in the region 
\[\left[\pi-(N_1+N_2 + 1) \theta_0, \pi-(N_2+1) \theta_0 \right] \text{(resp.\ $\left[\theta_0, (N_1+1)\theta_0\right]$)}.\]
We note that since $0 < \theta_0 < \tfrac{\pi}{2(N_1+N_2+1)}$, 
\[(N_1+1) \theta_0< \pi- (N_1+N_2+1) \theta_0.\]
Thus, after the isotopy, Legendrians corresponding to $\partial a_i^n$ for $i =1, \dots, N_1$ and Legendrians corresponding to $\partial a_j$ for $j = N_1+1, \dots, m_1$ do not intersect. 

\begin{remark}
	\label{rmk vanishing cylce 1}
	The resulting Legendrian spheres, corresponding to $\partial a_i^n$ and $\partial a_j$ for proper $i$ and $j$, will give the vanishing cycles of the resulting Lefschetz fibration. 
	Because their construction is the same as the case of $T^*M_1$, the corresponding vanishing cycles are $X_1, \dots, X_{m_1} \subset F_1$. 
\end{remark}
\vskip0.2in

\noindent{\em Attaching subcritical handles for $M_2$.}
Similar to the above step and Sections \ref{subsubsection push to the corner}--\ref{subsubsection attaching subcritical handles}, in order to attach the subcritical handles of $P$ from $M_2$, we need to push $\Lambda_2$ to the corner, and then need to cross the Legendrian over the base. 
We note that pushing-to-the-corner step is the same as before, but {\em the crossing-the-base step should be different from what we did before}, because of the following two reasons: 
The first reason is that the Legendrian in our interest is lying at the fiber at $-1$, which is different from before. 
The second, and more important reason, is that, in the plumbing case, we do not have a product Lefschetz fibration in each inductive step. 

As considering the two reasons, we construct the following Legendrian isotopies $\gamma_3^s$ and $\gamma_4^s$ with the notations used in Section \ref{section the proof of Theorem main}:
\begin{gather}
	\label{eqn gamma_3,4}
	\begin{gathered}
		\gamma_3^s : \partial \mathbb{D}^{n-k}_{2\epsilon} \times S^{k-1} \times [0,1] \to \partial (F_i \times \mathbb{D}^2),\\
		(p, q, t) \mapsto \big(\boldsymbol{\Psi_i^{t,s}}(p,q), (1-t) \cos (-c g(p,q)+\theta_*+\boldsymbol{\pi}) + t \cos(-c g(p,q)+\theta_*+s+\boldsymbol{\pi}), \\
		(1-t) \sin (-c g(p, q) + \theta_*+\boldsymbol{\pi}) + t \sin(-c g(p,q)+\theta_*+s+\boldsymbol{\pi})\big)\\
		\gamma_4^s : \mathbb{D}^{n-k}_{2\epsilon} \times S^{k-1} \to \partial (F_i \times \mathbb{D}^2), \\
		(p,q) \mapsto \Big(\boldsymbol{\Psi_i^{1,s}}(p,q), \cos\big(-c g(p,q) + \theta_* + s + h_s(p,q)+\boldsymbol{\pi}\big), \sin\big(-c g(p,q) + \theta_* +s + 	h_s(p,q)+\boldsymbol\pi\big) \Big),
	\end{gathered}
\end{gather}
where 
\[\Psi_i^{t,s}(p,q) := \begin{cases}
	\left(\Phi_i^{-\tfrac{1}{c}t\sin s}\circ j_{i+1}\right)(p,q), \text{  if  } (1-t)\cos(-cg(p,q) + \pi+\theta_*)+t \cos(-cg(p,q) + \pi+\theta_*+s) <0, \\
	\left(\tau \circ \Phi_i^{-\tfrac{1}{c}t\sin s}\circ j_{i+1}\right)(p,q), \text{  if  } (1-t)\cos(-cg(p,q) + \pi+\theta_*)+t \cos(-cg(p,q) + \pi+\theta_*+s) \geq 0.
\end{cases}\]
The above $\gamma_3^s$ and $\gamma_4^s$ in \eqref{eqn gamma_3,4} are different from $\gamma_1^s$ and $\gamma_2^s$ in \eqref{eqn gamma_1} and \eqref{eqn gamma_2} by the parts written in bold font. 
We added $\pi$ because of the first reason, and we replace $\Phi_i^{-\tfrac{1}{c}t\sin s}$ with $\Psi_i^{t,s}$ because of the second reason. 

We note that $\tau$ is a generalized Dehn twist along $X_0$, thus $\tau$ is compactly supported symplectomorphism.
Thus, $\gamma_1^s$ and $\gamma_3^s$ are the same except the bold-$\pi$-parts since on the domain of $\gamma_1^s$ and $\gamma_3^s$, $(p,q)$ lies on the boundary of the fiber. 
And, since the vertical boundary is not a product space, $\text{the fiber} \times S^1$, but the mapping torus of $\tau$, $\gamma_4^s$ becomes a Legendrian isotopy. 
See the left picture of Figure \ref{figure badcase}, which describes the projection images of $\gamma_3^s$ and $\gamma_4^s$ on the base.

Through the Legendrian isotopies $\gamma_3^s$ and $\gamma_4^s$, we can deform the Legendrian in each inductive step, and we can attach the subcritical handles $B_i^{ori}$ and $B_i^{n-1}$.
We note again that $\tau$ is a generalized Dehn twist and is compactly supported, thus on the horizontal boundary, the Legendrian isotopy is the same as what we took in Section \ref{section the proof of Theorem main}. 
Thus, the resulting fiber is the same as $F$ defined in Section \ref{subsection plumbing space}, which is obtained by attaching $\{B_i^{ori}, B_i^{n-1}|i =1, \dots, N_2\}$ to $F_1$. 

\begin{figure}[h]
	\centering
	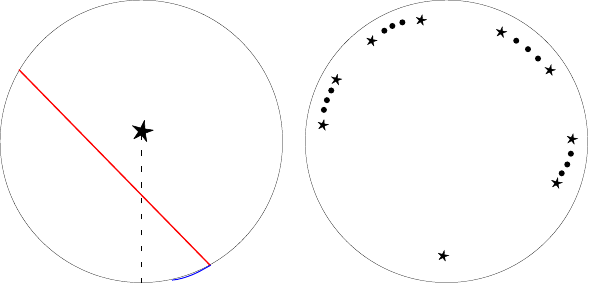		
	\caption{The left describes the projection images of the Legendrian isotopies $\gamma_3^s$ (red) and $\gamma_4^s$ (blue). The right describes the base of resulting Lefschetz fibration with singular values. The star marked points are singular values. Each singular value is labeled by the corresponding vanishing cycle.}
	\label{figure badcase}
\end{figure}

We also have the Legendrian spheres along which the critical handles $B_i^n$ for $i=1, \dots, N_2$ and $B_j$ for $j=N_2+1, \dots, m_2$ will be attached.
These Legendrian spheres satisfy the following: 
\begin{itemize}
	\item By the Legendrian isotopy we took above, the Legendrian spheres are projected down to two regions of the boundary of the base.
	The first region is $[-N_2\theta_0, 0]$ where the attaching spheres of $B_i^n$ for $i =1, \dots, N_2$ are lying, and the second region is $[\pi-N_2\theta_0, \pi]$ where the attaching spheres of $B_j$ for $j= N_2 +1, \dots, m_2$ are lying. 
	\item We note that in Section \ref{subsection plumbing space}, we described the Lefschetz fibration of $T^*M_2$, which is written as 
	\[(F_2; Y_{m_2}, \dots, Y_1, Y_0).\]
	Among the vanishing cycles, if $i=1, \dots, N_2$, $Y_i$ corresponds to the attaching sphere of $B_i^n$. 
	From the above construction, if one finds the vanishing cycle in the fiber $F$, corresponding to the attaching sphere of $B_i^n$, it must be $\tau(Y_i)$.
	\item Similarly, one can observe that for $j= N_2+1, \dots, m_2$, the vanishing cycle corresponding to the attaching sphere of $B_j$ is $Y_j$. 
\end{itemize}
\vskip0.2in

\noindent{\em Attaching critical handles.}
The final step of the inductive construction is to attach the critical handles along the Legendrian spheres obtained by above procedure. 
Then, from the above arguments, the resulting Lefschetz fibration can be written as the following abstract Lefschetz fibration:
\[\left(F; X_{m_1}, \dots, X_1, Y_{m_2}, \dots, Y_{N_2+1}, X_0 = Y_0, \tau(Y_{N_2}), \dots, \tau(Y_1)\right).\]
See the right picture of Figure \ref{figure badcase}.

Moreover, by taking Hurwitz moves, one can obtain an abstract Lefschetz fibration 
\[(F; X_{m_1}, \dots, X_1, Y_{m_2}, \dots, Y_1, X_0 = Y_0),\]
which is already given in Equation \eqref{eqn Lefschetz fibration of plumbing}.
\qed

\section{Sketch of the proof of Theorem \ref{thm rough statement3}}
\label{section sketch of the proof of theorem}

In Section \ref{section Lefschetz fibrations on plumbings}, we described a way of constructing Lefschetz fibrations defined on plumbings of two cotangent bundles. 
The idea in Section \ref{section Lefschetz fibrations on plumbings} is to use a simple Weinstein handle decomposition of a plumbing space, which one can construct by identifying the index zero handles of two cotangent bundles. 
This idea can be generalized to more plumbing spaces easily if the plumbings have a tree as their plumbing patters.
The modified idea is to identify the unique zero handle of a cotangent bundle and a {\em critical} handle of another cotangent bundle. 

To be more precise, let us recall that a critical handle has the unique zero of the inherent Liouville vector flow. 
With respect to the negative Liouville flow, the stable and unstable manifolds of the unique zeros are both disk $\mathbb{D}^n$. 
The boundary of the stable manifold is the attaching sphere of the critical handle. 

On the other hand, let us consider a Weinstein handle decomposition of a cotangent bundle is obtained by applying Lemma \ref{lem handle decomposition of cotangent bundle}.
Then, the Weinstein handle decomposition has a unique zero handle, and its core Lagrangian is a disk $\mathbb{D}^n$. 
One can easily check that if a handle in the Weinstein handle decomposition is attached to the unique zero handle, then the handle is attached along the {\em boundary of the core Lagrangian} of the unique zero handle. 
In other words, there exists a Legendrian sphere on the unique zero handle such that other handles are attached along only the Legendrian sphere. 

Now, when one plumbs two cotangent bundles $T^*M_1$ and $T^*M_2$, one can choose a critical handle of $T^*M_1$. 
And, one can attach the Weinstein handles of $T^*M_2$, having positive indexes, along the {\em boundary of the unstable manifold of the unique zero}.
Then, the procedure gives us a Weinstein handle decomposition of the plumbing space.
We note that the resulting Weinstein handle decomposition is not the sense of Definition \ref{def Weinstein handle decomposition}, since it allows a handle to be attached on a critical handle. 

Comparing this idea to the idea given in Section \ref{section Lefschetz fibrations on plumbings}, i.e., the idea identifying two index $0$ handles, one can observe that the new idea can be applied to the plumbing of three or more cotangent bundles.
Since each cotangent bundle has a unique zero handle, the idea in Section \ref{section Lefschetz fibrations on plumbings} is applied to plumbings of two cotangent bundles.
However, since a Weinstein manifold has as many critical handles as we need by adding canceling pairs, we can apply the new idea to the plumbings of three or more cotangent bundles if the plumbings has a tree as their plumbing patterns.

The idea gives us a Weinstein handle decomposition of a plumbing space, and the proof of Theorem \ref{thm plumbing} or its modification can construct a Lefschetz fibration from the Weinstein handle decomposition.
However, the construction seems quite complicated when we consider the plumbings of multiple cotangent bundles, and would be challenging to write in a formal manner. 
Thus, in the rest of the current paper, we discuss plumbings of a simple type.
In other words, we will prove Theorem \ref{thm rough statement3}.

We will prove Theorem \ref{thm rough statement3} in Section \ref{section proof of plumbing tree theorem}, but we give a sketch of the proof in the present section. 
\begin{proof}[Sketch of the proof of Theorem \ref{thm rough statement3}]
	Let $T$ be a tree, and let $P$ denote the plumbing of $T^*S^n$ whose plumbing pattern is $T$. 
	First, we will give an order on the set of vertices of $T$ in Section \ref{section an order on a tree}.
	Then, we will prove Theorem \ref{thm rough statement3} by an induction on the ordered set of vertices.
	
	Let $\{v_1, \dots, v_m\}$ be the ordered set of vertices of $T$. 
	Then, we consider a subtree $T^{(k)}$ of $T$ such that the vertices of $T^{(k)}$ is $\{v_1, \dots, v_k\}$. 
	
	Let $P_k$ denote the plumbing space of $k$ copies of $T^*S^n$, along the plumbing pattern $T^{(k)}$. 
	The induction is to construct a Lefschetz fibration of $P_{k+1}$ from that of $P_k$. 
	This inductive step can be proven by a similar argument with Section \ref{section Lefschetz fibrations on plumbings}.
	More detailed proof will be given in Section \ref{section proof of plumbing tree theorem}.
	
	Then, the induction on $k$ will completes the proof. 
\end{proof}

\section{An order on a tree}
\label{section an order on a tree} 
Let $T$ be a tree, i.e., a graph without a cycle. 
For convenience, we define the following notation. 

\begin{definition}
	\label{def V(T) and E(T)}
	\mbox{}
	\begin{enumerate}
		\item For a tree $T$, let {\em $V(T)$} and {\em $E(T)$} denote the set of vertices and the set of edges of $T$, respectively. 
		\item For any $v \in V(T)$, let {\em $E_v(T)$} denote the set of edge $e$ such that $v$ is an end point of $e$.
		\item A vertex $v \in V(T)$ is a {\em boundary vertex of $T$} if there is only one edge $e \in E(T)$ such that $v$ is an end point of $e$, i.e., $|E_v(T)|=1$. 
	\end{enumerate}	
\end{definition}

The goal of this section is to set notations which we will use in Sections \ref{section the algorithm for the plumbings along trees} and \ref{section proof of plumbing tree theorem} and to fix an order of $V(T)$ satisfying the following: 
Let 
\[V(T):= \{ v_0, v_1, \dots, v_m\}\]
be an ordered set.
Then, there is a sequence of subtree $T^{(k)}$ such that
\begin{itemize}
	\item $V(T^{(k)}) = \{v_0, \dots, v_k\}$, and
	\item $T^{(1)} \subset T^{(2)} \subset \dots \subset T^{(m)} = T$.
\end{itemize}
A such order will be defined in Definition \ref{def order of vertices}, and to state it, we would like to define the notion of {\em embedded tree} (Definition \ref{def embedded tree}) and {\em rooted tree} (Definition \ref{def rooted tree}).

We point out that every tree is planer, i.e., there exists an embedding $f: T \to \mathbb{R}^2$. 

\begin{definition}
	\label{def embedded tree}
	An {\em embedded tree} is a pair 
	\[(T, f : T \to \mathbb{R}^2),\]
	such that $f$ is an embedding of a tree $T$.  
\end{definition}
We simply say that $T$ is an embedded tree without mentioning a specific embedding $f$. 

When one has an embedded tree $T$, there exists a natural cyclic order on the set $E_v(T)$ for any $v \in V(T)$.
See Figure \ref{figure cyclic order}. 

Similarly, one can define a cyclic order on the set of all boundary vertices of $T$.
For more detailed explanation, we choose a closed subset $D \subset \mathbb{R}^2$ such that 
\begin{itemize}
	\item $D$ is homeomorphic to a topological disk $\mathbb{D}^2$, 
	\item $T \subset D$, and
	\item $\partial D \cap T$ is the set of all boundary vertices. 
\end{itemize}
Then, the orientation on $\partial D$ induces a cyclic order on the set of boundary vertices. 
Figure \ref{figure cyclic order} gives an example of the cyclic orders on $E_v(T)$ and the set of boundary vertices. 

 \begin{figure}[h]
	\centering
\begingroup%
  \makeatletter%
  \providecommand\color[2][]{%
    \errmessage{(Inkscape) Color is used for the text in Inkscape, but the package 'color.sty' is not loaded}%
    \renewcommand\color[2][]{}%
  }%
  \providecommand\transparent[1]{%
    \errmessage{(Inkscape) Transparency is used (non-zero) for the text in Inkscape, but the package 'transparent.sty' is not loaded}%
    \renewcommand\transparent[1]{}%
  }%
  \providecommand\rotatebox[2]{#2}%
  \newcommand*\fsize{\dimexpr\f@size pt\relax}%
  \newcommand*\lineheight[1]{\fontsize{\fsize}{#1\fsize}\selectfont}%
  \ifx\svgwidth\undefined%
    \setlength{\unitlength}{141.73228346bp}%
    \ifx\svgscale\undefined%
      \relax%
    \else%
      \setlength{\unitlength}{\unitlength * \real{\svgscale}}%
    \fi%
  \else%
    \setlength{\unitlength}{\svgwidth}%
  \fi%
  \global\let\svgwidth\undefined%
  \global\let\svgscale\undefined%
  \makeatother%
  \begin{picture}(1,0.88)%
    \lineheight{1}%
    \setlength\tabcolsep{0pt}%
    \put(0,0){\includegraphics[width=\unitlength,page=1]{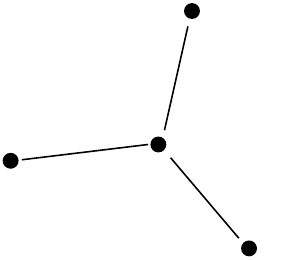}}%
    \put(-0.00541607,0.25265553){\makebox(0,0)[lt]{\lineheight{1.25}\smash{\begin{tabular}[t]{l}$a$\end{tabular}}}}%
    \put(0.25172848,0.30044001){\makebox(0,0)[lt]{\lineheight{1.25}\smash{\begin{tabular}[t]{l}$e_1$\end{tabular}}}}%
    \put(0.68305322,0.23142807){\makebox(0,0)[lt]{\lineheight{1.25}\smash{\begin{tabular}[t]{l}$e_2$\end{tabular}}}}%
    \put(0.60714001,0.62134557){\makebox(0,0)[lt]{\lineheight{1.25}\smash{\begin{tabular}[t]{l}$e_3$\end{tabular}}}}%
    \put(0.86938538,0.02094162){\makebox(0,0)[lt]{\lineheight{1.25}\smash{\begin{tabular}[t]{l}$b$\end{tabular}}}}%
    \put(0.67615195,0.82493083){\makebox(0,0)[lt]{\lineheight{1.25}\smash{\begin{tabular}[t]{l}$c$\end{tabular}}}}%
    \put(0.57953523,0.37290258){\makebox(0,0)[lt]{\lineheight{1.25}\smash{\begin{tabular}[t]{l}$v$\end{tabular}}}}%
  \end{picture}%
\endgroup%
		
	\caption{An embedded tree $T=D_4$ with three boundary vertices $\{a, b, c\}$, a non-boundary vertex $v$, and three edges $\{e_1, e_2, e_3\}$. 
	We note that $E_v(T)= \{e_1, e_2, e_3\}$ (resp.\ the set of boundary vertices $\{a, b, c\}$) is a cyclically ordered set.}
	\label{figure cyclic order}
\end{figure} 

\begin{remark}
	\label{rmk equivalent embedded trees}
	The cyclic orders are given by the orientation of $\mathbb{R}^2$. 
	Thus, for two embedded trees $(T,f_1)$ and $(T,f_2)$ of the same tree $T$, if there is an orientation preserving diffeomorphism $h : \mathbb{R}^2 \to \mathbb{R}^2$ such that $h \circ f_1 = f_2$, then two embedded trees induce the same cyclic orders. 
\end{remark}

We also define the notion of {\em rooted tree}.
\begin{definition}
	\label{def rooted tree}
	Let $T$ be a tree.
	\begin{enumerate}
		\item A {\em root} of $T$ is a pair $(v \in V(T), e \in E(T))$ such that $v$ is an end point of $e$. 
		\item A {\em rooted tree} $T$ is a tree equipped with a specific choice of root. 
	\end{enumerate}
\end{definition}

For simplicity, by a tree $T$, we mean an embedded tree $T$ with root $(v_0,e_0)$.

Let $T$ be a rooted tree with root $(v_0, e_0)$. 
Then, it is easy to check the following facts. 
\begin{itemize}
	\item For any $e \in E(T)$, one can equip a natural orientation on $e$ so that $e$ is ``going away'' from the root vertex $v_0$. 
	According to the natural orientation on $e \in E(T)$, one can see an edge $e$ as an arrow. 
	We use terms heads/tails of edges from this view-point. 
	\item For any $v \in V(T)$, there is a unique sequence of edges $e_1, \dots, e_k$ connecting $v$ and the root $v_0$ such that the tail of $e_1$ is the root $v_0$, the head of $e_i$ is the same as the tail of $e_{i+1}$, and the head of $e_k$ is $v$. 
\end{itemize}

From the above facts, we define a {\em distance function}.
\begin{definition}
	\label{def distance}
	Let $T$ be a tree and let $(v_0,e_0)$ be the chosen root.
	Then, there is a well-defined function 
	\[dist : V(T) \to \mathbb{Z}_{\geq 0}, \hs v \mapsto k,\]
	where $v$ is the number of edges in the unique sequence connecting $v$ and $v_0$ as descried above.
\end{definition}

Let $T$ be a tree, and let $(v_0,e_0)$ be the root of $T$. 
We would like to make $E_v(T)$ (resp.\ the set of boundary vertices) an ordered set, not just a cyclically ordered set. 
We note that it is enough to set the first or the last element of each of sets.  
Thus, the following will give an order on each sets:
\begin{itemize}
	\item If $v$ is the root vertex $v_0$, then we set the root edge $e_0$ as the {\em first} edge of $E_{v_0}(T)$. 
	\item If $v$ is not a root vertex $v_0$, then there is a unique edge $e \in E_v(T)$ such that the head of $e$ is $v$. 
	We set the unique $e$ as the {\em last} edge of $E_v(T)$. 
	\item The {\em first} element of the set of the boundary vertices is the boundary vertex that is connected to the root edges only using the {\em first edges}.
	More precisely, a boundary vertex $v$ is the first boundary vertex if there exists a finite sequence of edges $\{e_0, e_1, \dots, e_k\}$ such that $e_0$ is the root edge, $e_{i+1}$ is the first edge of $E_v(T)$ where $v$ is the head of $e_i$ for all $i = 0, \dots, k-1$, and the head of $e_k$ is the boundary vertex $v$. 
\end{itemize} 

Now, we define the {\em height} function.
The height function and the distance function (Definition \ref{def distance}) will be used in Definition \ref{def order of vertices} in order to define an order on $V(T)$.

For an arbitrary vertex $v \in V(T)$, there exists a unique finite sequence of edges $\{f_1, f_2, \dots, f_k\} \subset E(T)$ such that 
\begin{itemize}
	\item $f_1$ is the first edge of $E_v(T)$, 
	\item $f_{i+1}$ is the first edge of $E_{v_i}(T)$ where $v_i$ is the head of $f_i$, and
	\item the head of the last edge $f_k$ is a boundary vertex.
\end{itemize}
Let us assume that the head of $f_k$ be the $j^{th}$ element of the set of boundary vertices. 
Then, we define the height of $v$ as follows.
\begin{definition}
	\label{def height}
	The {\em height} of a vertex $v \in V(T)$ is $j \in \mathbb{N}$ that is the order of the unique boundary vertex in the above argument. 
	This defines a function 
	\[height : V(T) \to \mathbb{N}, v \mapsto j.\]
\end{definition}
We note that since the sequence $\{f_1, \dots, f_k\}$ in the above argument is unique, Definition \ref{def height} is well-defined. 

For a tree $T$, Definition \ref{def order of vertices} gives an order on $V(T)$.
\begin{definition}
	\label{def order of vertices} 
	\mbox{}
	\begin{enumerate}
		\item Let $v, w \in V(T)$. 
		Then, we say {\em $v < w$},  
		\begin{itemize}
			\item if $height(v) < height(w)$, or
			\item if $height(v) = height(w)$ and $dist(v) < dist(w)$.  
		\end{itemize} 
		For convenience, we label elements of
		\[V(T) = \{v_0, v_1, \dots, v_m\}\] 
		so that $v_i < v_j$ if $i <j$. 
		\item Let $e, f \in E(T)$.
		Then, we say {\em $e <f$} if the head of $e$ is less than the head of $f$. 
		We label 
		\[E(T) = \{e_0, \dots, e_{m-1}\},\]
		so that the head of $e_i$ is $v_{i+1}$. 
		We note that $m = |V(T)|$.
	\end{enumerate}
\end{definition}

The followings are obvious: 
\begin{itemize}
	\item the root vertex $v_0$ is the first element in $V(T)$, i.e., for any vertex $v \in V(T)$, $v_0 \leq v$, and
	\item $v_k$ is connected to $\{v_0, \dots, v_{k-1}\}$ by an edge $e_{k-1}$. 
\end{itemize}

We note that for a tree $T$ with a specific embedding $f_0: T \to \mathbb{R}^2$, there is another embedding $f:T \to \mathbb{R}^2$ such that 
\[f(v):= \left(dist(v), height(v)\right) \text{  for all  } v \in V(T).\]
As mentioned in Remark \ref{rmk equivalent embedded trees}, two embedding $f_0$ and $f$ will give the same orders on $V(T)$ and $E(T)$. 
In the rest of the paper, we assume that every tree $T$ admits an embedding satisfying
\[f(v):= \left(dist(v), height(v)\right).\]

We note that 
\[f(\text{the root vertex  }v_0) = (0,1) \text{  and  } f(\text{the root edge  } e_0) = \{(t,1) \in \mathbb{R}^2 | t \in [0,1]\}.\]
Thus, by giving the embedded image $f(T)$, one can specify the root. 

Figure \ref{figure example of embedded trees} gives two examples of different rooted and embedded trees such that their abstract trees, i.e., trees without roots and embeddings, are the same.

\begin{figure}[h]
	\centering
	\input{example_of_embedded_trees.txt}		
	\caption{Two different embedded trees are described in Figure \ref{figure example of embedded trees}. As trees, not embedded trees, they are the same tree which is the Dynkin diagram of $D_4$ type.} 
	\label{figure example of embedded trees}
\end{figure} 

We end the present Section by defining the followings for the later use.
\begin{definition}
	\label{def subtress}
	Let $T$ be a tree with ordered sets 
	\[V(T) = \{v_0, \dots, v_m\}, E(T) = \{e_0, \dots, e_{m-1}\}.\]
	\begin{enumerate}
		\item Let {\em $T^{(k)}$} be the subtree of $T$ consisting of 
		\[V(T^{(k)}) = \{v_0, \dots, v_k\}, E(T^{(k)}) = \{e_0, \dots, e_{k-1}\},\]
		for $1 \leq k \leq m$.
		\item Let {\em $\overline{T}$} be the tree obtained from $T$ by shrinking the first edges of $E_v(T)$, for all vertices $v \in V(T)$.
	\end{enumerate}
\end{definition}

An example of $\overline{T}$ is given in Figure \ref{figure example of overlineT}.
\begin{figure}[h]
	\centering
	\input{example_of_overlineT.txt}		
	\caption{The left is a rooted embedded tree $T$ and the right is the corresponding $\overline{T}$.} 
	\label{figure example of overlineT}
\end{figure}

\begin{remark}
	\label{rmk subtrees}
	\mbox{}
	\begin{enumerate}
		\item It is easy to check that 
		\[T^{(1)} \subset \dots \subset T^{(m)} = T.\]
		We will operate an induction on the sequence $\{T^{(k)}\}$ in order to prove Theorem \ref{thm rough statement3}.
		\item For any rooted embedded tree $T$, there is a natural quotient map $q : T \to \overline{T}$.
		We note that 
		\begin{itemize}
			\item $q$ sends a vertex $v \in V(T)$ to a vertex of $\overline{T}$,
			\item for any $v \in V(T)$, $q$ sends the first edge of $E_v(T)$ to a vertex $q(v) \in V\left(\overline{T}\right)$, 
			\item for the edges $e \in E(T)$ such that $e$ is not a first edge of $E_v(T)$ for any $v \in V(T)$, $q$ sends $e$ to an edge of $\overline{T}$.
		\end{itemize}
	\end{enumerate}
\end{remark}

\section{The algorithm for the plumbings along trees}
\label{section the algorithm for the plumbings along trees}
As defined in Definition \ref{def abstract Lefschetz fibration}, an abstract Lefschetz fibration consists of two things, a regular fiber, and a cyclically ordered collection of exact Lagrangian spheres in the fiber, i.e., the vanishing cycles. 
In the present section, we give a three-step algorithm producing an abstract Lefschetz fibration from an embedded, rooted tree $T$.
We will show that the plumbing space whose plumbing pattern is $T$ is the total space of the constructed Lefschetz fibration in Section \ref{section proof of plumbing tree theorem}. 

Let $T$ be a tree.  
In the first step, we construct the fiber.
The fiber $F$ is a plumbing of multiple copies of $T^*S^{n-1}$ along $\overline{T}$ defined in Definition \ref{def subtress}, (2).
The second step is to choose a vanishing cycle for each vertex of the input tree. 
The last step is to choose a vanishing cycles for some edges, but not all edges of the tree. 
We describe each of three steps abstractly below. 
Also, we will consider a specific example case given in Figure \ref{figure example} at the end of each steps.

\begin{figure}[h]
	\centering
	\input{example.txt}		
	\caption{An example tree $T$ is the right, and the corresponding $\overline{T}$ is the left.} 
	\label{figure example}
\end{figure}

\vskip0.2in

\noindent {\em Step 1. The fiber:}
Let $T$ be a given tree with 
\[V(T) = \{ v_0, v_1, \dots, v_m\}, E(T) = \{e_0, e_1, \dots, e_{m-1}\}.\]
We note that $V(T)$ and $E(T)$ are ordered sets, and the subscription above respects the order. 

First, we set a notation. 
\begin{definition}
	\label{def P_n(T)}
	Let {\em $P_n(T)$} denote the plumbing space of copies of $T^*S^n$, whose plumbing pattern is $T$.
\end{definition}

For a given $T$, and for $n \geq 2$, we set $P_{n-1}(\overline{T})$ as the fiber of the resulting abstract Lefschetz fibration. 

When we apply the first step to the example case given in Figure \ref{figure example}, the fiber should be $P_{n-1}(D_4)$ where $D_4$ means the Dynkin diagram of $D_4$ type. 
\vskip0.2in

\noindent {\em Step 2. Vanishing cycles corresponding to vertices:}
For each vertex $v_i \in V(T)$, we add one vanishing cycle.

We would like to note that the fiber $P_{n-1}(\overline{T})$ is a plumbing space. 
Thus, there is an exact Lagrangian sphere corresponding to each vertices of $\overline{T}$. 
For such Lagrangians, we set a notation. 

\begin{definition}
	\label{def L_v}
	For a tree $T$ and $v \in V(T)$, let {\em $L_v$} denote the Lagrangian sphere in $P_n(T)$ corresponding to the given vertex $v$.
\end{definition} 

For $v_i \in V(T)$, we choose the exact Lagrangian $L_{q(v_i)} \subset P_{n-1}(\overline{T})$. 
We note that the quotient map $q$ is defined in Remark \ref{rmk subtrees}.
Then, we have a cyclically ordered collection of exact Lagrangian spheres
\begin{gather}
	\label{eqn the first step abstract}
	\{L_{q(v_0)}, L_{q(v_1)}, \dots, L_{q(v_m)}\}.
\end{gather}
For the example case in Figure \ref{figure example}, we have 
\begin{gather}
	\label{eqn the first step}
	(P_{n-1}(D_4); L_{q(v_0)}, L_{q(v_1)}, L_{q(v_2)}, L_{q(v_3)}, L_{q(v_4)}, L_{q(v_5)}).	
\end{gather}

\begin{remark}
	We note that $L_{q(v_0)} = L_{q(v_1)} = L_{q(v_2)}$. 
	This gives some matching cycles of the resulting Lefschetz fibration.
	After finishing the algorithm, the matching cycles will correspond to $L_{v_1}, L_{v_2} \subset P_n(T)$. 
\end{remark}
\vskip0.2in

\noindent {\em Step 3. Vanishing cycles corresponding to edges:}
For each edge $e \in E(T)$ such that 
\begin{enumerate}
	\item[(i)] $e$ is not the first edge of $E_v(T)$ for the vertex $v \in V(T)$ such that $v$ is the tail of $e$, or
	\item[(ii)] $e$ is the root edge $e_0$,
\end{enumerate}
we add a vanishing cycle. 

We note that for adding a vanishing cycle, we need to choose two things, one is an exact Lagrangian in the fiber $P_{n-1}(\overline{T})$, and the other is the location where the vanishing cycle is in the cyclically ordered collection of vanishing cycles. 

In order to choose the exact Lagrangian, let $h \in V(T)$ denote the {\em head} of an edge $e$ satisfying either (i) or (ii). 
Then, the exact Lagrangian sphere corresponding to $e$ is $L_{q(h)} \in P_{n-1}(\overline{T})$.

In order to choose the location of the vanishing cycle $L_{q(h)}$ (corresponding to $e$), let us assume that $e$ satisfies (i).
Let the tail of $e$ be $v_i$, i.e., $(i+1)^{th}$ vertex in $V(T)$. 
Then, the vanishing cycle for $e$ will be located between the vanishing cycles corresponding to $v_{i-1}$ and $v_i$, i.e., $L_{q(v_{i-1})}$ and $L_{q(v_i)}$ in \eqref{eqn the first step abstract}.

We note that by the above procedure, one can add multiple vanishing cycles between $v_{i-1}$ and $v_i$. 
To explain that let $\{e_{i_1}, \dots, e_{i_k}\}$ be the set of edges such that their tails are $v_i \neq v_0$, and such that they satisfy (i).
In other words, 
\[\{e_{i_1}, \dots, e_{i_k}\} = E_{v_i} \setminus \left\{\text{the first edge starting at  } v_i\right\}.\]
Moreover, let $i_1 < i_2 < \dots < i_k$. 
Then, every vanishing cycle corresponding to one of the edges is located between $L_{q(v_{i-1})}$ and $L_{q(v_i)}$.  
And, we put the vanishing cycles in the {\em reverse} order, i.e., the vanishing cycle corresponding to $e_{i_k}$ comes first, the vanishing cycle corresponding to $e_{i_{k-1}}$ comes second, and so on.

Lastly, we should consider the edge satisfying (ii), i.e., the root edge $e_0$.
For the root edge $e_0$, we add the vanishing cycle $L_{q(v_1)}$ at the first position of the abstract Lefschetz fibration. 
This is the last step of the algorithm.

For our example, one can observe that we need to add vanishing cycles for edges $e_0, e_2, e_3, e_4$ because $e_2, e_3$ and $e_4$ satisfy (i), and $e_0$ satisfies (ii).
The corresponding vanishing cycles are $L_{q(v_1)}, L_{q(v_3)}, L_{q(v_4)}, L_{q(v_5)}$. 
The positions of $L_{q(v_3)}$ and $L_{q(v_4)}$ (corresponding to $e_2$ and $e_3$) are located between $L_{q(v_0)}$ and $L_{q(v_1)}$ since the tails of $e_2$ and $e_3$ are the same vertex $v_1$. 
Since $e_2 < e_3$, $L_{q(v_4)}$ comes earlier than $L_{q(v_3)}$. 

The position of $L_{q(v_5)}$ (corresponding to $e_4$) is located at the front of $L_{q(v_0)}$ in Equation \eqref{eqn the first step} since the tail of edge $e_4$ is $v_0$. 
After that, we add $L_{q(v_1)}$ corresponding to the root edge $e_0$, at the first of the collection of vanishing cycles. 
At the end, we construct the following abstract Lefschetz fibration: 
\[(P_{n-1}(D_4); L_{q(v_1)}, L_{q(v_5)}, L_{q(v_0)}, L_{q(v_4)}, L_{q(v_3)}, L_{q(v_1)}, L_{q(v_2)}, L_{q(v_3)}, L_{q(v_4)}, L_{q(v_5)}). \]

We set notation for the later use. 
\begin{definition}
	\label{def LF}
	Let $T$ be a tree. 
	We denote the {\em Lefschetz fibration obtained from $T$} in Section \ref{section the algorithm for the plumbings along trees} by {\em $LF(T)$}.
\end{definition}
\vskip0.2in

\noindent {\em Matching cycles:}
For a given tree $T$, let $LF(T)$ be the abstract Lefschetz fibration constructed by the above algorithm.
Let $E$ be the total space of $LF(T)$, and let $\pi: E \to \mathbb{C} = \mathbb{R}^2$ be a specific Lefschetz fibration.
  
Without loss of generality, one can assume that all the singular values of $\pi$ are contained in the unit circle in $\mathbb{R}^2 \simeq \mathbb{C}$.
Then, for any vertex $v \in V(T)$, there are the following matching cycles:
\begin{itemize}
	\item If $v$ is the root vertex, i.e., $v = v_0$, then on the base, we consider the straight line segment connecting two singular values corresponding to the vertex $v_0$ and the root edge $e_0$. 
	We note that the vanishing cycles corresponding to $v_0$ and $e_0$ are the same because both are $L_{q(v_0)} = L_{q(v_1)}$. 
	More precisely, by definition, $v_0$ and $v_1$ are connected by the root edge of $T$, and the root edge $e_0$ is shrunk in $\overline{T}$. 
	Thus, $q(v_0) = q(v_1)$.
	\item For $i \geq 1$, if $v_i$ is connected to the vertex $v_{i-1}$, or equivalently, the edge whose head is $v_i$ is the first edge of $E_{v_{i-1}}(T)$, then two singular values corresponding to vertices $v_{i-1}$ and $v_i$ have the same Lagrangian sphere as their vanishing cycles. 
	We consider the straight line segment connecting two singular values corresponding to the vertex $v_i$ and $v_{i-1}$, then the line segment becomes a matching cycle.
	\item For $i \geq 1$, if $v_i$ is `not' connected to the vertex $v_{i-1}$, then there is a singular value corresponding to the edge $e_{i-1}$ since $e_{i-1}$ satisfies (i) of the third step.  
	Then, the line segment connecting two singular values corresponding to the vertex $v_i$ and the edge $e_{i-1}$ becomes a matching cycle. 
\end{itemize} 

We note that in the above, we fix a matching cycle for each of vertices $v \in V(T)$. 
In other words, for each vertex $v \in V(T)$, we fix a Lagrangian sphere in $E$.
Using this, we define the following: 
\begin{definition}
	\label{def corresponding Lagrangian}
	Let $T$ be a tree and let $v \in V(T)$. 
	And, we use $E$ to denote the total space of the Lefschetz fibration obtained from $T$. 
	Then, the {\em Lagrangian sphere corresponding to $v$} is the Lagrangian sphere corresponding to the fixed matching cycle for $v \in V(T)$. 
	We denote the Lagrangian sphere corresponding to $v$ by $S_v$. 
\end{definition}
In Section \ref{section proof of plumbing tree theorem}, we will prove that $P_n(T) \simeq E$, and that $L_v \subset P_n(T)$, the zero section of $T^*S^n$ corresponding to $v$, will be identified with $S_v \subset E$ by the identification.

In other words, we prove Theorem \ref{thm plumbing along tree}.
\begin{theorem}[=Theorem \ref{thm rough statement3}]
	\label{thm plumbing along tree}
	Let $T$ be an abstract tree.
	For any embedding of $T$ into $\mathbb{R}^2$, and for any root of $T$, the algorithm given in Section \ref{section the algorithm for the plumbings along trees} produces an abstract Lefschetz fibration whose total space is equivalent to $P_n(T)$ up to symplectic completion.
	Moreover, the Lagrangian sphere $L_v \in P_n(T)$ for any $v \in V(T)$ is Hamiltonian equivalent to $S_v$ above.
\end{theorem}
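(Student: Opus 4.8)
The plan is to argue by induction on the number of vertices, following the sketch of Section~\ref{section sketch of the proof of theorem} and using the filtration $T^{(1)} \subset \cdots \subset T^{(m)} = T$ of Definition~\ref{def subtress} and Remark~\ref{rmk subtrees}. The inductive hypothesis at stage $k$ will be the \emph{full} conclusion of Theorem~\ref{thm plumbing along tree} for the rooted embedded subtree $T^{(k)}$: that the total space of the abstract Lefschetz fibration $LF(T^{(k)})$ produced by the algorithm is equivalent to $P_n(T^{(k)})$, \emph{and} that for every $v \in V(T^{(k)})$ the sphere $L_v$ is Hamiltonian isotopic to the matching cycle related to $v$. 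Carrying the matching-cycle statement along is essential rather than a bonus: it is exactly what licenses the next plumbing to be performed combinatorially on the base of the fibration. For the base case $T^{(1)}$ (a single root edge $e_0$) one has $\overline{T^{(1)}}$ a point, so the algorithm returns $(T^*S^{n-1}; L_{q(v_1)}, L_{q(v_0)}, L_{q(v_1)})$ with all three vanishing cycles equal to the zero section; this is the standard Lefschetz fibration of the $A_2$ Milnor fiber, which is $P_n(T^{(1)})$, and the two matching cycles cut out by consecutive singular values are $L_{v_0}$ and $L_{v_1}$.

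For the inductive step, write $T^{(k+1)} = T^{(k)} \cup \{v_{k+1}, e_k\}$, where $e_k$ has head $v_{k+1}$ and tail $v_j$ for some $j \le k$ (Definition~\ref{def order of edges}). By the definition of the plumbing, $P_n(T^{(k+1)})$ is obtained from $P_n(T^{(k)})$ by plumbing a new copy of $T^*S^n$ at one point of the sphere $L_{v_j}$. By the inductive hypothesis $L_{v_j}$ is Hamiltonian isotopic to the matching cycle related to $v_j$, hence to a sphere fibering over an embedded path $\gamma$ in the base of $LF(T^{(k)})$ joining two singular values whose common vanishing cycle is $V := L_{q(v_j)}$ in the fiber $P_{n-1}(\overline{T^{(k)}})$. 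I would then run the argument of Theorem~\ref{thm plumbing} in this relative setting, with $P_n(T^{(k)})$ playing the role of $T^*M_1$ and the new $T^*S^n$ the role of $T^*M_2$, plumbed not along the base zero section but along the matching cycle $L_{v_j}$.

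The heart of the step is to turn this plumbing into an explicit modification of the abstract Lefschetz fibration and to match it against the algorithm's output. The model is: plumbing $T^*S^n$ onto a matching-cycle sphere inserts a new singular value on the path $\gamma$, so that $L_{v_j}$ and the new zero section $L_{v_{k+1}}$ become matching cycles over the two sub-arcs of $\gamma$ and meet transversally once at the new critical point, reproducing the local $A_2$ plumbing. Here the dichotomy of Step~3 appears: if $e_k$ is the first edge of $E_{v_j}(T)$ then $\overline{T}$ is unchanged under the shrinking of Definition~\ref{def subtress}, the fiber is unchanged, $q(v_{k+1}) = q(v_j)$, and a single cycle $L_{q(v_j)}$ is inserted; if $e_k$ is not the first edge then $\overline{T^{(k+1)}}$ acquires a new vertex, so the fiber must be enlarged by plumbing a $T^*S^{n-1}$ onto $V$ (the dimension-shifted plumbing), and two cycles $L_{q(v_{k+1})}$ are inserted, one from the vertex $v_{k+1}$ and one from the edge $e_k$. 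In either case I would invoke Section~\ref{subsect the zero sections} to see that the Lagrangian skeleton sits inside the critical-handle region, so the enlarged fibration has total space equivalent to the plumbing; I would then verify, using the orders on $V(T)$ and $E(T)$ (Definitions~\ref{def order of vertices} and~\ref{def order of edges}) together with the reverse ordering prescribed for the edge cycles, that the inserted cycles land in exactly the slots dictated by the algorithm, so that the modified fibration is literally $LF(T^{(k+1)})$. Finally I would check the matching-cycle conclusion: $L_{v_{k+1}}$ is the matching cycle related to $v_{k+1}$ by construction, and each old $L_{v_i}$ remains Hamiltonian isotopic to its prescribed matching cycle because the plumbing is supported near $L_{v_j}$ and the insertion of one or two singular values only transports the earlier matching paths by an ambient isotopy.

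The step I expect to be the main obstacle is precisely this last comparison: controlling the fiber stabilization in the non-first-edge case and proving that inserting new singular values preserves, up to Hamiltonian isotopy, all previously established matching cycles while placing the new cycles in the positions fixed by the combinatorial order. The fiber enlargement changes the ambient manifold in which all earlier vanishing cycles live, so one must argue that the earlier Lefschetz data embeds compatibly (as in the $F_1 \cap F_2 = D^*S^{n-1}$ decomposition of Theorem~\ref{thm plumbing}) and that the matching-cycle relation of Section~\ref{section the algorithm for the plumbings along trees}, with its reverse orderings at each branch vertex, is exactly the bookkeeping produced by iterating the single plumbing move. Everything else — the local $A_2$ model, the equivalence up to completion, and the base case — is routine given Theorem~\ref{thm plumbing} and Section~\ref{subsect the zero sections}.
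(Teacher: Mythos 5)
Your proposal is correct and follows essentially the same route as the paper's proof in Section \ref{section proof of plumbing tree theorem}: induction over the filtration $T^{(1)} \subset \cdots \subset T^{(m)}$ with the matching-cycle statement carried in the inductive hypothesis, the $A_2$ base case, the dichotomy according to whether $e_k$ is the first edge at its tail, and in each case a decomposition of the total space into two sub-fibrations ($W_1$ equivalent to $P_n(T^{(k)})$ --- via a stabilization in the non-first-edge case --- and $W_2$ equivalent to $T^*S^n$, intersecting in a ball) exactly as in the proof of Theorem \ref{thm plumbing}. One small point in your favor: your base case $(T^*S^{n-1}; L_{q(v_1)}, L_{q(v_0)}, L_{q(v_1)})$ with three zero-section vanishing cycles is the faithful output of the algorithm and is the standard fibration of the $A_2$ plumbing, whereas the paper's prose at the base step asserts only two vanishing cycles, which would instead give $T^*S^n$.
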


\begin{remark}
	\label{rmk different lefschetz fibration}
	We point out that the plumbing space $P_n(T)$ depends only on the abstract tree $T$. 
	Since the input of the algorithm is a rooted, embedded tree, we can observe that by choosing different roots and embeddings of $T$, one can produce different Lefschetz fibrations on the same Weinstein manifold $P_n(T)$. 
\end{remark}

\section{Proof of Theorem \ref{thm plumbing along tree}}
\label{section proof of plumbing tree theorem}
Let $T$ be a given tree with ordered sets
\[V(T) = \{ v_0, v_1, \dots, v_m\}, E(T) = \{e_0, e_1, \dots, e_{m-1}\}.\]

As mentioned in Section \ref{section sketch of the proof of theorem}, we prove Theorem \ref{thm plumbing along tree} by induction on the following increasing sequence
\[T^{(1)} \subset \dots \subset T^{(m)}.\]
We note that the increasing sequence is defined in Definition \ref{def subtress}. 
The induction hypothesis is the following:
\begin{itemize}
	\item For $T^{(k)}$, the total space of the Lefschetz fibration obtained from $T^{(k)}$, $LF(T^{(k)})$, is $P_n(T^{(k)})$, and by the identification between the total space and $P_n(T^{(k)})$, $S_v$ and $L_v$ are identified for each $v \in V(T^{(k)})$. 
\end{itemize}
\vskip0.2in

\noindent{\em The base step.}
For the base case, we operate the algorithm for $T^{(1)}$. 
By definition, $T^{(1)}$ is a tree with two vertices $v_0$ and $v_1$ connected by an edge $e_0$, i.e., the Dynkin diagram of $A_2$-type. 

When one operates the algorithm for $T^{(1)}$, one obtains 
\[LF(T^{(1)}) = (T^*S^{n-1}: S^{n-1}, S^{n-1}, S^{n-1}),\]
where $S^{n-1}$ denotes the zero section of $T^*S^{n-1}$.
It is well-known that the total space of $LF(T^{(1)})$ is the Milnor fiber of $A_2$-type, i.e., $P_n(T^{(1)})$.
Also, it is well-known that the matching cycle condition also holds for the given Lefschetz fibration.
This completes the proof of the base case.

For the inductive step, let us assume that $LF(T^{(k)})$ satisfies the induction hypothesis.  
We note that by definition, $T^{(k+1)}$ is obtained by adding a vertex $v_{k+1}$ and an edge $e_k$ to $T^{(k)}$. 
Thus, the difference between $LF(T^{(k)})$ and $LF(T^{(k+1)})$ occurs by $v_{k+1}$ and $e_k$. 

One can observe that there are two cases.
The first (resp.\ second) case is that $e_k$ is (resp.\ is not) the first edge of $E_v(T)$ if $v$ denotes the tail of $e_k$.
We discuss those two cases separately.  
\vskip0.2in

\noindent {\em The first case of the induction step.}
Before starting the proof for the first case, let us point out the difference between $LF(T^{(k)})$ and $LF(T^{k+1})$.
For the first case, it is easy to check that the added vertex $v_{k+1}$ is connected to $v_k$, and it induces the following two facts:
\begin{itemize}
	\item We note that $LF(T^{(k)})$ and $LF(T^{k+1})$ have the same fiber $P_{n-1}(\overline{T^{(k)}}) = P_{n-1}(\overline{T^{(k+1)}})$.
	\item $LF(T^{(k+1)})$ is obtained by adding one vanishing cycle 
	\[L_{q(v_{k}) = q(v_{k+1})},\]
	at the end of the ordered collection of vanishing cycles. 
\end{itemize}

Now, we compare the plumbing spaces $P_n(T^{(k)})$ and $P_n(T^{(k+1)})$. 
The later is obtained by plumbing a $T^*S^n$ to the former. 
The new plumbing point is the intersection point between $L_{v_{k+1}}$ and $L_{v_k}$. 

One can understand the plumbing procedure as attaching a new critical Weinstein handle. 
To attaching the critical handle, we need an attaching Legendrian. 
The corresponding attaching Legendrian could be seen as the asymptotic boundary Legendrian sphere of $D^*_pL_{v_k} \subset P_n(T^{(k)})$.
We would like to find the Lagrangian disk under the induction hypothesis.

From the induction hypothesis, one can identify the $P_n(T^{(k)})$ and the total space of $LF(T^{(k)})$. 
Based on the identification, one can see that the Lefschetz thimble of the singular value corresponding to $v_k$ corresponds to a Lagrangian disk.
Moreover, the Lagrangian disk transversely intersects with $S_{v_k} \simeq L_{v_k}$. 
And, from the Lefschetz type critical points condition (see Definition \ref{def Lefschetz fibration}), one can see the Lefschetz thimble as the cotangent fiber we would like to find, i.e., $D^*_pL_{v_k}$. 
See Figure \ref{figure example1}.

From the above argument, one can get the Lefschetz fibration by attaching a critical handle along the boundary of the Lefschetz thimble. 
Then, by the construction of $LF(T^{(k+1)})$, the resulting Lefschetz fibration is $LF(T^{(k+1)})$. 
Moreover, the construction of $LF(T^{(k+1)})$ explains that $L_{v_{k+1}}$ and $S_{v_{k+1}}$ are identified. 

\begin{figure}[h]
	\centering
\begingroup%
  \makeatletter%
  \providecommand\color[2][]{%
    \errmessage{(Inkscape) Color is used for the text in Inkscape, but the package 'color.sty' is not loaded}%
    \renewcommand\color[2][]{}%
  }%
  \providecommand\transparent[1]{%
    \errmessage{(Inkscape) Transparency is used (non-zero) for the text in Inkscape, but the package 'transparent.sty' is not loaded}%
    \renewcommand\transparent[1]{}%
  }%
  \providecommand\rotatebox[2]{#2}%
  \newcommand*\fsize{\dimexpr\f@size pt\relax}%
  \newcommand*\lineheight[1]{\fontsize{\fsize}{#1\fsize}\selectfont}%
  \ifx\svgwidth\undefined%
    \setlength{\unitlength}{283.46456693bp}%
    \ifx\svgscale\undefined%
      \relax%
    \else%
      \setlength{\unitlength}{\unitlength * \real{\svgscale}}%
    \fi%
  \else%
    \setlength{\unitlength}{\svgwidth}%
  \fi%
  \global\let\svgwidth\undefined%
  \global\let\svgscale\undefined%
  \makeatother%
  \begin{picture}(1,0.48)%
    \lineheight{1}%
    \setlength\tabcolsep{0pt}%
    \put(0,0){\includegraphics[width=\unitlength,page=1]{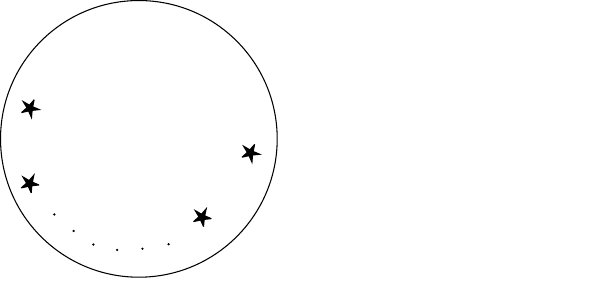}}%
    \put(0.07766871,0.27998571){\makebox(0,0)[lt]{\lineheight{1.25}\smash{\begin{tabular}[t]{l}$e_0$\end{tabular}}}}%
    \put(0.36943279,0.21264093){\makebox(0,0)[lt]{\lineheight{1.25}\smash{\begin{tabular}[t]{l}$v_{k}$\end{tabular}}}}%
    \put(0,0){\includegraphics[width=\unitlength,page=2]{example1.pdf}}%
    \put(0.58715034,0.27998569){\makebox(0,0)[lt]{\lineheight{1.25}\smash{\begin{tabular}[t]{l}$e_0$\end{tabular}}}}%
    \put(0.78784492,0.38767369){\makebox(0,0)[lt]{\lineheight{1.25}\smash{\begin{tabular}[t]{l}$v_{k+1}$\end{tabular}}}}%
    \put(0.87891441,0.21264091){\makebox(0,0)[lt]{\lineheight{1.25}\smash{\begin{tabular}[t]{l}$v_{k}$\end{tabular}}}}%
    \put(0,0){\includegraphics[width=\unitlength,page=3]{example1.pdf}}%
  \end{picture}%
\endgroup%
		
	\caption{The left is the base of $LF(T^{(k)})$ together with a Lefschetz thimble. The right is the base of $LF(T^{(k+1)})$. On the right picture, the red interval is the matching cycle for $S_{v_{k+1}} \simeq L_{v_{k+1}}$. The red circle part corresponds to $P_n(T^{(k)})$ that is a subset of $P_n(T^{(k+1)})$.} 
	\label{figure example1}
\end{figure}
\vskip0.2in

\noindent {\em The second case of the induction step.}
For the second case, we observe that $v_{k+1}$ is connected to $v_j$ such that $0 \leq j < k-1$ by $e_k$.
When one compares the fibers of $LF(T^{(k)})$ and $LF(T^{(k+1)})$, one can observe the following two differences:
\begin{itemize}
	\item First, the fiber of $LF(T^{(k+1)})$, or equivalently $P_{n-1}(\overline{T^{(k+1)}})$, is obtained by plumbing $T^*S^{n-1}$ and the fiber of $LF(T^{(k)})$. 
	Especially, we note that $P_{n-1}(\overline{T^{(k)}}) \subsetneq P_{n-1}(\overline{T^{(k+1)}})$.
	\item Second, $LF(T^{(k+1)})$ has two more singular values than $LF(T^{(k)})$. 
	These two singular values correspond to $v_{k+1}$ and $e_k$. Since the head of $e_k$ is $v_{k+1}$, they have the same vanishing cycles. The same vanishing cycles are the zero section of $T^*S^{n-1}$ in the first item. Or equivalently, the vanishing cycles are $L_{q(v_{k+1})} \subset P_{n-1}(\overline{T^{(k+1)}})$.
\end{itemize}

As same as the first case, $P_n(T^{(k+1)})$ is obtained by plumbing one $T^*S^n$ to $P_n(T^{(k)})$, or equivalently, attaching an extra critical Weinstein handle along the asymptotic boundary of a cotangent fiber $D^*_p L_{v_j}$.
Thus, our strategy is to find a Lefschetz thimble corresponding to $T^*_pL_{v_j}$, as we did above.

To do that, under the induction hypothesis, we consider a {\em stabilization} of $LF(T^{(k)})$. 
See Section \ref{section the effects of handle moves}. 
A stabilization of a Lefschetz fibration can be seen as adding a canceling pair of index $(n-1,n)$ handles, and it changes both of the fiber and the collection of singular values. 
For the fiber, we need to attach a critical Weinstein handle, i.e., index $(n-1)$ Weinstein handle of dimension $2(n-1)$, to the original fiber $P_{n-1}(\overline{T^{(k)}})$. 
And as the new fiber, we obtain $P_{n-1}(\overline{T^{k+1})})$. 
It corresponds to attach the index $(n-1)$ handle in the added canceling pair.

Moreover, we need to add one more singular value and it corresponds to attach the critical handle in the canceling pair. 
We note that the vanishing cycle of the new singular value should be $L_{q(v_k)}$ because of the definition of stabilization. 
The location of the added singular value could be anywhere, but we choose the place of the singular value for $e_k$ in $LF(T^{(k+1)})$. 
See the left one picture of Figure \ref{figure example2}.

\begin{figure}[h]
	\centering
	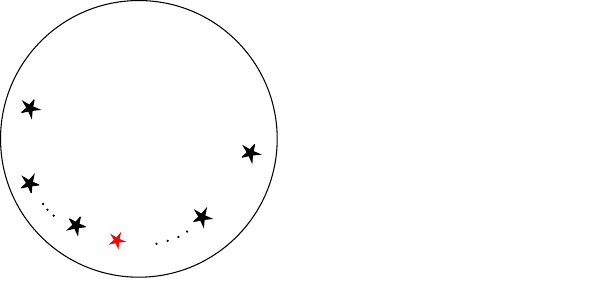		
	\caption{The left is the picture of the base of $LF(T^{(k)})$ after the stabilization. The added singular value is the red star marked point, and the red line segment corresponds to the Lefschetz thimble. The right is the picture of $LF(T^{(k+1)})$. The red circle part corresponds to $P_n(T^{(k)})$ that is a subset of $P_n(T^{(k+1)})$.}  
	\label{figure example2}
\end{figure} 

Now similar to the first case, we consider a Lefschetz thimble ending at the added singular value. 
Then, it intersects the matching cycle corresponding to $S_{v_j}$ at a regular point. 
See Figure \ref{figure example2}.
Moreover, since the matching cycle and the Lefschetz thimble have different vanishing cycles, and since their vanishing cycles intersect at one point transversally, the Lefschetz thimble transversally intersects $S_{v_j}$.
Thus, one can see the Lefschetz thimble as the Lagrangian disk we would like to find. 

Finally, same as the first case, we can attach a critical handle along the boundary of the Lefschetz thimble. 
Then, it gives a new Lefschetz fibration, and the new Lefschetz fibration is $LF(T^{(k+1)})$. 
In other words, one can see that the total space of $LF(T^{(k+1)})$ and $P_n(T^{(k+1)})$ are the same Weinstein manifold.
Moreover, the matching cycle connecting two singular values corresponding to $e_k$ and $v_{k+1}$ should be the zero section of added $T^*S^n$, i.e., $L_{v_{k+1}}$. 
It completes the proof of inductive step.
\qed

\section{An application}
\label{section application} 
In this section, we introduce an application of Theorem \ref{thm plumbing along tree}. 
The application is to construct diffeomorphic families of Weinstein manifolds.
Weinstein manifolds which we are considering are plumbings of multiple copies of $T^*S^n$ along trees $T_m^j$ that are defined in Definition \ref{def T_m^j}.

\begin{definition}
	\label{def T_m^j}
	For any $m \in \mathbb{N}$ and any $ 1 \leq j \leq m$, let {\em $T_m^j$} denote the tree which is given in Figure \ref{figure T_m^j}. 
	\begin{figure}[h]
		\centering
\begingroup%
  \makeatletter%
  \providecommand\color[2][]{%
    \errmessage{(Inkscape) Color is used for the text in Inkscape, but the package 'color.sty' is not loaded}%
    \renewcommand\color[2][]{}%
  }%
  \providecommand\transparent[1]{%
    \errmessage{(Inkscape) Transparency is used (non-zero) for the text in Inkscape, but the package 'transparent.sty' is not loaded}%
    \renewcommand\transparent[1]{}%
  }%
  \providecommand\rotatebox[2]{#2}%
  \newcommand*\fsize{\dimexpr\f@size pt\relax}%
  \newcommand*\lineheight[1]{\fontsize{\fsize}{#1\fsize}\selectfont}%
  \ifx\svgwidth\undefined%
    \setlength{\unitlength}{226.77165354bp}%
    \ifx\svgscale\undefined%
      \relax%
    \else%
      \setlength{\unitlength}{\unitlength * \real{\svgscale}}%
    \fi%
  \else%
    \setlength{\unitlength}{\svgwidth}%
  \fi%
  \global\let\svgwidth\undefined%
  \global\let\svgscale\undefined%
  \makeatother%
  \begin{picture}(1,0.225)%
    \lineheight{1}%
    \setlength\tabcolsep{0pt}%
    \put(0,0){\includegraphics[width=\unitlength,page=1]{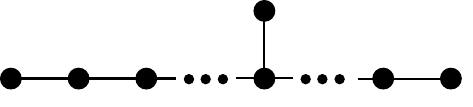}}%
    \put(0.01285024,0.00623575){\makebox(0,0)[lt]{\lineheight{1.25}\smash{\begin{tabular}[t]{l}$v_1$\end{tabular}}}}%
    \put(0.1554882,0.00623723){\makebox(0,0)[lt]{\lineheight{1.25}\smash{\begin{tabular}[t]{l}$v_2$\end{tabular}}}}%
    \put(0.29928831,0.00623723){\makebox(0,0)[lt]{\lineheight{1.25}\smash{\begin{tabular}[t]{l}$v_3$\end{tabular}}}}%
    \put(0.54762176,0.00623723){\makebox(0,0)[lt]{\lineheight{1.25}\smash{\begin{tabular}[t]{l}$v_j$\end{tabular}}}}%
    \put(0.78614463,0.00623723){\makebox(0,0)[lt]{\lineheight{1.25}\smash{\begin{tabular}[t]{l}$v_{m-1}$\end{tabular}}}}%
    \put(0.94503508,0.00623723){\makebox(0,0)[lt]{\lineheight{1.25}\smash{\begin{tabular}[t]{l}$v_m$\end{tabular}}}}%
    \put(0.58697062,0.18741217){\makebox(0,0)[lt]{\lineheight{1.25}\smash{\begin{tabular}[t]{l}$v_{m+1}$\end{tabular}}}}%
  \end{picture}%
\endgroup%
		
		\caption{Tree $T_m^j$.} 
		\label{figure T_m^j}
	\end{figure} 
\end{definition} 

Corollary \ref{cor diffeomorphic family} can be easily obtained from Theorem \ref{thm plumbing along tree} and arguments in \cite{Maydanskiy, MS, Maydanskiy-Seidel15}.
\begin{corollary}
	\label{cor diffeomorphic family}
	For odd $n \geq 5$ (resp.\ $n =3$), $m \in \mathbb{N}$, $P_n(T_m^j)$ and $P_n(T_m^{j+4})$ (resp.\ $P_n(T_m^{j+2})$) are diffeomorphic.
\end{corollary}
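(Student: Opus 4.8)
The plan is to feed both plumbing spaces through Theorem \ref{thm plumbing along tree} and then compare the resulting abstract Lefschetz fibrations not up to symplectomorphism but only up to diffeomorphism of total spaces. Fix an embedding and a root of the caterpillar tree $T_m^j$ (the chain $v_1 - \cdots - v_m$ with one extra leaf $v_{m+1}$ attached at $v_j$), and let $LF(T_m^j)$ be the abstract Lefschetz fibration produced by the algorithm of Section \ref{section the algorithm for the plumbings along trees}, whose total space is $P_n(T_m^j)$ by Theorem \ref{thm plumbing along tree}. The underlying smooth manifold of the total space of an abstract Lefschetz fibration $(F; L_1, \cdots, L_r)$ is obtained from $F \times \mathbb{D}^2$ by attaching $n$-handles along the framed spheres determined by the lifted Legendrians of the $L_i$; hence its diffeomorphism type depends only on the smooth manifold $F$ and on the smooth framed isotopy classes of these attaching spheres in their cyclic order. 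This is far coarser than the symplectic data, which is exactly why fibrations with symplectically inequivalent vanishing cycles can still share a smooth total space.

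Before the comparison I would record the trivial symmetry $P_n(T_m^j) \cong P_n(T_m^{m+1-j})$, valid because reflecting the chain is an isomorphism of abstract trees and $P_n$ depends only on the abstract tree. This is what turns the periodicity statement into the quoted example: for odd $n \geq 3$ one has $A_{4k+3} = P_n(T_{4k+2}^{1})$ and $D_{4k+3} = P_n(T_{4k+2}^{2})$, and the periodicity in $j$ (shift by $4$, resp.\ by $2$) together with the reflection $T_{4k+2}^{4k+1} \cong T_{4k+2}^{2}$ connects $j=1$ to $j=2$.

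Next I would compare the combinatorial data of $LF(T_m^j)$ and $LF(T_m^{j+d})$ directly, where $d = 4$ for odd $n \geq 5$ and $d = 2$ for $n = 3$. Reading off the algorithm of Section \ref{section the algorithm for the plumbings along trees}, the two ordered collections of vanishing cycles use the same Lagrangian spheres in a common fiber and differ only in the placement of the cycles attached to the leaf $v_{m+1}$ and to the edge feeding it. Using Hurwitz moves and cyclic permutations (Section \ref{subsect four moves}), which leave the symplectic total space unchanged, I would bring the collection of $LF(T_m^{j+d})$ into a normal form agreeing with that of $LF(T_m^j)$ except that one distinguished vanishing cycle has been hit by $\tau^{\pm d}$, the $d$-th power of a Dehn twist along the Lagrangian sphere attached to the chain vertex the leaf has slid across; each unit slide $j \rightsquigarrow j+1$ contributes a single such Dehn twist. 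The decisive input, supplied by \cite{Maydanskiy}, \cite{MS} and \cite{Maydanskiy-Seidel15}, is that although $\tau$ has infinite symplectic order, its underlying diffeomorphism has finite smooth order: the Dehn twist along a Lagrangian $S^{n-1}$ in a $(2n-2)$-dimensional Weinstein manifold satisfies $\tau^{4} \simeq \mathrm{id}$ smoothly when $n \geq 5$ is odd, and $\tau^{2} \simeq \mathrm{id}$ smoothly when $n = 3$, the smaller order at $n=3$ reflecting the low-dimensional coincidence for $S^2$. Hence the two families of framed attaching spheres are smoothly isotopic, the handle attachments coincide up to smooth isotopy, and $P_n(T_m^j)$ and $P_n(T_m^{j+d})$ are diffeomorphic.

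The main obstacle is the bookkeeping in the comparison step. One must verify that the fibers $P_{n-1}(\overline{T_m^j})$ and $P_{n-1}(\overline{T_m^{j+d}})$ are identified, at worst after a Weinstein deformation (a \emph{Deformation} move), so that the vanishing cycles genuinely live in a common fiber, and then track precisely that sliding the leaf by $d$ steps accumulates exactly $\tau^{\pm d}$ on a single cycle rather than a more complicated word in several distinct Dehn twists. Pinning down the exact smooth order of $\tau$ — the computation in $\pi_{n-1}(SO(n))$ yielding $4$ for even $n-1 \geq 4$ and $2$ for $n-1 = 2$ — is the quantitative heart of the argument, and it is exactly this that the cited references provide.
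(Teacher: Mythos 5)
Your overall route is the paper's route: run the algorithm of Theorem \ref{thm plumbing along tree} on both trees, observe that the two abstract Lefschetz fibrations share the fiber and vanishing-cycle set and differ only in the position of the cycle coming from the leaf $v_{m+1}$, slide that cycle by Hurwitz moves so that the two collections agree except that one entry is replaced by $\tau_\alpha^{d}(\beta)$, and then invoke \cite{MS}, \cite{Maydanskiy-Seidel15} for the smooth comparison. (Two small simplifications: $\overline{T_m^j}$ is the $A_2$ tree for every $j$, so the fibers are literally the same $P_{n-1}(A_2)$ and no Deformation move is needed to identify them; and your bookkeeping claim that each unit shift of $j$ costs exactly one twist $\tau_\alpha$ is correct, since all chain vertices map to the single sphere $\alpha$ under $q$.)

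The genuine gap is in what you call the decisive input. You assert that the compactly supported \emph{diffeomorphism} $\tau_\alpha^{4}$ is smoothly isotopic to the identity when $n\geq 5$ is odd, and you attribute this to the cited references. That statement is much stronger than what those references prove, and it is not available: the smooth isotopy class of powers of a Dehn twist along an even-dimensional sphere $S^{n-1}$, $n-1\geq 4$, is a delicate question about $\pi_0\mathrm{Diff}_c(T^*S^{n-1})$ tied to exotic spheres, and only the $S^{2}$ case ($n=3$, Seidel) is classical. What \cite{MS} and \cite{Maydanskiy-Seidel15} actually establish --- and all that is needed, and what the paper uses --- is a statement about the image of the \emph{single} sphere: the Legendrian lift of $\tau_\alpha^{4}(\beta)$ is smoothly isotopic to that of $\beta$ together with the canonical formal Legendrian (framing) data, a computation of the framing change in $\pi_{n-1}(SO(n))$, whose relevant element has order dividing $2$ when $n-1$ is even. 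A framed sphere can be framed-isotopic to its image under $\tau^{4}$ without $\tau^{4}$ being isotopic to the identity, so you should replace your claim by this weaker one; with that substitution your argument closes exactly as the paper's does. For $n=3$ your alternative justification (smooth triviality of the $S^2$-twist squared) is in fact valid, though the paper instead argues via \cite[Proposition A.4]{Murphy}, i.e., uniqueness of formal Legendrian structures on $S^{2}\subset\mathbb{R}^{5}$.
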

\begin{proof}
	We prove Corollary \ref{cor diffeomorphic family} for the case of odd $n \geq 5$, and the same argument will work for the case of $n=3$.
	 
	We apply Theorem \ref{thm plumbing along tree}, then it gives us an abstract Lefschetz fibration whose total space is $P_n(T_m^j)$.
	For any $m$ and $j$, the resulting Lefschetz fibrations have the same fiber $P_{n-1}(A_2)$, where $A_2$ means the Dynkin diagram of $A_2$-type.
	
	We note that $P_{n-1}(A_2)$ is a plumbing of two $T^*S^{n-1}$.
	Thus, there are two exact Lagrangian spheres corresponding to the zero sections of $T^*S^{n-1}$. 
	Let $\alpha$ (resp.\ $\beta$) denote the Lagrangian spheres $L_{q(v_1)} = \dots = L_{q(v_m)}$ (resp.\ $L_{q(v_{m+1})}$).
	We note that $v_i$ is a vertex in Figure \ref{figure T_m^j}, $q(v_i)$ is a vertex of $\overline{T^j_m}$, and $L_{q(v_i)}$ is defined in Definition \ref{def L_v}. 
	Then, the Lefschetz fibrations for $P_n(T_m^j)$ and $P_n(T_m^{j+4})$ are 
	\begin{gather}
		\label{eqn Lef T_m^j}
		(P_{n-1}(A_2); \alpha, \alpha, \dots, \alpha = L_{q(v_{j-1})}, \beta, \alpha= L_{q(v_j)}, \dots, \alpha = L_{q(v_m)}, \beta = L_{q(v_{m+1})}),\\
		\label{eqn Lef T_m^{j+2}}
		(P_{n-1}(A_2); \alpha, \alpha, \dots, \alpha = L_{q(v_{j+3})}, \beta, \alpha= L_{q(v_{j+4})}, \dots, \alpha = L_{q(v_m)}, \beta = L_{q(v_{m+1})}).
	\end{gather}
	
	We note that the middle $\beta$ in the Lefschetz fibration for $P_n(T_m^j)$ (resp.\ $P_n(T_m^{j+4})$) is located at $(j+1)^{th}$ (resp.\ $(j+5)^{th}$) position in the collection of vanishing cycles. 
	By taking the Hurwitz move, one can move the middle $\beta$ in Equation \eqref{eqn Lef T_m^j} to right.
	When we operate the Hurwitz move {\em four times}, then the vanishing cycle becomes $(\tau_\alpha)^4(\beta)$, and it is located at the $(j+5)^{th}$ position, where $\tau_\alpha$ denotes a Dehn twist along $\alpha$ on $P_{n-1}(A_2)$. 
	In other words, we have the following Lefschetz fibration for $P_n(T_m^j)$. 
	\begin{gather}
		\label{eqn Lef T_m^j modi}
		(P_{n-1}(A_2); \alpha, \alpha, \dots, \alpha = L_{q(v_{j+3})}, (\tau_\alpha)^4(\beta), \alpha= L_{q(v_j+4)}, \dots, \alpha = L_{q(v_m)}, \beta = L_{q(v_{m+1})}).
	\end{gather}

	One can observe that Equations \eqref{eqn Lef T_m^j modi} and \eqref{eqn Lef T_m^{j+2}} are the same except the $(j+5)^{th}$ vanishing cycles.
	The vanishing cycles are $\beta$ in \eqref{eqn Lef T_m^j modi} and $(\tau_\alpha)^4(\beta)$ in \eqref{eqn Lef T_m^{j+2}}.
	
	We recall that an abstract Lefschetz fibration gives a Weinstein handle decomposition of its total space. 
	Since we are interested in the smooth structure of $P_n(T^j_m)$ and $P_n(T^{j+4}_m)$, it is enough to show that $\beta$ and $(\tau^4)(\beta)$ induce Legendrian spheres in $\partial_\infty \left(P_{n-1}(A_2) \times \mathbb{C}\right)$ satisfying 
	\begin{itemize}
		\item two Legendrian spheres are isotopic as smooth spheres, and 
		\item the conformal symplectic normal bundles of two Legendrian spheres give the same framing under the isotopy connecting them.
	\end{itemize}
	
	Since \cite{MS, Maydanskiy-Seidel15} prove the above, it completes the proof of the case of odd $n \geq 5$. 
	
	For the case of $n =3$, it is simpler since the formal Legendrian structures on Legendrian sphere are unique in $\mathbb{R}^5$ as stated in \cite[Proposition A.4]{Murphy}.
\end{proof}

\begin{remark}
	\label{rmk exotic}
	\mbox{}
	\begin{enumerate}
		\item Corollary \ref{cor diffeomorphic family} gives diffeomorphic families, not just pairs. 
		For example, if $m =4k+2$ and if $n \geq 5$ is odd, then Corollary \ref{cor diffeomorphic family} gives the following diffeomorphic families of Weinstein manifolds 
		\[\{P_n(T_{4k+2}^1), P_n(T_{4k+2}^5), \dots, P_n(T_{4k+2}^{4k+1})\}.\]
		Since $P_n(T_{4k+2}^1)$ (resp.\ $P_n(T_{4k+2}^{4k+1})$) is the plumbing space whose plumbing pattern is the Dynkin diagram of $A_{4k+3}$-type (resp.\ $D_{4k+3}$-type), the Milnor fibers of $A_{4k+3}$ and $D_{4k+3}$-types are diffeomorphic to each other. 
		Similarly, the Milnor fibers of $A_8$ and $E_8$-types are diffeomorphic to each other. 
		
		It would be natural to ask whether those diffeomorphic families are exotic families or not as Weinstein manifolds. 
		It is answered in \cite{Choa-Karabas-Lee}.
		\item We note that we considered some restricted cases in Corollary \ref{cor diffeomorphic family}, but by using the same method, one can construct more diffeomorphic families of plumbing spaces whose plumbing patterns are not $T^j_m$.
	\end{enumerate}
\end{remark}

We end the present paper by mentioning another possible application. 
The possible application is to study symplectic automorphisms on $P_n(T)$. 
To be more precise, we note that since $P_n(T)$ is obtained by plumbing multiple copies of $T^*S^n$, $P_n(T)$ has at least $|V(T)|$ many Lagrangian spheres. 
Thus, there exist generalized Dehn twists along them. 

On the base of the Lefschetz fibration which Theorem \ref{thm plumbing along tree} gives, one has matching cycles corresponding to the Lagrangian spheres. 
Then, it is well-known that on the base of the Lefschetz fibration, a Dehn twist along a Lagrangian sphere can be descried a braid move related to the corresponding matching cycle. 
From the well-known fact, one can study the Dehn twists along Lagrangian spheres by using the Lefschetz fibration.
Especially, we expect that this recovers the results of the author's thesis \cite{Lee} which constructs a higher-dimensional stable/unstable Lagrangian laminations.

\bibliographystyle{abbrv}
\bibliography{lefschetz}
\end{document}